\documentclass[12pt,a4paper,reqno]{amsart}

\usepackage{amsmath,amsfonts,amsthm,amssymb,verbatim,enumerate,graphicx}
\usepackage[shortlabels]{enumitem}
\usepackage{transparent}

\usepackage[dvipsnames]{xcolor}
\usepackage{hyperref}

\usepackage[marginratio=1:1,totalwidth=15.75cm,totalheight=22.275cm]{geometry}

\usepackage[alphabetic,nobysame]{amsrefs}

\usepackage{graphicx}
\usepackage[flushmargin]{footmisc}
\usepackage{pdflscape}
\usepackage{marginnote}
\usepackage[all]{xy}
\usepackage{mathrsfs}  

\theoremstyle{plain}
\newtheorem{thm}{Theorem}[section]
\newtheorem{lem}[thm]{Lemma}
\newtheorem{prop}[thm]{Proposition}
\newtheorem{cor}[thm]{Corollary}

\numberwithin{equation}{section}

\newcommand{\id}{\operatorname{id}}

\newcommand{\Deriv}{\mathrm{D}}

\newcommand{\BU}{\mathit{BU}}

\newcommand{\diam}{\operatorname{diam}}

\renewcommand{\subset}{\subseteq}
\renewcommand{\supset}{\supseteq}
\renewcommand{\phi}{\varphi}

\theoremstyle{definition}
\newtheorem{dfn}[thm]{Definition}
\newtheorem{ex}[thm]{Example}

\theoremstyle{remark}
\newtheorem{rmk}[thm]{Remark}

\newcommand{\eps}{\varepsilon}
\newcommand{\interior}{\operatorname{int}}

\newcommand{\DD}{\mathbb{D}}

\newcommand{\dist}{\operatorname{dist}}

\newcommand{\calC}{\mathcal{C}}

\newcommand \C{\mathbb{C}}

\newcommand \Ch{\widehat{\mathbb{C}}}

\newcommand \N{\mathbb{N}}
\newcommand \R{\mathbb{R}}

\newcommand*{\defeq}{\mathrel{\vcenter{\baselineskip0.5ex \lineskiplimit0pt
                     \hbox{\scriptsize.}\hbox{\scriptsize.}}}%
                     =}

\newcommand{\CP}{\operatorname{CP}}
\newcommand{\U}{\mathcal{U}}
\newcommand{\T}{\mathcal{T}}

\usepackage{orcidlink}

\begin{document}

\title[Wandering dynamics]{Wandering dynamics\\ of transcendental functions}
\author[{V. Evdoridou \and D. Mart\'i-Pete \and L. Rempe}]{Vasiliki Evdoridou\,\orcidlink{0000-0002-5409-2663} \and David Mart\'i-Pete\,\orcidlink{0000-0002-0541-8364} \and Lasse Rempe\,\orcidlink{0000-0001-8032-8580}}

\address{School of Mathematics and Statistics\\ The Open University\\ Walton Hall\\Milton Keynes MK7 6AA\\ United Kingdom
%\textsc{\newline \indent \href{https://orcid.org/0000-0002-5409-2663}%{\includegraphics[width=1em,height=1em]{orcid2.png} {\normalfont https://orcid.org/0000-0002-5409-2663}}}%
}
\email{vasiliki.evdoridou@open.ac.uk}

\address{Department of Mathematical Sciences\\ University of Liverpool\\ Liverpool L69 7ZL\\ United Kingdom
%\textsc{\newline \indent \href{https://orcid.org/0000-0002-0541-8364}{\includegraphics[width=1em,height=1em]{orcid2.png} {\normalfont https://orcid.org/0000-0002-0541-8364}}}%
} 
\email{david.marti-pete@liverpool.ac.uk}

\address{Department of Mathematics\\ University of Manchester\\ Manchester M13 9PL\\ United Kingdom
%\textsc{\newline \indent \href{https://orcid.org/0000-0001-8032-8580}{\includegraphics[width=1em,height=1em]{orcid2.png} {\normalfont https://orcid.org/0000-0001-8032-8580}}}%
} 
\email{lasse.rempe@manchester.ac.uk}

\subjclass[2020]{Primary 37F10; Secondary 30D05, 37B45,  37B55, 37F20.}
\keywords{Complex dynamics, wandering domain, Jordan domain}

\date{\today}

\begin{abstract}
We show that any uniformly escaping and wandering dynamics of a 
 holomorphic function on a compact subset of the plane can  be realised by a transcendental meromorphic function on $\C$.
 
 More precisely, let $\phi$ be a holomorphic function on an open subset of the complex plane,
  and suppose that $K$ is a compact set such that $\phi$ and all its iterates $\phi^n$
  are defined on $K$, and $\phi^n(K)\to\infty$ as $n\to\infty$. We prove that there exist
  a transcendental meromorphic function \smash{$f\colon\C\to\Ch$} 
    and  a compact set \smash{$\widetilde{K}$} such that the dynamics of $f$ on the
  orbit of $\widetilde{K}$ is conjugate, via a smooth change of coordinate close to the identity,
   to that of $\phi$ on the orbit of $K$. If $K$ does not separate the plane, the function $f$ may be chosen to be entire. If all iterates of $\phi$ are univalent on $K$, we can
   take \smash{$\widetilde{K}=K$}.

  We also prove a similar theorem for oscillating dynamics. Finally, we use our 
   results to answer a number of questions of Benini et al. concerning wandering
   domains of entire functions.
\end{abstract}

\maketitle

\section{Introduction}
The study of the iteration of transcendental functions started with the seminal work of Fatou  \cite{fatou26} in 1926. Both Fatou \cite{fatou19} and Julia \cite{julia18} had written extensive memoirs concerning the iteration of rational functions, focusing each on what is now called the Fatou set and the Julia set, respectively. These memoirs became the foundation of the research area known as holomorphic dynamics. When Fatou extended this theory to transcendental entire functions (i.e. holomorphic self-maps of the complex plane for which $\infty$ is an essential singularity), he already discovered some striking differences, including the existence of curves on which points converge quickly to $\infty$ under iteration.

The \textit{Fatou set} $F(f)$ of a transcendental entire or meromorphic function $f$ consists of 
 those points having a neighbourhood on which the iterates $\{f^n\}_{n=0}^{\infty}$ are defined and form a normal family. 
  The \textit{Julia set} $J(f)$ is defined as the complement of $F(f)$ in $\C$; here the behaviour of $f$ under iteration is
  unstable under small perturbations. The Fatou set is open by definition; its connected components are called the \textit{Fatou components} of $f$. The Julia set is either all of $\C$  or it has empty interior. We refer to \cite{bergweiler93} for a survey on the iteration of transcendental meromorphic functions. 

This paper concerns the realisation of wandering dynamics by transcendental functions. Given a transcendental entire or meromorphic function $f$, we say that a compact set $K\subseteq \C$ is a \textit{wandering compact set} of $f$ if $\partial K\subseteq J(f)$ and $f^n(K)\cap f^m(K)=\emptyset $ for all $n\neq m$. If, additionally, the set $K$ is connected, then we say that $K$ is a \textit{wandering continuum}. Note that $J(f)$ always has points with a dense orbit, and these points are examples of (degenerate) wandering continua; in fact, $J(f)$ always contains a wandering Cantor set.

It is an interesting question to ask what other wandering compact sets can arise. \mbox{McMullen} \cite{mcmullen88} showed that for rational maps of the form $f_\lambda(z)=z^2+\lambda/z^3$ with \mbox{$\lambda>0$} sufficiently small, $J(f_\lambda)$ 
contains a Cantor set of wandering Jordan curves. Cheraghi \cite{cheraghi17} 
 proved that the Julia sets of some quadratic polynomials with non-linearisable irrationally indifferent fixed points  contain wandering arcs. To our knowledge, it is not known whether the Julia set of a rational map can contain a non-degenerate
 wandering continuum that is neither an arc nor a Jordan curve. In contrast, it follows from \cite[Theorem~1.6]{arclike} 
 that there exists a transcendental entire function $f$ such that, for every arc-like continuum $X$, the Julia set $J(f)$ 
 contains a wandering continuum that is homeomorphic to $X$. (We refer to~\cite{arclike} for the definition of
 arc-like continua.)

More recently, it was shown in \cite{mrw1,mrw2} that every compact set $K\subset\C$ 
 is a wandering compact set of some transcendental meromorphic function $f$; if $K$ is \textit{full} (that is, $\C\setminus K$ is connected), then $f$ can be chosen to be entire. 
 In these examples, the mapping behaviour of $f$ and its iterates on $K$ is particularly simple. Indeed, $f$ is univalent on some open neighbourhood of $K$, as is every iterate $f^n$ (on a neighbourhood that depends on $n$). The goal of this article is to prove
 a far-reaching generalisation of this result. We show any wandering dynamics of a (not necessarily globally defined) holomorphic
 function can be realised, up to a small smooth change of variable, by a global transcendental entire or meromorphic function,
 provided that the set converges uniformly to $\infty$ under iteration. More precisely, we prove the following theorem. In the following,
   \[ I(f) \defeq \bigl\{z\in\C\colon f^n(z)\to\infty \textup{ as } n\to\infty\bigr\} \]
  denotes the \textit{escaping set} of a transcendental entire or meromorphic function $f$.

\begin{thm}[Realising dynamics by escaping wandering domains] \label{thm:Jordan-continua merged} \label{thm:main-escaping}
Let $(X_n)_{n=0}^\infty$ be a sequence of pairwise disjoint compact sets in $\C$ with $\inf \{|z| \colon z\in X_n\}\to\infty$ as $n\to\infty$ and let
$(C_n)_{n=0}^{\infty}$ be a sequence of finite sets $C_n\subset X_n$. 

Let $\phi$ be a holomorphic function defined on a neighbourhood of $X\defeq\bigcup_{n=0}^\infty X_{n}$ such that $\phi(X_{n-1}) \subset  X_{n}$ and
$\phi(\partial X_{n-1}) \subset \partial X_n$ for all $n\geq 1$. Let $(\varepsilon_n)_{n=0}^\infty$ be a sequence of positive numbers. Then there exist  a transcendental meromorphic function~$f$ and a $\mathcal C^{\infty}$ diffeomorphism $\theta\colon \C\to\C$ such that  
\begin{enumerate}[(a)]
\item $f \circ \theta = \theta \circ \phi$ on $X$;\label{item:functionalrelation}
\item $|\theta(z)-z| \leq \varepsilon_n$ on $X_n$ for all $n\geq 0$;\label{item:conjugacyclose}
\item% 
\label{item:boundaryJulia}%
   $\partial \theta(X)\subseteq J(f)$;
\item $\theta$ satisfies the Cauchy--Riemann equations on $X$; in particular, it is conformal on $\operatorname{int}(X)$;\label{item:conjugacyconformal}
\item $\theta(c)=c$ for $c\in\bigcup_{n=0}^{\infty} C_n$.\label{item:thetaCn}
\end{enumerate}
In particular, $\theta(X)\subseteq I(f)$ and each $\theta(X_n)$ is a wandering compact set of $f$. 
%If $\C\setminus X$ is connected, then $f$ can be chosen to be entire.
\end{thm}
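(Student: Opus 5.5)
We will follow the strategy of \cite{mrw1,mrw2}: first build a quasiregular map $g\colon\C\to\Ch$ that agrees with $\phi$ near $X$ and is holomorphic away from a set of small, controlled dilatation. Then we straighten $g$ using the measurable Riemann mapping theorem, which gives $f=\theta\circ g\circ\theta^{-1}$.

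\textbf{Step 1 (normalisation).} Pass to a subsequence of scales. Choose radii $R_n\to\infty$ and pairwise disjoint, small neighbourhoods $U_n\supset X_n$ on which $\phi$ is defined. The requirement on $U_n$ is that the sets $\phi(U_{n-1})$ and $U_n$ are comparable. We may assume each $U_n$ is a finite union of smoothly bounded domains. We also add to $X$ auxiliary points or discs that will force the Julia set condition~\ref{item:boundaryJulia}. For example, we place small discs near $\partial X_n$ that are mapped onto far-away regions, so that every boundary point of $X$ is accumulated by preimages of poles (or of a repelling cycle).

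\textbf{Step 2 (approximation).} On $\C\setminus\bigcup U_n$ we need a global holomorphic (entire, or meromorphic when $X$ separates the plane) function $h$ with $|h-\phi|$ extremely small on $\bigcup \overline{U_n}$. We get $h$ from Arakelian-type approximation with poles placed in the bounded complementary components, or from the explicit construction of \cite{mrw1,mrw2}. Then we interpolate $g=\phi+\chi_n(h-\phi)$ in the annular collars $U_n\setminus V_n$ around $X_n$, where $\chi_n$ are smooth cutoffs, and set $g=h$ outside. Since the error $\delta_n$ can be made as small as we like relative to $\operatorname{dist}(X_n,\partial U_n)$, the dilatation of $g$ is supported in the collars and satisfies $\|\mu_g\|\le\delta_n$ there. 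In particular it is zero on a neighbourhood of $X$. The same approximation should also fix the finite sets $C_n$: we impose $h=\phi$ to first order at $C_n$, or we post-correct as in the last step.

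\textbf{Step 3 (straightening and conjugacy).} Pull back the dilatation along orbits to obtain a $g$-invariant Beltrami coefficient $\mu$. A point visits the collar of $X_n$ at most once, because $g$ maps the collars of $X_{n-1}$ close to those of $X_n$ and the orbits escape. Hence $\mu$ has total "area" bounded by $\sum\delta_n\cdot\mathrm{area}$, summable with arbitrarily small weights. Solving $\bar\partial\theta=\mu\,\partial\theta$ gives $f=\theta\circ g\circ\theta^{-1}$ meromorphic. Teichmüller--Belinskii type estimates (small integral of $|\mu|/|z|^2$) give $|\theta(z)-z|\le\eps_n$ on $X_n$, and by construction $\mu=0$ on $X$, so $\theta$ is conformal there, which is~\ref{item:conjugacyconformal}. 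To get~\ref{item:thetaCn}, we allow in Step 2 a finite-dimensional family of perturbations and use a fixed-point (degree) argument so that $\theta(c)=c$. Smoothness of $\theta$ can be arranged by making $\mu$ smooth. Property~\ref{item:functionalrelation} holds because $g=\phi$ on $X$.

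\textbf{Main obstacle.} The hardest part is making Steps 2 and 3 simultaneously compatible. Approximating $\phi$ on infinitely many compacta with errors small enough to control both the dilatation and the Julia-set requirement~\ref{item:boundaryJulia} is delicate, because $\partial\theta(X)\subset J(f)$ needs non-normality at every boundary point. Our first attempt would be the following. We make $h$ have huge derivative on the complementary regions adjacent to $\partial X_n$ (for example, discs mapped onto neighbourhoods of a pole). Every point of $\partial X_n$ is then a limit of points whose orbits leave the escaping pattern, while $\phi$-orbits of $X$ escape. By Montel's theorem, normality fails on $\partial\theta(X)$.
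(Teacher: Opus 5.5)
There is a genuine gap in Step~3, and it is structural rather than technical. To apply the measurable Riemann mapping theorem to the pulled-back coefficient, the dilatation accumulated along \emph{every} orbit must stay bounded. Your only mechanism for this is that collars of $X_{n-1}$ are carried near $X_n$ and these orbits escape, which amounts to $g(U_{n-1})\subset U_n$ for all $n$. But then $g^k\to\infty$ uniformly on the open set $U_0\supset X_0$. After straightening, $\theta(U_0)\subset F(f)$, so $\theta(\partial X_0)\cap J(f)=\emptyset$. This contradicts (c), which you are simultaneously trying to build in.

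Such neighbourhoods need not exist in any case. For $X_n=\overline{D(4n,1)}$ and $\phi(z)=(z-4n)^2+4(n+1)$ near $X_n$ one gets $\phi^n(z)=z^{2^n}+4n$, so no $U_n\subset D(4n,2)$ satisfy $\phi(U_{n-1})\subset U_n$. Hence points arbitrarily close to $\partial X_n$ must leave $\bigcup_m U_m$. After that they are moved by the transcendental part $h$, and nothing in your construction prevents their orbits from re-entering the collars, possibly infinitely often. So there is no bound on the composed dilatation.

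The estimate ``$\sum\delta_n\cdot\mathrm{area}$'' does not address this. Pull-back under holomorphic maps preserves $\lvert\mu\rvert$ pointwise, so $\lvert\mu\rvert$ is as large on the whole backward orbit of the $n$-th collar (typically of infinite area) as on the collar itself. What the measurable Riemann mapping theorem needs is $\|\mu\|_\infty<1$.

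Even granting a straightening, the remaining conclusions do not follow from your argument:
\begin{itemize}
\item Teichm\"uller--Belinskii-type estimates give at best $\theta(z)=z+o(\lvert z\rvert)$ at $\infty$, not $\lvert\theta-\id\rvert\le\eps_n$ on $X_n$ for an arbitrary prescribed sequence $(\eps_n)$.
\item A finite-dimensional degree argument cannot by itself enforce $\theta(c)=c$ at the infinitely many points of $\bigcup_n C_n$.
\item Nothing makes $\mu$ vanish on a neighbourhood of $\partial X$; its support accumulates on $J(f)$. So you obtain neither the Cauchy--Riemann equations on $\partial X$ nor smoothness of $\theta$ there.
\end{itemize}

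The missing idea is to avoid any invariant Beltrami coefficient, as the paper does. There $f$ is a limit of rational maps given by Runge's theorem, which agree with $\phi$ in value and local degree at the points of $C_n$. The key tool is Lemma~\ref{lem:theta-map}: a holomorphic map uniformly close to $\phi$ that agrees with it, with the same local degrees, on a finite set containing the critical points satisfies $f\circ\theta=\phi$ for a conformal $\theta$ close to the identity with $\theta=\id$ on that set. Applied to finite compositions, this lets the tolerances be chosen inductively (Theorem~\ref{thm:triangle1}) so that the successive corrections converge in $\calC^m$. At each stage only a finite chain of neighbourhoods of $X_0,\dots,X_{j-1}$ is used, so no forward invariance is ever required. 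Property (c) comes from compact sets $Q_j$ near $\partial X_j$ that are sent into an invariant disc and pulled back via Lemma~\ref{lem:preimages-points} and Theorem~\ref{thm:triangleaddendum}.
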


We can also
 realise the dynamics of $\phi$ within the \emph{bungee set} of a transcendental entire or meromorphic function $f$,
  \[ \BU(f) \defeq \bigl\{z\in\C\colon \liminf_{n\to\infty}\lvert f^n(z)\rvert < \infty =
                                   \limsup_{n\to\infty}\lvert f^n(z)\rvert\bigr\}. \]

\begin{thm}[Realising dynamics by oscillating wandering domains]  \label{thm:Jordan-continua merged-oscillating}
Let $(X_n)_{n=0}^\infty$ be a sequence of pairwise disjoint compact sets in $\C$ with $\inf \{|z| \colon z\in X_{n}\}\to\infty$ as $n\to\infty$ and let
$(C_n)_{n=0}^{\infty}$ be a sequence of finite sets $C_n\subset X_n$.

Let $\phi$ be a holomorphic function defined on a neighbourhood of $X\defeq\bigcup_{n=0}^\infty X_{n}$ such that $\phi(X_{n-1}) \subset  X_{n}$ and
$\phi(\partial X_{n-1}) \subset \partial X_{n}$ for all $n\geq 1$. Let $(\varepsilon_n)_{n=0}^\infty$ be a sequence of positive numbers. Then there exist  a transcendental meromorphic function~$f$,  a $\mathcal C^{\infty}$ diffeomorphism $\theta\colon \C\to\C$ and a sequence $(k_n)_{n=0}^{\infty}$ such that  
\begin{enumerate}[(a)]
%\item $\theta(X) = \widetilde{X}$;
\item $f^{k_n} \circ \theta = \theta \circ \phi$ on $X_n$ for all $n\geq 0$;
%\item \lr{the sets \dmp{$f(\theta(X_n))$} are uniformly bounded, i.e.\ $\sup_{n\geq 0} \max_{z\in \theta(X_n)} \lvert f(z)\rvert < \infty$;}
\item $|\theta(z)-z| \leq \varepsilon_n$ on $X_n$ for all $n\geq 0$;
\item %\dmp{every connected component of $\theta(X)$ is a connected component of $J(f)$;}%
$\partial \theta(X)\subseteq J(f)$;
\item $\theta$ satisfies the Cauchy--Riemann equations on $X$; in particular, it is conformal on $\operatorname{int}(X)$;
\item $\theta(c)=c$ for $c\in\bigcup_{n=0}^{\infty} C_n$;\item the set $f(\theta(X))$ is bounded.
\end{enumerate}
In particular, $\theta(X)\subseteq BU(f)$ and each $\theta(X_n)$ is a wandering compact set of $f$.     \end{thm}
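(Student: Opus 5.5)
We will deduce Theorem~\ref{thm:Jordan-continua merged-oscillating} from the same approximation machinery as Theorem~\ref{thm:main-escaping}, inserting a ``return trip'' through a fixed bounded region between consecutive sets. Fix a closed disc $D_0$ (say $\overline{D(0,1)}$) disjoint from $X$; after discarding finitely many $X_n$ and relabelling, we may assume every $X_n$ lies outside $D(0,R_0)$ for some large $R_0$. For each $n$ we choose a small closed disc $B_n\subset D_0$, with the $B_n$ pairwise disjoint and accumulating only on a point of $D_0$. The idea is that $f$ sends $\theta(X_n)$ univalently (up to the given dynamics) into $B_n$, and then $B_n$ is carried by a further map into $X_{n+1}$. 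Concretely, we set up a new system: the compact sets are $X_n$ and $Y_n\defeq A_n(X_{n+1})$, where $A_n$ is a small affine map with $A_n(X_{n+1})\subset B_n$. We define $\psi\defeq A_n\circ\phi$ on $X_n$ and $\psi\defeq A_n^{-1}$ on $Y_n$. Then $\psi$ is holomorphic near $X\cup\bigcup Y_n$, maps boundaries to boundaries, and $\psi^2=\phi$ on $X_n$. This gives $k_n=2$, and (f) holds automatically because $f(\theta(X_n))\subset\theta(Y_n)\subset D_0$.

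The difficulty is that the $Y_n$ accumulate in a bounded region, so the hypothesis $\inf|Y_n|\to\infty$ of Theorem~\ref{thm:main-escaping} fails. We therefore cannot quote that theorem; we must instead rerun its proof. That proof presumably consists of: (1) building a quasiregular model $g$ equal to $\phi$ near $X$, glued via a smooth interpolation to a suitable transcendental map elsewhere; (2) applying the measurable Riemann mapping theorem with an integrability/small-dilatation condition (the dilatation being supported on annuli far out), giving $\theta$ close to the identity and satisfying Cauchy--Riemann on $X$; (3) normalising $\theta(c)=c$ on $C_n$ by a small smooth correction supported away from $X$. We modify step (1) in two ways. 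Near each $Y_n$ the model is the affine map $A_n^{-1}$. The pieces near the $Y_n$ must also be approximated, and since $\bigcup Y_n\cup\{\text{limit point}\}$ is compact, we use Arakelyan/Runge approximation on the compact set $\overline{\bigcup_n Y_n}$ together with the unbounded pieces near the $X_n$.

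The main obstacle is this accumulation of the $Y_n$. Near the limit point $p$ the maps $A_n^{-1}$ have unbounded derivative, so no single holomorphic function can approximate them uniformly in the required sense on a neighbourhood of the closure. My first attempt would be to choose the $B_n$ to have radii decreasing extremely fast, and to require approximation errors $\varepsilon'_n$ so small relative to $\operatorname{dist}(B_n,B_m)$ that the errors on $Y_n$ are absorbed into the $\eps_{n+1}$-tolerance after multiplying by $|A_n^{-1}{}'|$. The function is then built by an inductive (Mittag-Leffler-type) sum of corrections $h_n$, each nearly zero on the previously treated sets, with the limit point $p$ placed in the Julia set. If this fails, the fallback is to let the $B_n$ escape slowly along a bounded-liminf route. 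For example, route $\theta(X_n)$ via $f$ to a set near a fixed bounded region, but allow the intermediate sets to be spread through an unbounded yet non-escaping pattern, so that the hypotheses of Theorem~\ref{thm:main-escaping}'s construction hold for a suitably reindexed chain; this keeps (f) only if the first step lands in $D_0$, which we impose directly. Finally, $\partial\theta(X)\subset J(f)$ follows, as in Theorem~\ref{thm:main-escaping}, from a blow-up argument near boundary points. Together with $\liminf|f^m|<\infty=\limsup|f^m|$ on $\theta(X)$, this gives $\theta(X)\subset BU(f)$ and the wandering property.
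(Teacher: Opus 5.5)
There is a genuine gap, at exactly the step you flag as ``the main obstacle'', and neither of your remedies closes it.

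With $k_n=2$ and $f(\theta(X))$ bounded, every accumulation point $p$ of the sets $f(\theta(X_n))=\theta(Y_n)$ must be a pole of $f$: if $z_n\in\theta(Y_n)$ and $z_n\to p$, then $f(z_n)\in\theta(X_{n+1})\to\infty$. Near a pole, $f$ is a fixed principal part plus a function holomorphic at $p$. So on the tiny discs $B_n\to p$ it is determined, up to errors tending to zero, by finitely many Laurent coefficients. Already the argument of $f'$ on $B_n$ is dictated by the position of $B_n$ relative to $p$. Hence $f$ cannot approximate independently prescribed affine maps $A_n^{-1}$ on the $Y_n$ to the arbitrary precision the conjugacy argument demands. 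Put differently, an approximation scheme whose exhaustion eventually contains a neighbourhood of $p$ can only approximate models that are meromorphic at $p$, which your piecewise-affine $\psi$ is not. A scheme that avoids $p$ yields at best a function on $\C\setminus\{p\}$, with no control of the singularity at $p$. Fast-shrinking $B_n$ and Mittag-Leffler-type corrections do not touch this, and the ``fallback'' is too vague to check.

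Two further steps are unsound. First, $\theta(c)=c$ cannot be arranged afterwards by a correction supported away from $X$: $C_n\subset X_n$, and $\theta$ is pinned down on $X$ by the functional equation. It must come from the construction; in the paper this is Runge interpolation, with equal local degrees, at the forward orbits of the $C_m$, fed into Lemmas~\ref{lem:theta-map} and~\ref{lem:theta-n}. Second, $\partial\theta(X)\subset J(f)$ does not follow from an unspecified blow-up argument. You must build in, near every boundary point, points whose orbits behave differently from those in $\theta(X_n)$. Also, Theorem~\ref{thm:main-escaping} is not proved by quasiconformal surgery, so ``rerunning'' your guessed proof is not available.

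The missing idea is never to prescribe the intermediate sets in the bounded region, but to obtain them by pulling back under the function already constructed. The paper uses the same Runge-plus-triangle scheme as for Theorem~\ref{thm:main-escaping}, applying Theorem~\ref{thm:triangle1} to the triangle $f_{n,j}\defeq f_j^{k_n}|_{U_n}$ with $k_n=n+2$.

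It fixes discs $D_n\to\infty$ with subdiscs $\Sigma_n,\Omega_n\subset D_n$. On these, all later $f_j$ stay close to affine maps $\sigma_n,\omega_n$ with $\overline{D_{n+1}}\subset\sigma_n(\Sigma_n)$ and $\overline{W_{n+1}}\subset\omega_n(\Omega_n)$. By Lemma~\ref{lem:univ-approx} this gives a branch $\beta$ of $f_{j-1}^{-(j-1)}$ from $D_{j-1}$ into $D_0$. At step $j$ one approximates the following model:
\begin{itemize}
\item[$\bullet$] $g_j\defeq\beta\circ\omega_{j-1}^{-1}\circ\phi_{j-1}$ on $\overline{U_{j-1}}$;
\item[$\bullet$] $f_{j-1}$ on $\Lambda_{j-1}$;
\item[$\bullet$] $\omega_{j-1}$ and $\sigma_{j-1}$ on the new subdiscs;
\item[$\bullet$] a constant on $R_{j-1}$.
\end{itemize}
Then $g_j^{k_{j-1}}=\phi_{j-1}$ on $\overline{U_{j-1}}$.

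All new data are prescribed far out. The infinitely many small pieces of $D_0$ receiving the sets $\theta(X_n)$ are produced by the dynamics itself. They only require $f_j$ to stay close to $f_{j-1}$ on $\Lambda_{j-1}$, which Lemma~\ref{lem:approx-iterates} converts into the hypotheses of Theorem~\ref{thm:triangle1}. Then (f) holds because $f(\theta(X_n))\subset D_0$. Property (c) comes from Theorem~\ref{thm:triangleaddendum}: the sets $Q_j$, within $\hat{\delta}_j$ of $\partial X_j$, are mapped into the invariant disc $D$. So points with bounded orbits accumulate on $\partial\theta(X_n)$, whose own orbits are unbounded, and normality fails there.

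A constant $k_n=2$ is not excluded by the statement. However, it would require routing through a pole and defining the intermediate sets as preimages under the local inverse branch of the already constructed function there, which is the same pullback idea. The paper's route through escaping discs also works in the entire case, Theorem~\ref{thm:entire}.
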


\begin{rmk}
The key result to prove Theorems~\ref{thm:Jordan-continua merged} and \ref{thm:Jordan-continua merged-oscillating} is Theorem~\ref{thm:triangle1}, which is a more general result about conjugacies
between non-autonomous holomorphic dynamical systems.
\end{rmk}

\begin{rmk}\label{rmk:Cm-intro}
Let $(M_n)_{n=0}^{\infty}$ be a sequence of non-negative integers.
Then part (b) of Theorems \ref{thm:Jordan-continua merged} and \ref{thm:Jordan-continua merged-oscillating} can be strengthened as follows:  $\theta$ is $\mathcal{C}^{M_n}$-close to the identity on a neighbourhood of $X_n$ and $\theta = \operatorname{id}$ elsewhere.
Moreover, $\theta$ agrees with $\id$ up to order $M_n$ on $C_n$. (See Remark~\ref{rmk:Cm}.)
\end{rmk}

In general, the function $f$ will have poles. Indeed, this is unavoidable, for example,
    if 
    the sets $X_n$ separate the plane, but $X_n$ belongs to the unbounded connected component of $\C\setminus X_{n+1}$ for infinitely many $n\in\N$. However, when
    the sets $X_n$ are non-separating, we can construct $f$ to be entire.

\begin{thm}[The entire case]\label{thm:entire}
 In Theorems~\ref{thm:Jordan-continua merged} and~\ref{thm:Jordan-continua merged-oscillating},
   if $\C\setminus X$ is connected, then $f$ can be chosen to be entire. 
\end{thm}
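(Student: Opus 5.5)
The plan is to rerun the construction behind Theorems~\ref{thm:Jordan-continua merged} and~\ref{thm:Jordan-continua merged-oscillating}, replacing the rational approximation with poles (Runge's theorem on compact sets with holes) by polynomial approximation. The hypothesis that $\C\setminus X$ is connected is exactly what makes the polynomial version available.

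First I would recall the skeleton of the meromorphic proof, which rests on Theorem~\ref{thm:triangle1}. One chooses a nested exhaustion by compact sets
\[ K_0\subset K_1\subset\dots, \qquad K_j = \overline{D(0,R_j)}\cup X_0\cup\dots\cup X_{m_j}\cup\bigl(\text{auxiliary sets}\bigr), \]
where the auxiliary sets are used to force $\partial\theta(X)\subseteq J(f)$ and, in the oscillating case, to bring orbits back near a bounded region. The function $f$ is built as a limit $f=\lim f_j$, with each $f_j$ approximating $\phi$ on $X_{m_j}$ (respectively a composite map in the bungee case) and approximating $f_{j-1}$ on $K_{j-1}$. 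The errors are summable, and Theorem~\ref{thm:triangle1} converts approximate conjugacy into an exact conjugacy $\theta$ that is close to the identity, satisfies the Cauchy--Riemann equations on $X$, and fixes the points of $C_n$. In the entire case I keep all of this unchanged and only check that every approximation step can use polynomials. This requires each compact set $K_j$ to which Runge's theorem is applied to have connected complement.

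Second, I would verify fullness. Since $\C\setminus X$ is connected, every $X_n$ is full: a bounded component of $\C\setminus X_n$ is open and contains no point of $X_n$. It also contains no point of $X_m$ for any $m\neq n$, because otherwise it would separate part of $X$ from $\infty$ in $\C\setminus X$, contradicting connectedness. Choose the radii $R_j$ so that the circle $\lvert z\rvert=R_j$ avoids $X$; this is possible because $\inf\{\lvert z\rvert : z\in X_n\}\to\infty$. Then $\overline{D(0,R_j)}$ together with finitely many sets $X_n$ lying outside it is full, since each of those $X_n$ is full and they are pairwise disjoint and disjoint from the disc.

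The auxiliary sets should be chosen as closed discs or full continua placed in the complement. These are disjoint from everything else, and a finite union of disjoint full compact sets is full. A continuum such as a curve tending toward a point would break this, so I would avoid it. Every $K_j$ is then full, and Runge's theorem gives a polynomial approximant at each step.

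Third, I would confirm that the limit $f$ is transcendental entire rather than a polynomial. Each $f_j$ is entire, so uniform convergence on the exhaustion gives an entire limit. The required properties of $f$ force an essential singularity at $\infty$: in the escaping case $\theta(X_n)$ is a wandering compact set, while in the oscillating case $f(\theta(X))$ is bounded although $\theta(X)$ is unbounded. For a polynomial, $\infty$ is superattracting and there are no wandering domains.

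The main obstacle is the step where the meromorphic construction genuinely used poles. This may be the mechanism that puts $\partial\theta(X)$ in $J(f)$ by placing poles in the holes of $X_n$, or the mechanism that creates the bounded return in the oscillating case. I would replace poles in holes, which are absent when $X$ is full, by small auxiliary discs adjacent to the sets $X_n$ on which $f_j$ is prescribed to be a large constant or a map into an escaping region. Normality then fails at every boundary point of $\theta(X_n)$, since points arbitrarily close to such a point escape at different rates. Each such disc must be kept disjoint from $X$ and from the other auxiliary sets so that fullness is preserved, and I expect checking this compatibility with Theorem~\ref{thm:triangle1} to be the delicate point.
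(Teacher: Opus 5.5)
Your overall strategy (rerun the construction and check that every set $A_j$ to which Runge's theorem is applied is full, so that the approximants are polynomials) is exactly the paper's. However, the step you flag as the main obstacle is misdiagnosed, and the remedy you propose does not give conclusion~(c). In the meromorphic proofs the poles play no dynamical role. They arise only because $A_j=\Lambda_{j-1}\cup\overline{U_{j-1}}\cup R_{j-1}$ (together with $\overline{\Omega_{j-1}}\cup\overline{\Sigma_{j-1}}$ in the oscillating case) may have holes. The bounded return in the oscillating case comes from the discs $\Omega_n,\Sigma_n$ with affine $g_j$, not from poles.

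The mechanism giving $\partial\theta(X)\subseteq J(f)$ is that $g_j\equiv c$ on $R_{j-1}$, where $c$ is the centre of a disc $D$ with $f(D)\subset D$. Hence points of $Q_n$ have \emph{bounded} orbits. Their iterated preimages accumulate on $\partial\theta(X)$ by Theorem~\ref{thm:triangleaddendum}, while $\theta(X)\subset I(f)$. Your replacement does not work. You send auxiliary discs to a large constant or into an escaping region and appeal to ``different escape rates''. But if all points near $z_0$ escape, every limit function of $(f^n)$ there is $\equiv\infty$, and different escape rates do not by themselves contradict normality. The fix is simply to keep the original mechanism: take $Q_{j-1}$ finite and $R_{j-1}$ a finite union of small disjoint closed round discs in $W_{j-1}\setminus X_{j-1}$.

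Your fullness argument for the exhaustion also contains a false step. A circle $\lvert z\rvert=R$ disjoint from $X$ need not exist. Take $X_n\defeq\{re^{in}\colon n\le r\le n+2\}$, with $\phi(z)=e^{i}z+e^{in}$ near $X_{n-1}$. This satisfies all hypotheses with $\C\setminus X$ connected, yet every circle centred at $0$ meets $X$. Moreover, once a round disc cuts through some $X_n$ (say a horseshoe-shaped one), $\overline{D(0,R)}\cup X_n$ can have holes. What you need is Proposition~\ref{prop:topology}: a separation argument gives closed Jordan domains $\Lambda_n$ exhausting $\C$ with $W_m\subset\Lambda_n$ for $m<n$ and $\overline{W_m}\cap\Lambda_n=\emptyset$ for $m\geq n$.

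Finally, you approximate on the sets $X_n$ themselves, but Theorem~\ref{thm:triangle1} requires closeness on open neighbourhoods $U_n$. So $\overline{W_n}$ and $\overline{U_n}$ must themselves be chosen full, using that a full compact set has arbitrarily small full closed neighbourhoods \cite[Lemma~2.9]{mrw1}. With these choices, each $A_j$ is a finite union of pairwise disjoint full compact sets, hence full, and the polynomial case of Theorem~\ref{thm:runge} applies at every step.
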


\begin{ex} \label{ex:rabbit}
Let $K$ be the \emph{Douady rabbit}, which is the filled-in Julia set of $P_c(z)=z^2+c$ with $c\simeq -0.12+0.74i$. This quadratic polynomial has an attracting cycle of period three: $0\mapsto c\mapsto c^2+c\mapsto 0$. For $n\geq 0$, consider 
 the translates $X_n\defeq K+t\cdot n$ of $K$, where $t>0$. If $t$ is sufficiently large, then the $X_n$ are pairwise disjoint. In fact, it is elementary to check that
  $K$ is contained in the closed disc of radius $2$ around the origin, so we may take $t=5$; numerically, one can check that
  $t=3$ also suffices. We can define several model maps $\varphi$ on $X=\bigcup_{n=0}^\infty X_n$ that exhibit different
  dynamics. If we set $\varphi(z)=z+t$ on $X$, then $\phi$ and all its iterates are univalent on $K$. On the other hand, if we define $\varphi_{\vert X_n}(z)=(z-3n)^2+c+3(n+1)$ for $n\geq 0$, then we obtain a map which, up to translation, has the same dynamics as $P_c$ (see Figure~\ref{fig:rabbit}).
\end{ex}

\begin{rmk}
In Theorems~\ref{thm:main-escaping} and~\ref{thm:Jordan-continua merged-oscillating}, 
part~\ref{item:boundaryJulia} can be strengthened as follows:
every connected component of $\theta(\partial X)$ is a connected component of $J(f)$. (The function we construct to achieve this will have poles even if $\C\setminus X$ is connected.)
\end{rmk}

\begin{figure} \label{fig:rabbit}
\def\svgwidth{\textwidth}
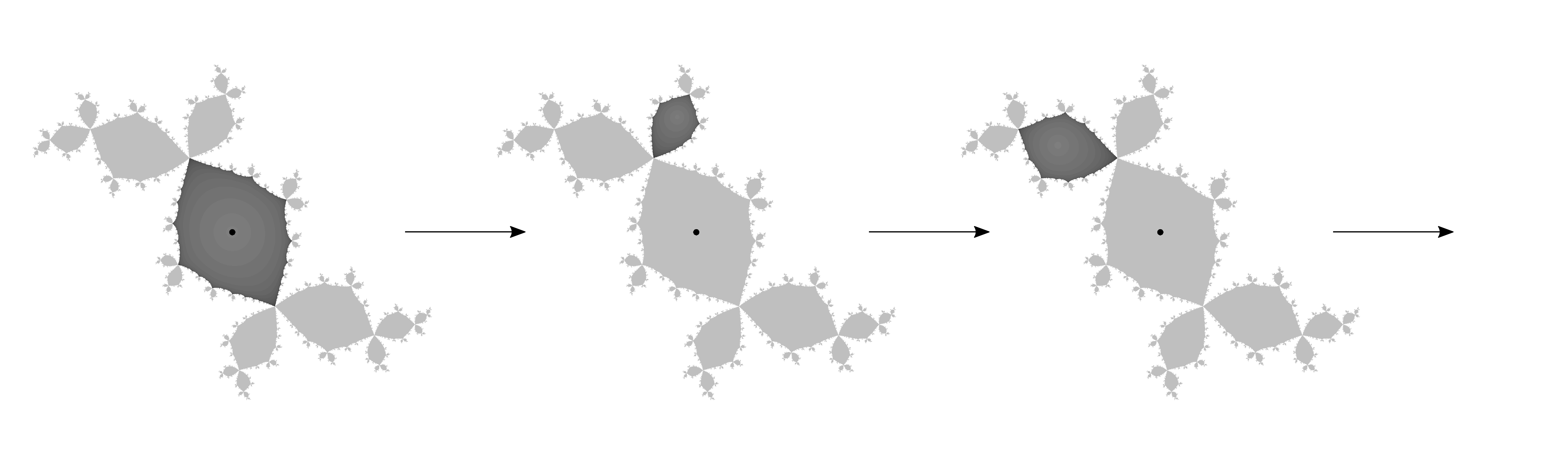
\caption{The continua $(X_n)_{n=0}^\infty$ and model map $\varphi$ from Example~\ref{ex:rabbit}. Here $K$ is Douady's rabbit (in light gray), and the domain~$U$ (in dark gray) is one of the components of $\textup{int}(K)$. Note that $\varphi^{n+3}(U)=\varphi^n(U)+9$ for all $n\geq 0$.}
\end{figure}

In our theorems, the set $\theta(X_0)$ will
 differ from $X_0$ (although they are related by
 a diffeomorphism close to the identity). This appears to be
 difficult to avoid in general. However, when $\varphi$ is injective, we show
 that we can realise $X_0$ exactly, recovering  the above-mentioned previous
 results~\cite[Theorem~1]{bocthaler21}, \cite[Theorem~1.7]{mrw1} and~\cite[Theorem~1.3]{mrw2} as special cases.

\begin{thm}[Realising univalent dynamics]\label{thm:injective}
Under the hypotheses of Theorems \ref{thm:main-escaping}, \ref{thm:Jordan-continua merged-oscillating} or~\ref{thm:entire}, suppose additionally that 
$\varphi$ is univalent on a neighbourhood of $X$ and that $\phi(X_n)=X_{n+1}$
for all $n\geq 0$. Then 
we can choose $\theta$ so that $\theta = \id$ on $X_0$. 
\end{thm}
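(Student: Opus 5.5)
Since $\varphi$ is univalent near $X$ and $\varphi(X_n)=X_{n+1}$, the whole orbit is determined by $X_0$: for each $n$, $\varphi^n\colon X_0\to X_n$ extends to a conformal isomorphism between neighbourhoods of $X_0$ and $X_n$. The idea is to first apply Theorem~\ref{thm:main-escaping} (respectively Theorem~\ref{thm:Jordan-continua merged-oscillating} or~\ref{thm:entire}), obtaining $f$ and $\theta$ with $\theta$ close to the identity and conformal near $X$. We then correct $\theta$ on $X_0$ by pulling back through $f$ instead of pushing forward. Concretely, we will build a new conjugacy $\tilde\theta$ with $\tilde\theta=\id$ on $X_0$. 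To make it a conjugacy, we define $\tilde\theta$ on $X_n$ as the composition
\[ \tilde\theta|_{X_n} = \theta\circ\varphi^n\circ\theta^{-1}\circ\ldots \]
This does not directly work, so instead we use a precomposition trick. Set $\psi\defeq\theta|_{X_0}$, which is close to the identity and conformal on a neighbourhood of $X_0$. We replace the data $(X_n,\varphi)$ by a modified sequence $X_0'\defeq\psi^{-1}(X_0)$-type sets and solve a fixed-point problem.

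A cleaner plan is to go through the more general non-autonomous result, Theorem~\ref{thm:triangle1}, which is said to be the key result. The univalence of $\varphi$ lets us transfer the correction entirely to later sets, which is possible because of one fact: if $\theta$ conjugates $\varphi$ to $f$ on $X$, then for any map $\eta$ conformal near $X_0$ and close to the identity, the maps
\[ \eta_n\defeq \varphi^n\circ\eta\circ\varphi^{-n} \]
are defined near $X_n$ and conformal. With $\eta=\theta|_{X_0}^{-1}$, the compositions $\theta\circ\eta_n$ give a new conjugacy that equals $\id$ on $X_0$. The issue is that $\eta_n$ need not be close to the identity, and need not even extend to a global diffeomorphism, since $\varphi^n$ may distort strongly.

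To fix this, we apply the main theorem with the \emph{closeness applied only on $X_0$}, choosing $\varepsilon_0$ very small. Then $\eta$ is extremely close to $\id$ on a fixed neighbourhood $U_0$ of $X_0$ (which is $\mathcal C^1$-close as well, by Remark~\ref{rmk:Cm-intro} with $M_0\ge1$). Since $\varphi^n$ is univalent on a neighbourhood of $X_0$, the maps $\eta_n$ are defined on neighbourhoods of $X_n$. However, their deviation from the identity grows with $n$, so a single choice of $\varepsilon_0$ cannot control all $n$.

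So the actual plan is an inductive or iterative construction. We absorb the correction into the construction of $f$ itself: in the proof of Theorem~\ref{thm:triangle1} the conjugacy on $X_0$ arises from a limit of corrections made at all later stages. In the univalent case we can instead normalise so that the map on $X_0$ is the identity and push each correction forward by the conformal maps $\varphi^{n}$. Pushing forward is harmless because the $\varphi^n$ are univalent. Finally, $\tilde\theta$ is extended smoothly off $\bigcup X_n$ using disjointness and a partition of unity, in the same way as $\theta$ was built.

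The main obstacle is ensuring that the extended map remains a global diffeomorphism and that $f$ can be rebuilt so that $f\circ\tilde\theta=\tilde\theta\circ\varphi$ holds. I would first try to use Theorem~\ref{thm:triangle1} directly with a target sequence of maps $\varphi_n=\varphi|_{X_n}$ and the requirement that the conjugacy on the first set is trivial, verifying that its proof allows fixing the initial coordinate when all maps are univalent.
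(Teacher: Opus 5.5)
Your final plan does not resolve the obstruction you yourself identified. You correctly note that pushed-forward corrections $\varphi^n\circ\eta\circ\varphi^{-n}$ need not stay close to the identity. An error committed near $X_k$ is in general magnified on $X_n$ by the derivatives of the later iterates of $\varphi$, for every $n>k$, so no tolerance fixed at step $k$ controls all $n$ simultaneously. The ``actual plan'' then asserts that pushing forward is harmless because the $\varphi^n$ are univalent. But univalence only makes the pushed-forward maps defined and conformal, not close to the identity, so the plan fails for exactly the reason you gave.

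Nor can Theorem~\ref{thm:triangle1} simply be checked to allow $\Theta_0=\id$. Its proof produces the conjugacies by pulling back from the newest set via Lemma~\ref{lem:theta-n}, which inevitably moves $X_0$. An a posteriori change of $\theta$ with $f$ fixed is also hopeless, since $\theta(X_0)$ is pinned down by $f$ through $\partial\theta(X_0)\subset J(f)$. By contrast, the global-diffeomorphism issue you single out as the main obstacle is routine via Corollary~\ref{cor:Cmextension}.

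The missing idea is to change the \emph{target} of the approximation at each new stage. With $\Theta_{0,j}=\id$, the conjugacy on $X_n$ at stage $j$ is forced to be the push-forward $f_{n-1,j}\circ\dots\circ f_{0,j}\circ\phi_0^{-1}\circ\dots\circ\phi_{n-1}^{-1}$. This pushes forward through the approximating maps, which are univalent near the relevant sets by Lemma~\ref{lem:univ-approx}, not through $\varphi^n$.

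Let $\theta_{j-1}\defeq f_{j-2,j-1}\circ\dots\circ f_{0,j-1}\circ\phi_0^{-1}\circ\dots\circ\phi_{j-2}^{-1}$ be the current conjugacy near $X_{j-1}$. Choose $U_{j-1}$ inside the domain of $\theta_{j-1}^{-1}$, and set $g_j\defeq\phi_{j-1}\circ\theta_{j-1}^{-1}$ on $\overline{U_{j-1}}$. Then $f_{j-1,j}$ approximates $\phi_{j-1}\circ\theta_{j-1}^{-1}$ rather than $\phi_{j-1}$. This cancels the accumulated error, so the newly created conjugacy on $X_j$ is close to the identity. Meanwhile, the conjugacies on $X_1,\dots,X_{j-1}$ change only by the small perturbations $f_{n,j}-f_{n,j-1}$, transported through finitely many already-known univalent maps, and $\delta_j$ can be chosen to control these.

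This is the content of Theorem~\ref{thm:injectivetriangle}: hypothesis (c) replaces~\eqref{eqn:phclose}, and the maps are updated by $\theta_{n,j}=f_{n-1,j+1}\circ\theta_{n-1,j}\circ f_{n-1,j}^{-1}$. The proof of Theorem~\ref{thm:injective} is then the earlier construction with only the choice of $U_{j-1}$ and $g_j|_{\overline{U_{j-1}}}$ modified.
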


\subsection*{Applications}
Wandering compact sets are closely related to \emph{wandering domains}: Fatou components that are not eventually periodic.
 If $K$ is a wandering compact set of~$f$, then every component of $\textup{int}(K)$ is a wandering domain of~$f$; hence, in Theorem~\ref{thm:main-escaping}, every interior component of $\theta(X)$ is a wandering domain of~$f$. Note that the converse is not necessarily true: if $U$ is a wandering domain of~$f$, then $\overline{U}$ may not be a wandering continuum of~$f$ in the
 sense of our definition. Indeed, $f(z)=z+\sin(z)+2\pi$ has wandering domains $U$ along the real line which satisfy that $\overline{U}\cap f(\overline{U})\neq\emptyset$. In this example, the closure of each wandering domain is a wandering continuum for $f^2$; however, in general it is not known, for example, whether the boundary of a wandering domain can contain a fixed point of $f$.

Wandering domains can be classified into \textit{escaping}, \textit{oscillating} or \textit{bounded-orbit} according to whether points in the domain belong to
$I(f)$, to $\BU(f)$, or have bounded orbits, respectively. Whether bounded-orbit wandering domains exist is a well-known open problem~\cite[Problem~2.67]{hayman-lingham19}.
In Theorem~\ref{thm:Jordan-continua merged}, any interior component of $X$ is
an escaping wandering domain, while in Theorem~\ref{thm:Jordan-continua merged-oscillating},
they are oscillating wandering domains.

We now describe how to use our theorems to answer a number of open questions
 concerning the possible behaviour of entire functions on simply connected 
 wandering domains. Benini et al.~\cite{benini-fagella-evdoridou-rippon-stallard} 
 proposed a classification of the \emph{internal dynamics} of such wandering 
 domains. On the one hand, they describe this dynamics as 
 \emph{contracting}, \emph{semi-contracting} or \emph{eventually isometric},
 depending on the eventual behaviour of iterates with respect to the
 hyperbolic metric. They also distinguish whether orbits 
 \emph{stay away} from the boundary (with respect to Euclidean distance), 
 \emph{converge} to the boundary, or  display both types of behaviour along different subsequences. (See Section~\ref{sec:topology-wandering-domains} for precise definitions.) These two descriptions give rise to 
 nine possible types of escaping wandering domains, and six possible
 types of oscillating wandering domains (where the iterates cannot stay away from the boundary).

Benini et al. also show that all nine types of escaping wandering domains
can be realised~\cite[Theorem~E]{benini-fagella-evdoridou-rippon-stallard}, while Evdoridou, Rippon and Stallard~\cite[Theorem~1.1]{evdoridou-rippon-stallard} construct
 examples of all six types of oscillating wandering domains. For some of the
 escaping examples, 
 it is shown in~\cite[Theorem~7.2]{benini-fagella-evdoridou-rippon-stallard}
 that the wandering domains can be chosen to be Jordan domains, but for the
 remaining types, and for oscillating wandering domains, it remained open whether
 this is possible.

We may answer this question using our realisation theorems. Indeed, 
 the above-mentioned examples are constructed by approximating 
 appropriate sequences of Blaschke products, and we obtain the following.

 \begin{cor} \label{cor:Jordan}
 For each of the nine (resp.\ six) types of internal dynamics of
   escaping (resp.\ oscillating) internal dynamics, there exists 
   a transcendental entire function having a wandering domain $U$ with this
   type of internal dynamics and such that $\partial U$ is a $\calC^{\infty}$ Jordan curve. In the case of eventually isometric internal dynamics, we may in fact  take 
   $U=\DD$.
\end{cor}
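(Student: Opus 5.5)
The plan is to take the model sequences of Blaschke products from \cite{benini-fagella-evdoridou-rippon-stallard} (escaping case) and \cite{evdoridou-rippon-stallard} (oscillating case). We then realise them with Theorems~\ref{thm:main-escaping}, \ref{thm:Jordan-continua merged-oscillating} and~\ref{thm:entire}, so that the closed unit disc is transported along the orbit. Concretely, for each type there is a sequence of finite Blaschke products $(b_n)$, each of degree at least one, whose non-autonomous composition $B_n\defeq b_n\circ\dots\circ b_1$ on $\DD$ exhibits the required type of internal dynamics. The hyperbolic contraction behaviour (contracting, semi-contracting, eventually isometric) and the distance-to-boundary behaviour (away, converging, or both) are read off from these compositions. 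In those papers the types are determined by the $b_n$ together with a conformal conjugacy to a disc of controlled size.

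First I will set $X_n\defeq \overline{D(a_n,r_n)}$, with centres $a_n\to\infty$ and radii $r_n$ chosen so that the discs are pairwise disjoint; take $r_n=1$, $a_n=10n$. Define $\phi$ on a neighbourhood of $X_{n-1}$ by $\phi(z)=a_n+b_n(z-a_{n-1})$. Each $b_n$ is holomorphic near $\overline{\DD}$ and maps $\partial\DD$ to $\partial\DD$, so $\phi(X_{n-1})\subset X_n$ and $\phi(\partial X_{n-1})\subset\partial X_n$. We may shrink the neighbourhoods so that they are disjoint. Since $\C\setminus X$ is connected, Theorem~\ref{thm:entire} applies. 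Theorem~\ref{thm:main-escaping} (resp.\ Theorem~\ref{thm:Jordan-continua merged-oscillating}) then gives an entire $f$ and a diffeomorphism $\theta$ with $f\circ\theta=\theta\circ\phi$ on $X$ (resp.\ $f^{k_n}\circ\theta=\theta\circ\phi$ on $X_n$), and with $\partial\theta(X)\subset J(f)$.

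Let $U$ be the Fatou component containing $\theta(\interior X_0)$. I must show that $U=\theta(\interior X_0)$, so that $\partial U=\theta(\partial X_0)$ is a $\calC^\infty$ Jordan curve. This holds because $\theta(\partial X_0)\subset J(f)$ is a Jordan curve enclosing the connected set $\theta(\interior X_0)$, which lies in the Fatou set since its orbit lies in $\theta(X)$ and $\theta(X)$ sits in the escaping (resp.\ bungee) set on an open set. More directly, $\interior\theta(X_0)$ is a component of $\interior\theta(X)$, and the paper notes that such components are wandering domains.

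The internal dynamics transfer because $\theta$ is conformal on $\interior X$ by part~(d). Hence $\theta|_{\interior X_n}$ is a Riemann map, and $f^n|_U$ is conformally conjugate to $B_n$, up to the affine normalisation. The hyperbolic classification therefore carries over exactly. For the Euclidean distance to the boundary, $\theta$ is a diffeomorphism $\eps_n$-close to the identity, with derivatives controlled by Remark~\ref{rmk:Cm-intro}. It is bi-Lipschitz on each $X_n$ with constants tending to $1$, so the condition $\dist(f^n(z),\partial f^n(U))\to0$, or its staying bounded below, is preserved. In the oscillating case, composing blocks $f^{k_n}$ corresponds to the model, and the intermediate iterates only pass through the bounded set $f(\theta(X))$. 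I expect this to be the main obstacle: one must check that the definitions in Section~\ref{sec:topology-wandering-domains}, which use all iterates $f^m$, are determined by the subsequence $f^{k_0+\dots+k_n}$. I would handle it by choosing the models as in \cite{evdoridou-rippon-stallard}, where the classification is likewise read off along the return times.

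For eventually isometric types the $b_n$ can be rotations, or M\"obius automorphisms. Then $\phi$ is univalent and $\phi(X_n)=X_{n+1}$, so Theorem~\ref{thm:injective} gives $\theta=\id$ on $X_0$. Placing $a_0=0$ makes $X_0=\overline{\DD}$, and hence $U=\DD$.
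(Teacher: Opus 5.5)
Your escaping case and the claim $U=\DD$ follow the paper's proof. You use translated unit discs with the Blaschke models of \cite{benini-fagella-evdoridou-rippon-stallard}, Theorems~\ref{thm:main-escaping} and~\ref{thm:entire}, conformality of $\theta$ for the hyperbolic classification, $\eps_n\to0$ for the Euclidean one, and Theorem~\ref{thm:injective} when all $b_n$ are M\"obius. Your argument that the Fatou component equals $\theta(\interior X_0)$ is also fine.

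The gap is in the oscillating case, at exactly the step you flagged. Your premise there is false: property~(f) of Theorem~\ref{thm:Jordan-continua merged-oscillating} only controls the \emph{first} iterate after a return time. Write $R_n\defeq k_0+\dots+k_{n-1}$. The theorem says nothing about $f^{R_n+i}(\theta(X_0))$ for $2\leq i<k_n$, and in the actual construction these sets lie in the discs $D_{i-1}$, which tend to $\infty$. Theorem~\ref{thm:classification2} involves all iterates. So realising a type~\ref{item:second-typeC} domain requires $\dist(f^m(z),\partial U_m)\to0$ at these intermediate times as well.

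Neither the theorem nor the model provides this. The intermediate dynamics comes from the auxiliary maps $\sigma_n$, $\omega_n$ in the proof of Theorem~\ref{thm:Jordan-continua merged-oscillating}, not from the $b_n$. So ``choosing the models as in \cite{evdoridou-rippon-stallard}'' does not help. Reading off the type along return times there relies on control of intermediate times in their own construction, and that control does not transfer to $f$. The statement of the theorem does not control the size of the intermediate images. For example, if the discs $D_n$ grow fast, then $f^{k_n-1}(\theta(X_n))\subset\Omega_n$ is a large copy of $\theta(X_{n+1})$. Orbits then need not approach the boundary at those times, even if they do at the return times.

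The missing ingredient is Remark~\ref{rmk:oscillating-shrinking}. You have to enter the construction and choose the $D_n$ with diameters tending to $0$ and $\lvert f'\rvert\geq1$ on $\Sigma_n$, which gives~\eqref{eqn:oscillating-shrinking}. The Fatou component containing $f^k(\theta(\interior X_n))$ lies in $f^k(\theta(X_n))$, so distances to the boundary then tend to $0$ at all intermediate times. Consequently $f$ is of type~\ref{item:second-typeC} exactly when the model converges to the boundary along return times, and of type~\ref{item:second-typeB} otherwise.

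Your bounded-set observation is still useful if restricted to the times $R_n+1$. Combined with the fact that finite limit functions in a wandering domain are constants in $J(f)$, it already gives the type~\ref{item:second-typeB} cases, but not type~\ref{item:second-typeC}. Finally, for the hyperbolic classification in the oscillating case you also need that $\dist_{U_m}$ is non-increasing along orbits (Schwarz--Pick). That is what lets you pass from return times to all times.
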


As far as we are aware, these are the first examples of 
   contracting or semi-contracting wandering domains with smooth boundaries.

We also use our results to answer a number of questions about the boundary 
 behaviour of wandering domains raised in~\cite{benini-fagella-evdoridou-rippon-stallard2}. The 
authors study the boundary behaviour of sequences of holomorphic functions on simply connected domains. In analogy to the well-known notion of the Denjoy--Wolff point, they define the Denjoy--Wolff set on the boundary of such domains; see Definition \ref{DWset}. In the same paper, they construct a variety of examples of sequences of finite Blaschke products with intriguing boundary behaviour. For example,~\cite[Theorem~D]{benini-fagella-evdoridou-rippon-stallard2} shows that it is possible that all interior orbits tend to 1 but all boundary orbits eventually leave an arc around~1. The authors 
raise the question~\cite[Remark~2 after Theorem~D]{benini-fagella-evdoridou-rippon-stallard2} whether such an example can be realised by a wandering domain of an
entire function; we show that this is indeed the case (see Example~\ref{ex:emptyDW}). 
In fact, we show that all the examples constructed in~\cite[Section~8]{benini-fagella-evdoridou-rippon-stallard2} can be realised in this manner, answering the question
posed on \cite[p.~7]{benini-fagella-evdoridou-rippon-stallard2}.

We mention four further applications of our results.
\begin{enumerate}[(a)]
    \item In~\cite{EGM}, Theorem~\ref{thm:entire} is used to construct 
      several examples of entire functions with wandering domains 
      that have interesting properties from the point of view of
      universality of composition operators. 
    \item Our result provides a new proof of \cite[Theorem~B]{evdoridou-fagella-geyer-pardo-simon}, which constructs a wandering domain exhibiting both
     discrete and indiscrete orbit relations.
     \item In~\cite{gustavo-lasse}, Theorem~\ref{thm:main-escaping} is used to construct a new type of multiply connected wandering domain
     for meromorphic functions: a \emph{locally but not globally eventually isometric} wandering domain; we refer to~\cite{gustavo-lasse} for definitions.
     \item In~\cite{mrw3}, our technique is combined with a construction from~\cite{mrw1}
       to answer a question of Bishop~\cite[p.~133]{bishop14} from 2014 (see also \cite[Question~6.47]{bergweiler-rempe-2025}) concerning non-escaping points
       on the boundaries of escaping wandering domains.
\end{enumerate}

\subsection*{Notation} We denote by $D(z,r)$ the disc of centre $z\in\C$ and radius $r>0$; we write $\mathbb{D}$ for the unit disc $D(0,1)$. Throughout the paper, for a set $X\subseteq \C$, the boundary $\partial X$ and closure $\overline{X}$ are taken in $\mathbb{C}$. If $X_1,X_2\subseteq \C$ are closed sets, then $\textup{dist}(X_1,X_2)$ denotes the \textit{Euclidean distance} between them. If $U\subseteq \C$ is a domain and $z,w\in U$, we denote by 
$\operatorname{dist}_U(z,w)$ the \textit{hyperbolic distance} between $z$ and $w$ in $U$. If $U,V\subseteq \C$ are open, then we write $U\Subset V$, 
 and say that $U$ is \textit{compactly contained} in $V$, if $U$ is bounded and $\overline{U}\subseteq V$. For $0\leq n\leq \infty$, the class $\mathcal C^n=\mathcal{C}^n(\R^2)$ consists of the functions $f:\R^2\to\R^2$ that are $n$ times differentiable with continuous $n$th partial derivatives, and can be endowed with the norm
\[\|f\|_{\mathcal C^n}\defeq\sum_{k=0}^n \sup_{(x,y)\in\R^2} \|\Deriv^k f(x,y)\|, \quad \textup{for } 0\leq n<\infty,\]
where $\|\cdot \|$ denotes the operator norm.
%and, in the $\mathcal C^\infty$ case, with the norm
%\[\|f\|_{\mathcal C^\infty}\defeq\sum_{k=0}^\infty \frac{1}{2^k}\frac{\sup_{(x,y)\in\R^2} |f^{(k)}(x,y)|}%{1+\sup_{(x,y)\in\R^2} |f^{(k)}(x,y)|}.\]
Whenever there is no possibility of confusion, we will just write $\|f\|$ instead of $\|f\|_{\mathcal C^0}$. For a holomorphic function $\varphi:U\to \C$,  $\textup{CP}(\varphi)$ denotes the set of all critical points of $\varphi$ in $U$.

\subsection*{Structure of the paper} In Section~\ref{sec:prelim}, we introduce the tools that we require from approximation theory. In Section~\ref{sec:conjugacy} we prove that two holomorphic maps defined on a compact set that are close and have the same critical points and critical values are conjugate by a conformal isomorphism which extends to $\C$. Section~\ref{sec:conjugacies-nonautseq} contains the key result used in the proofs of Theorems~\ref{thm:main-escaping} (escaping dynamics), \ref{thm:Jordan-continua merged-oscillating} (oscillating dynamics) and \ref{thm:entire} (the entire case), which are in Section~\ref{sec:main-proof}. The univalent case is addressed in Section~\ref{sec:univalent}. In Section~\ref{sec:topology-wandering-domains} we construct examples of wandering domains realising all kinds of internal dynamics while having Jordan boundaries. Finally,  Section~\ref{sec:examples} contains several applications of our results to construct wandering domains with interesting boundary dynamics.

\subsection*{Acknowledgements} We thank Gustavo Ferreira, Phil Rippon, Gwyneth Stallard and James Waterman for interesting discussions on this topic.

\section{Preliminaries} \label{sec:prelim}
%\subsection{Approximation}

In this section we introduce the tools we need from approximation theory. We will use the following version of Runge's theorem \cite[Theorem 2 on p. 94]{gaier87}, which is based on \cite[Main~lemma]{eremenko-lyubich87}, and it can be proved in a similar way; see also \cite[Appendix]{bocthaler21} and \cite[Theorem~4.1]{mrw2}. 

\begin{thm}[Extension of Runge's theorem]
 \label{thm:runge}
Let $A\subseteq \C$ be a compact set.  Suppose that $g\colon A\to \C$ is meromorphic on  a neighbourhood of $A$. 
  Then for every $\eps>0$, $M\in \mathbb{N}$ and every finite collection of points $C=\{z_n\}_{n=1}^{N}\subseteq A$, there exists a rational function~$f$ such that 
\[
\lvert f(z) - g(z)\rvert < \eps\quad \text{for all } z\in A \setminus g^{-1}({\infty})
\]
and  
\[
f^{(m)}(z_n)=g^{(m)}(z_n)
\]
for all $0\leq m\leq M$ and $1\leq n\leq N$. If $A$ is full and $g$ is holomorphic on a neighbourhood of $A$, then $f$ can be chosen to be a polynomial.
\end{thm}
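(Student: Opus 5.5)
The plan is to reduce to the classical Runge theorem by first handling the poles of $g$ and then correcting the finitely many jets at the points of $C$ with a finite-dimensional linear perturbation.

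\emph{Step 1: reduction to the holomorphic case.} Since $A$ is compact and $g$ is meromorphic on a neighbourhood $U$ of $A$, $g$ has only finitely many poles $p_1,\dots,p_k$ in a slightly smaller compact neighbourhood $A'\supset A$. Subtract the principal parts $P_j$ of $g$ at the $p_j$; these are rational functions. The function $h\defeq g-\sum_j P_j$ is then holomorphic near $A$. If $f_0$ approximates $h$ with the required jet conditions, then $f\defeq f_0+\sum_j P_j$ satisfies $f-g=f_0-h$ off the poles of $g$. Moreover $f^{(m)}(z_n)=g^{(m)}(z_n)$ for each $z_n$ that is not a pole. If some $z_n$ is a pole of $g$, we read $g^{(m)}(z_n)$ as matching the Laurent data, which is automatic once the principal part is included and $f_0$ matches the jet of $h$ there. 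So it suffices to treat $g$ holomorphic near $A$.

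\emph{Step 2: Runge plus jet correction.} For each $n$ and each $0\le m\le M$, choose a rational function $q_{n,m}$ with poles off $A$ (a polynomial if $A$ is full) such that the jets satisfy $q_{n,m}^{(m')}(z_{n'})=\delta_{nn'}\delta_{mm'}$. For instance, take Lagrange--Hermite interpolation polynomials, which exist since the $z_n$ are distinct. Let $B=\max\|q_{n,m}\|_A$. By Runge's theorem, pick a rational (polynomial if $A$ is full) $r$ with $|r-g|<\delta$ on a compact neighbourhood $A'$ of $A$ with poles off $A'$. By the Cauchy estimates, the jet errors $e_{n,m}\defeq g^{(m)}(z_n)-r^{(m)}(z_n)$ are then bounded by $C'\delta$, where $C'$ depends only on $\dist(A,\partial A')$ and $M$. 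Set
\[ f\defeq r+\sum_{n,m} e_{n,m}\, q_{n,m}. \]
Then all jets match exactly, and $|f-g|\le \delta+ N(M+1)C'\delta B<\eps$ on $A$ for $\delta$ small. In the full case, Runge's theorem gives a polynomial $r$ and the $q_{n,m}$ are polynomials, so $f$ is a polynomial.

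\emph{Main obstacle.} The only delicate point is the Cauchy estimate in Step 2: it requires approximation on a neighbourhood $A'$ rather than on $A$ itself. This forces us to apply Runge on $A'$, choosing $A'$ full when $A$ is full, which is possible by filling in a small closed neighbourhood. A secondary subtlety is poles of $g$ lying on $A$ itself, which is why the statement only demands approximation on $A\setminus g^{-1}(\infty)$; Step 1 handles this exactly.
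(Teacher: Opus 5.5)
Your proof is correct. The paper gives no argument of its own for this statement. It cites Gaier's book, the Main Lemma of Eremenko and Lyubich, Boc Thaler's appendix and Theorem~4.1 of Mart\'i-Pete, Rempe and Waterman, and says only that the result can be proved in a similar way.

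What you wrote is the standard self-contained proof of such interpolating versions of Runge's theorem:
\begin{enumerate}[(1)]
\item subtract the finitely many principal parts;
\item apply classical Runge on a slightly larger compact neighbourhood $A'$, so that Cauchy estimates make the jet errors $O(\delta)$;
\item absorb those errors with a fixed dual basis of Hermite interpolation polynomials, whose sup norm on $A$ does not depend on $\delta$.
\end{enumerate}
Compared with the citation, your version makes explicit why the approximation has to take place on a neighbourhood of $A$. Derivative evaluations at the $z_n$ are not controlled by the sup norm on $A$ itself, which may have empty interior or contain the $z_n$ on its boundary.

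The one step worth a line more than you give it is the existence, when $A$ is full, of a full compact neighbourhood $A'$ inside the domain of holomorphy. Filling in a small closed neighbourhood of $A$ works by a compactness argument using paths to $\infty$ in $\C\setminus A$. This is the same fact the paper relies on (via a reference) in the proof of Theorem~\ref{thm:entire}.
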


The following lemma will be used later to ensure that the boundaries of our compact sets are a subset of the Julia set.

 \begin{lem} \label{lem:preimages-points}
Let $U,V\subseteq \C$ be open and $f\colon U\to V$ be holomorphic and nowhere locally constant. Suppose that $X\subseteq U$ is a compact set. For every $\varepsilon>0$, there is $\delta>0$ with the following property. 
Suppose that $P\subseteq \C$ satisfies $\operatorname{dist} (z, P) \leq \delta$ for every $z\in f(X)$, and
 suppose that $\tilde{f}\colon U\to\C$ is a holomorphic function with  
  $|f-\tilde{f}|<\delta$ on $U$. Then $\operatorname{dist}(w,\tilde{f}^{-1}(P))\leq \varepsilon$ for every $w\in X$. 
 \end{lem}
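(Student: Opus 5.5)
The plan is a compactness argument combined with Hurwitz's theorem (equivalently, the argument principle). For each $w\in X$ we only need a point $z$ with $|z-w|\le\varepsilon$ and $\tilde f(z)\in P$. The idea is that, near $w$, the function $f$ is open, so a small disc around $w$ is mapped onto a neighbourhood of $f(w)$ that contains a definite disc. A small perturbation $\tilde f$ still covers a slightly smaller disc, and $P$ meets that disc once $\delta$ is small.

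First we may shrink $\varepsilon$ so that the closed $\varepsilon$-neighbourhood of $X$ is a compact subset of $U$. For each $w\in X$ the map $f$ is not locally constant, so $f(z)-f(w)$ has an isolated zero at $w$. Hence there is a radius $r_w\in(0,\varepsilon/2)$ such that $f(z)\ne f(w)$ on the circle $|z-w|=r_w$. Set
\[ m_w\defeq\min_{|z-w|=r_w}|f(z)-f(w)|>0. \]
By continuity of $f$, there is a neighbourhood $N_w$ of $w$, say $N_w\subset D(w,r_w/2)$, such that for $w'\in N_w$ we have $|f(w')-f(w)|<m_w/4$. The discs $N_w$ cover $X$, so by compactness finitely many of them suffice, say for $w_1,\dots,w_k$. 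We then set
\[ \delta\defeq\min_j m_{w_j}/4. \]

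Now let $w\in X$, and choose $j$ with $w\in N_{w_j}$. There is a point $p\in P$ with $|p-f(w)|\le\delta$, so $|p-f(w_j)|<m_{w_j}/2$. On the circle $|z-w_j|=r_{w_j}$ we have $|f(z)-f(w_j)|\ge m_{w_j}$, and also
\[ |(\tilde f(z)-p)-(f(z)-f(w_j))|<\delta+m_{w_j}/2<m_{w_j}\le|f(z)-f(w_j)|. \]
By Rouché's theorem, $\tilde f-p$ has a zero $z$ in $D(w_j,r_{w_j})$, because $f-f(w_j)$ has one there (namely $w_j$). Therefore
\[ |z-w|\le |z-w_j|+|w_j-w|<r_{w_j}+r_{w_j}/2<\varepsilon, \]
and $z\in\tilde f^{-1}(P)$, which gives the claim.

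The only delicate point is that the Rouché comparison must use a single circle that works uniformly for all $w$ in a neighbourhood. This is exactly why we fix the circles around the finitely many centres $w_j$, rather than around $w$ itself, where the lower bound $m_w$ could degenerate as $w$ varies. Note also that $\tilde f$ need only be close to $f$ on the compact neighbourhood, but the hypothesis $|f-\tilde f|<\delta$ on all of $U$ certainly suffices.
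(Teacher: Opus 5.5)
Your proof is correct and follows the same strategy as the paper. Both cover $X$ by finitely many small discs centred at points of $X$ and use quantitative openness of $f$ at the centres, so that each $w\in X$ lies within $\varepsilon$ of a point of $\tilde f^{-1}(P)$ found in the disc about its centre. The difference is that you handle $\tilde f$ directly with a Rouché estimate on fixed circles, which makes explicit the uniformity that the paper leaves to a one-line appeal to Hurwitz's theorem. As in the paper, you could also apply the hypothesis on $P$ at $f(w_j)$ itself, since $w_j\in X$; this would make the continuity condition on $N_w$ unnecessary.
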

\begin{proof} 
We first prove the lemma for $\tilde{f}=f$. Let $(B_n)_{n\in N}$ be a finite collection of open balls of radius $\varepsilon/2$, with $B_n$ centred at $z_n\in X$, such that $X\subset \bigcup_{n\geq 0} B_n$. By the open mapping theorem, the image $f(B_n)$ is an open set containing $f(z_n)$. Let $r_n>0$ be the radius of the largest ball of centre $f(z_n)$ that is contained in $f(B_n)$. Consider 
\[ \delta= \min_{n \in N} r_n .\]
Let $P$ be such that $\operatorname{dist} (z, P) \leq \delta$ for all $z\in f(X)$, then $f^{-1}(P)$ has a point in each ball $B_n$, so    $\operatorname{dist}(w,f^{-1}(P))\leq \varepsilon$ for every $w\in X$. 

The case where $\tilde{f}$ is close to $f$ follows from the previous one by Hurwitz's theorem. \end{proof}

%\subsection{Other preliminaries}
%The following is a version of the well-known Hurwitz theorem, which we use in the next section.
%\begin{thm}\label{thm:Hurwitz}
%Let $({f_k})$ be a sequence of holomorphic functions on a connected open set $G$ that converge uniformly on compact subsets of $G$ to a holomorphic function $f$ which is not constantly zero on G. If $f$ has a zero of order $m$ at $z_0$ then for every small enough $\rho > 0$ and for sufficiently large $k \in \mathbb{N}$ $ f_k$ has precisely $m$ zeros in the disc defined by $|z - z_0| < \rho$, including multiplicity.
%\end{thm}

%\begin{lem} \label{lem:wandering-Cantor-set}
%Let $f$ be a transcendental meromorphic function. Then $J(f)$ contains a wandering Cantor set.
%\end{lem}
%\begin{proof}
%We will construct an infinite iterated function system. Let $z_0\in J(f)$ and assume $U_0\subseteq \C$ is an open neighbourhood of $z_0$. Then, by the blowing-up property of the Julia set, there exists $n_0\in\N$ so that $f^{n_0}(U_0)$ contains $\overline{U_0}$ and $(f^{n_0}(U_0)\setminus \overline{U_0})\cap J(f)\neq \emptyset$. Set $U=f^{n_0}(U_0)$ and let $z_1\in (U\setminus \overline{U_0})\cap J(f)$. By the blowing-up property, there exists a neighbourhood $U_1$ of $z_1$ so that $f^{n_1}(U_1)=U$ and $(U\setminus (\overline{U_0}\cup \overline{U_1}))\cap J(f)\neq \emptyset$. Proceeding in this way, we can obtain infinitely many domains $(U_n)_{n=0}^\infty\subseteq U$ such that $f^{k_n}(U_n)=U$. 
%\end{proof}

We also make use of two simple results concerning approximation. 
   The first is a special case of \cite[Lemma 2.3]{mrw1} and concerns
   approximations of a conformal isomorphism. (It also follows
   from Lemma~\ref{lem:theta-map} below.)
 
 \begin{lem}\label{lem:univ-approx}
   Let $U,V\subseteq\C$ be open, and let $\phi\colon U\to V$ be a conformal isomorphism.
   
Let $A\subseteq U$ be a compact set. Then there is $\eps>0$ with the following property: if $f\colon U\to\C$ is holomorphic with $\lvert f(z) - \phi(z)\rvert \leq\eps$ for all $z\in U$, then 
      $f$ is injective on $A$, with $f(A)\subseteq V$. 
 \end{lem}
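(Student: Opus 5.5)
The strategy is a standard compactness argument for injectivity, combined with Rouché's theorem (or Hurwitz's theorem) for the inclusion $f(A)\subseteq V$. Replacing $A$ by a slightly larger compact set, I will assume $A$ is covered by finitely many closed discs $\overline{D(a_j,r_j)}\subset U$, $j=1,\dots,N$, with $r_j>0$. I will then fix a compact neighbourhood $A'$ of $A$ in $U$ that contains all these discs, together with their doubles $\overline{D(a_j,2r_j)}$, which I also take to lie in $U$. Write $\eta\defeq\dist(\phi(A'),\partial V)>0$, interpreted as $+\infty$ if $V=\C$. The inclusion $f(A)\subseteq V$ is then immediate once $\eps<\eta$.

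The first key step is \emph{local injectivity}. Fix $j$ and set $D_j\defeq D(a_j,2r_j)$. Since $\phi$ is injective on $\overline{D_j}$, the image $\phi(\partial D_j)$ is a Jordan curve bounding $\phi(D_j)$. Put
\[
m_j\defeq\dist\bigl(\phi(\overline{D(a_j,r_j)}),\phi(\partial D_j)\bigr)>0 .
\]
Take $\eps<m_j/2$ and $w\in f(\overline{D(a_j,r_j)})$. Then $w$ lies within $\eps$ of $\phi(\overline{D(a_j,r_j)})$, so $|\phi(z)-w|>m_j/2>\eps\ge|f(z)-\phi(z)|$ for $z\in\partial D_j$. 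Rouché's theorem therefore shows that $f-w$ and $\phi-w$ have the same number of zeros in $D_j$. That number is one, because $\phi$ is injective and $w$ is within $m_j/2$ of $\phi(\overline{D(a_j,r_j)})\subset\phi(D_j)$, hence inside the Jordan domain $\phi(D_j)$. Consequently $f$ is injective on $\overline{D(a_j,r_j)}$.

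The second step is the \emph{global} part, which is the only real obstacle: excluding $f(z)=f(z')$ for $z,z'\in A$ that are far apart. By the Lebesgue number lemma there is $\lambda>0$ such that any two points of $A$ at distance less than $\lambda$ lie in a common disc $\overline{D(a_j,r_j)}$. On the compact set $\{(z,z')\in A\times A\colon |z-z'|\ge\lambda\}$, injectivity of $\phi$ gives $|\phi(z)-\phi(z')|\ge\mu$ for some $\mu>0$. If $\eps<\mu/2$, such pairs satisfy
\[
|f(z)-f(z')|\ge\mu-2\eps>0 .
\]
Choosing $\eps<\min(\eta,\mu/2,\min_j m_j/2)$ combines both steps and proves the lemma.
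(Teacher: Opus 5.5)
Your proof is correct. It takes a different, more elementary route than the paper. The paper gives no self-contained proof: it cites \cite[Lemma~2.3]{mrw1} and notes that the statement also follows from Lemma~\ref{lem:theta-map}.

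\textbf{The paper's route.} Apply Lemma~\ref{lem:theta-map} with $C=\emptyset$ to an open $V'$ with $A\subset V'\Subset U$. This gives a conformal $\theta$ on $V'$ with $f\circ\theta=\phi$ and $\lvert\theta-\id\rvert$ small. Then $f=\phi\circ\theta^{-1}$ on $\theta(V')$, so $f$ is injective there and $f(\theta(V'))=\phi(V')\subset V$. It remains to check $A\subset\theta(V')$, which is one more Rouch\'e argument comparing $\theta-a$ with $z-a$ on a small circle around each $a\in A$. In that approach your ``global'' step disappears, because injectivity of $f$ at far-apart points is inherited from $\phi$ through $f\circ\theta=\phi$. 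As a by-product one gets a conformal change of variable close to the identity, and the paper needs Lemma~\ref{lem:theta-map} anyway for Lemma~\ref{lem:theta-n} and Theorem~\ref{thm:triangle1}.

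\textbf{Your route.} You prove exactly the stated conclusion and nothing more. You use Rouch\'e on finitely many fixed discs for local injectivity, plus a compactness separation constant $\mu$ for pairs at distance at least $\lambda$. You never construct $\theta$ through the residue integral, and the Lebesgue number $\lambda$ stitches the two steps together. The price is a little bookkeeping; the gain is that the lemma becomes independent of the heavier conjugacy lemma.

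\textbf{Two small points.}
\begin{itemize}
\item The Lebesgue number lemma needs an open cover. With closed discs it can fail: take two closed discs tangent at a point of $A$; points of $A$ on either side of the tangency, arbitrarily close together, lie in no common disc. So choose the discs so that the open discs $D(a_j,r_j)$ already cover $A$; everything else then goes through verbatim. The preliminary enlargement of $A$ and the set $A'$ are not needed (you can define $\eta$ using $\phi(A)$ instead of $\phi(A')$).
\item The Jordan-domain step in the local argument is superfluous. Since $\phi$ is injective, $\phi'$ never vanishes, so $\phi-w$ has at most one zero in $D_j$ counted with multiplicity. Rouch\'e then gives at most one zero of $f-w$ in $D_j$, which is all you need for injectivity of $f$ on $\overline{D(a_j,r_j)}$.
\end{itemize}
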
 

The second result states, essentially, that composition of functions is a continuous
operation; we omit the simple proof. (Compare also~\cite[Lemma 2.4]{mrw1}.)

\begin{lem}[Approximation of compositions]\label{lem:approx-iterates}
 Let $n\geq 0$, let $(U_k)_{k=0}^n$ be a sequence of
 open subsets of $\C$ and let $(g_k)_{k=0}^{n}$ be a sequence of
  continuous functions $g_k\colon U_k\to\C$. For $k\leq n$, define 
   \[ G_k\defeq g_k \circ \dots\circ g_0.\]
   Suppose that 
    $K\subseteq U_0$ is compact such that $G_n$ is defined on $K$.

    Then for every $\eps>0$, there is $\delta>0$ 
     with the following property. If $(f_k)_{k=0}^n$ are continuous functions
     $f_k\colon U_k\to\C$ 
     with $\lvert f_k(z) - g_k(z)\rvert < \delta$ for all $z\in U_k$, then
      $F_n$ is defined on $K$, where 
   \[ F_k\defeq f_k \circ \dots\circ f_0\]
     for $k\leq n$, and 
       \begin{equation}\label{eqn:iteratesclose} \lvert F_k(z) - G_k(z)\rvert < \eps   
       \end{equation}
       for all $z\in K$ and all $k\leq n$.       
\end{lem}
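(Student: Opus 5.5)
The argument is an induction on $k$ that combines compactness with uniform continuity. The key point is to find, at each stage, a compact set $K_k\subseteq U_k$ containing $G_{k-1}(K)$ in its interior, on which $g_k$ is uniformly continuous. For convenience, set $G_{-1}=\id$ and $K_{-1}$-images equal to $K$.

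First, I build compact sets $L_k\subseteq U_k$ for $k=0,\dots,n$ inductively, together with radii $r_k>0$. Since $G_{k-1}(K)$ is compact (a continuous image of $K$) and contained in the open set $U_k$, there is $r_k>0$ such that the closed $r_k$-neighbourhood $L_k\defeq\{z\colon \dist(z,G_{k-1}(K))\le r_k\}$ is contained in $U_k$. By uniform continuity of $g_k$ on the compact set $L_k$, there is $\eta_k>0$ such that $z,w\in L_k$ with $\lvert z-w\rvert<\eta_k$ implies $\lvert g_k(z)-g_k(w)\rvert<\eps'/2$, where $\eps'$ is a tolerance I choose below.

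Next, I choose the tolerances by a backward recursion. Set $\eps_n\defeq\min(\eps, r_{n+1})$ if I want to keep the images inside a further domain, though this is not needed; so simply set $\eps_n\defeq\eps$. Given $\eps_k$, define $\eps_{k-1}\defeq\min(\eps_k,r_k,\eta_k^{(\eps_k)})$, where $\eta_k^{(\eps_k)}$ is the uniform continuity modulus of $g_k$ on $L_k$ corresponding to $\eps_k/2$. Finally, let $\delta\defeq\min_k \eps_k/2$.

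The forward induction is then as follows. Suppose that $F_{k-1}$ is defined on $K$ and that $\lvert F_{k-1}(z)-G_{k-1}(z)\rvert<\eps_{k-1}$ for $z\in K$; the base case $k=0$ is trivial with $F_{-1}=G_{-1}=\id$. Since $\eps_{k-1}\le r_k$, we have $F_{k-1}(z)\in L_k\subseteq U_k$, so $F_k(z)=f_k(F_{k-1}(z))$ is defined. Then
\[
\lvert F_k(z)-G_k(z)\rvert\le \lvert f_k(F_{k-1}(z))-g_k(F_{k-1}(z))\rvert+\lvert g_k(F_{k-1}(z))-g_k(G_{k-1}(z))\rvert<\delta+\eps_k/2\le\eps_k,
\]
where the second term is bounded using uniform continuity on $L_k$, since both points lie in $L_k$ and are within $\eps_{k-1}\le\eta_k$ of each other. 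Because $\eps_k\le\eps$, this gives \eqref{eqn:iteratesclose}.

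The only delicate step is ensuring that perturbed orbits stay inside the domains $U_k$, so that uniform continuity can be applied there. This is exactly what the compact neighbourhoods $L_k$ and the backward choice of tolerances accomplish. The rest is bookkeeping.
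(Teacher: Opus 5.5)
Your proof is correct. The paper omits the proof of this lemma as ``simple'', and your induction is the standard argument it has in mind: compact neighbourhoods $L_k\subseteq U_k$ of $G_{k-1}(K)$, uniform continuity of $g_k$ on $L_k$, and tolerances chosen backwards so that perturbed orbits stay inside $L_k$. The only blemishes are cosmetic (the stray sentence involving the undefined $r_{n+1}$, and the placeholder $\eps'$). The base case $k=0$ also works, since there the continuity term vanishes.
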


\section{Approximation of holomorphic maps}
\label{sec:conjugacy}

In this section we prove that if two holomorphic functions are close and their critical points are also close (and of the same multiplicity), then the two functions differ by precomposition with a homeomorphism which is close to the identity. Note that two holomorphic functions can be arbitrarily close but have different sets of critical points (e.g. $f(z)= z^3$ has a double critical point but $g(z)=z^3+\varepsilon$ has two simple critical points for $\varepsilon\neq 0$).

\begin{lem}\label{lem:theta-map}
Let $U \subseteq \C$ be open, let $\varphi\colon U \to \mathbb{C}$ be a  holomorphic function. Also let 
$V\Subset U$ be open and assume that 
there exists a finite set 
$C\subset V$ containing all critical points of $\varphi$ in $U$. For every $\varepsilon >0$ there exists $\delta>0$ with the following property.

Suppose $f\colon U \to \C$ is holomorphic such that $|f-\varphi|\leq\delta$ on $U$,
such that $f=\varphi$ on $C$, and such that every critical point of
$\varphi$  in $V$ is a critical point of $f$ of the same multiplicity. 
%and such that for every critical point~$c$ of $\varphi$ in $X$, there exists a critical point $\tilde{c}$ of $f$ of the same multiplicity with $|c-\tilde{c}|<\delta$ and $\phi(c)=f(\tilde{c})$. 
Then there exists a conformal isomorphism $\theta\colon V \to \theta (V)$ such that  $f\circ \theta= \varphi$ on $V$, such that $|\theta(z)-z|<\varepsilon$ for all $z \in V$, and such that
$\theta =\id$ on $C$.
\end{lem}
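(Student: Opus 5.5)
Near each point $z_0\in\overline{V}$ I will define $\theta$ locally as a branch of $f^{-1}\circ\varphi$ chosen close to the identity. Then I will show these local branches glue to a single-valued map on $V$. Finally I will check that the result is injective, conformal, and fixes $C$.

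Write $\mathrm{CP}(\varphi)\cap\overline{V}\subseteq C$; these are finitely many points. First I fix $\eps$ and shrink it so that the discs $D(c,2\eps)$, $c\in C$, are pairwise disjoint, contained in $U$, and contain no other critical points of $\varphi$. \textbf{Near a critical point} $c$ of multiplicity $m-1$, I use local coordinates: $\varphi(z)=\varphi(c)+h(z)^m$ with $h$ univalent near $c$ and $h(c)=0$. By hypothesis, $f(c)=\varphi(c)$ and $c$ is a critical point of $f$ of the same multiplicity. Hurwitz's theorem ensures $f$ has no further critical points in $D(c,\eps)$ once $\delta$ is small. So $f(z)=\varphi(c)+k(z)^m$ with $k$ univalent on a fixed small disc $D(c,r)$. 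Since $f\to\varphi$ uniformly, we may choose $k$ converging to $h$ by taking appropriate $m$-th roots (the $m$-th root of $(f-\varphi(c))/(z-c)^m$ converges). Then I define $\theta=k^{-1}\circ h$ on $D(c,r/2)$. This satisfies $f\circ\theta=\varphi$ and $\theta(c)=c$, and it is close to the identity because $k\to h$ with control on the derivatives. \textbf{Away from critical points}, on the compact set $\overline{V}\setminus\bigcup_c D(c,r/4)$, the map $\varphi$ is locally univalent with derivative bounded below. So there is $\rho>0$ such that $\varphi$ is univalent on every $D(z_0,2\rho)$, and $f$ is univalent on $D(z_0,\rho)$ with $f(D(z_0,\rho))\supset\varphi(D(z_0,\rho/2))$ for small $\delta$, by Lemma~\ref{lem:univ-approx} and Rouché. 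There I set $\theta=(f|_{D(z_0,\rho)})^{-1}\circ\varphi$. Cauchy estimates give $|\theta(z)-z|\lesssim \delta/\inf|\varphi'|<\eps$.

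\textbf{Gluing.} Two such local definitions on overlapping discs both satisfy $f\circ\theta=\varphi$ and take values within $\eps'$ of $z$. On a disc where $f$ is injective, two preimages of the same point lying within $2\eps'\ll\rho$ of each other must coincide. Hence the definitions agree on overlaps, including overlaps with the critical-point charts (compare at a noncritical point and use the identity theorem). This yields a holomorphic $\theta\colon V\to\C$ with $f\circ\theta=\varphi$, $|\theta-\id|<\eps$, and $\theta|_C=\id$ (noncritical points of $C$ are fixed as well, since $f(c)=\varphi(c)$ and $f$ is injective near $c$ with $c$ the unique nearby preimage).

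\textbf{Injectivity and the main obstacle.} It remains to show $\theta$ is injective; then it is a conformal isomorphism onto $\theta(V)$. Suppose $\theta(z)=\theta(w)$. If $|z-w|$ is small, local injectivity of $\theta$ gives $z=w$: away from $C$, $\theta'$ is close to $1$; at a critical point, $\theta=k^{-1}\circ h$ is univalent. If $|z-w|\geq\eta$, then $|\theta(z)-\theta(w)|\geq\eta-2\eps>0$ once $\eps<\eta/2$. The delicate point, which I expect to be the main obstacle, is obtaining a uniform local injectivity radius $\eta$ independent of $\delta$. I would handle it by working on a slightly larger $V'\Subset U$ with $V\Subset V'$ and using the uniform convergence $\theta\to\id$ in $\mathcal{C}^1$ on $\overline V$, which follows from Cauchy estimates on $V'$ in the noncritical region and from $k\to h$ in $\mathcal{C}^1$ near each critical point. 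A holomorphic map that is $\mathcal{C}^1$-close to the identity on a compact neighbourhood is injective on small balls of a fixed radius. Combining this with the large-separation bound above gives injectivity.
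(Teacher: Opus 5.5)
Your proof is correct, but it differs from the paper's in two places.

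\textbf{Away from critical points.} The paper does not glue local inverse branches. It defines $\theta$ in one stroke by the residue formula $\theta(z)=\frac{1}{2\pi i}\int_{\partial D(z,\eta)}\zeta\,\frac{f'(\zeta)}{f(\zeta)-\varphi(z)}\,d\zeta$. By Rouch\'e, this returns the unique solution of $f=\varphi(z)$ in $D(z,\eta)$. Holomorphy and single-valuedness are then automatic. Injectivity is immediate because $2\eta$ is chosen below the minimal distance between distinct points of $\overline{V}$ in a common $\varphi$-fibre: if $\theta(z)=\theta(w)$, then $\varphi(z)=\varphi(w)$ and $|z-w|<2\eta$.

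\textbf{At critical points.} The paper argues topologically. It removes small discs around $C$ and applies the critical-point-free case on the complement $V^*$. It then uses Riemann--Hurwitz to see that $\varphi$ on $\Omega(c)$, and $f$ on the Jordan domain bounded by $\theta^*(\partial\Omega(c))$, are both degree-$d_c$ branched covers of $W(c)$ branched only at $c$. So $\theta^*$ extends across $\Omega(c)$ by lifting, and the estimate inside comes from the maximum principle. You instead use the normal forms $\varphi=\varphi(c)+h^m$ and $f=\varphi(c)+k^m$ with $k\to h$. In your version the multiplicity hypothesis enters through the order of vanishing of $f-\varphi(c)$ rather than through Riemann--Hurwitz. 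You then get injectivity uniformly over all of $V$ from $\mathcal{C}^1$-closeness plus the bound $|\theta(z)-\theta(w)|\geq|z-w|-2\eps'$.

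\textbf{What each route buys.} The paper's integral formula avoids any gluing and gives injectivity for free on $V^*$. Your route gives explicit derivative control near each $c$. It also spells out the global injectivity of the glued map, which the paper dismisses as following ``from the definition''. The price is the overlap bookkeeping and the need to run the construction on a slightly larger $V'$.

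\textbf{Your ``main obstacle''.} This step is easier than you suggest. Once $|\theta-\id|<\eps'$ on $V'$, Cauchy's estimate gives $|\theta'-1|\leq\eps'/\dist(V'',\partial V')$ on any $V\Subset V''\Subset V'$. So no separate $\mathcal{C}^1$ analysis near the critical points is needed.
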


\begin{proof} 
First we assume that $C=\emptyset$, and in particular that $\varphi$ has no critical points in $U$. Then $\varphi$ is locally univalent.

Let $\eta>0$ be so small  that 
$\eta < \operatorname{min} 
\bigl(\operatorname{dist}(\overline{V}, \partial U)/2, \varepsilon\bigr)$ and 
\begin{equation}\label{eqn:eta} 2\eta < \min\bigl\{\lvert \zeta - z\rvert \colon       \zeta,z\in \overline{V}, \zeta\neq z
\text{ and }\phi(\zeta)=\phi(z)\bigr\}. \end{equation}
The minimum exists because
 $\phi$ is continuous and $\overline{V}$ is
 a compact subset of $U$; it is positive
 because $\phi$ is locally univalent.
Finally, choose
 \[ \delta < \min\bigl\{\lvert \phi(z) - \phi(\zeta)\rvert \colon 
    z\in\overline{V},\zeta\in U\text{ and }\lvert z - \zeta\rvert = \eta \bigr\}/2.\]
    
Now suppose that $f$ is as in the statement
of the lemma and define $\theta\colon V\to\C$ 
by
\[\theta(z)\defeq\frac{1}{2\pi i}\int_{\partial D(z,\eta)} \zeta \frac{f'(\zeta)}{f(\zeta)-\varphi({z})}\mathrm{d}\zeta.
\]
To see that $\theta$ is indeed defined for 
all $z\in V$, note that 
$\overline{D(z,\eta)}\subset U$ by choice of $\eta$ and  
 \[ \lvert f(\zeta) - \varphi(z)\rvert 
     \geq \lvert \varphi(\zeta) - \varphi(z)\rvert - \lvert f(\zeta) - \varphi(\zeta)\rvert > 2\delta - \delta = \delta \]
 when $\lvert \zeta - z\rvert = \eta$.
 Furthermore,  $\theta$ is 
 a holomorphic function of $z$ since the 
 integrand is holomorphic in $z$.

Let $z\in V$. Then 
$\zeta\mapsto \phi(\zeta)-\phi(z)$ has exactly
one root in $D(z,\eta)$ by choice of $\eta$,
and this root is simple since $\phi$ is
locally univalent. If $\zeta \in \partial D(z,\eta)$, then
  \[ \lvert f(\zeta) - \varphi(\zeta)\rvert 
      \leq \delta < \lvert \varphi(\zeta) - \varphi(z)\rvert. \]
 By  Rouch\'e's theorem, $\zeta \mapsto f(\zeta)- \varphi(z)$ also has exactly one root $\alpha$ in $D(z, \eta)$, counting multiplicities. By the residue theorem,
\begin{align*}\theta(z)=\textup{Res}\Bigl(\zeta\mapsto \zeta\frac{f'(\zeta)}{f(\zeta)-\varphi(z)},\alpha\Bigr)&=
\lim_{\zeta\to\alpha} (\zeta-\alpha) \zeta\frac{f'(\zeta)}{f(\zeta)-\phi(z)} \\ 
&=\lim_{\zeta\to\alpha} \zeta\cdot 
  \frac{\zeta-\alpha}{f(\zeta)-f(\alpha)}\cdot
  f'(\zeta) = \alpha\cdot \frac{f'(\alpha)}{f'(\alpha)} = \alpha.
\end{align*}
Hence $f(\theta(z)) = \phi(z)$ for all 
    $z\in V$. Furthermore, 
$\theta(z)=\alpha \in D(z,\eta)$ by construction,
and hence 
$|\theta (z)-z| < \eta < \varepsilon$
for all $z\in V$. For the same reason, if
$\theta(z)=\theta(w)$, then 
$\lvert z - w\rvert < 2\eta$. 
 By~\eqref{eqn:eta}, we have $z=w$. So
 $\phi$ is injective, and hence a conformal
 isomorphism onto its image. This concludes
 the proof in the case where $C=\emptyset$.

Now assume that $C\neq\emptyset$.
For every $c\in C$, let $d_c$ be the local
 degree of $\phi$ at $c$. We may choose
 a small disc $W(c)$ around $\phi(c)$ such that
 $\overline{W(c)}\cap \phi(C) = \{\phi(c)\}$ and such that
the component $\Omega(c)$ of $\phi^{-1}(W(c))$ that contains $c$ 
satisfies $\overline{\Omega(c)}\subset V$. 
 Then $\Omega(c)$ is a Jordan domain and 
 $\phi\colon \Omega(c)\to W(c)$ is a branched covering
 map, with no critical points except
 (possibly) at $c$. 
 Choose $\rho>0$  so small that
   $\overline{D(c,\rho)}\subset \Omega(c)$ for all $c\in C$.

Now define $U^*\defeq U\setminus C$ and 
\[ V^*\defeq V\setminus 
    \bigcup_{c\in C} \overline{D(c,\rho)}
    \Subset U^*.\]
 Then $\phi$ has no critical points
  in $U^*$. Let $\delta>0$ be obtained
  by applying the case $C=\emptyset$ to the map 
  $\phi|_{U^*}:U^*\to\C$ with $V^*\Subset U^*$ and $\eps^* = \min(\eps,\dist(V^*,\partial U^*)/2)$.

Suppose that  $f$ is as in the statement of the 
lemma. By choice of $\delta$, there is a 
conformal isomorphism $\theta^*\colon V^* \to \theta (V^*)$ such that
 $f(\theta^*(z)) = \phi(z)$  and
 $\lvert \theta^*(z) - z\rvert < \eps^*$
 for all $z\in V^*$.

We claim that $\theta^*$ extends analytically
to $V$. Let $c\in C$; by assumption,
 $f$ has local degree $d_c$ at $c$. Moreover,
by choice of $W(c)$, the map
$\phi$ is a degree $d_c$ covering map when restricted to the Jordan curve $\partial \Omega(c)$. 
Since $f(\theta(z)) = \phi (z)$ on $\partial \Omega(c)$, the map $f$ is $d_c$-to-1  on $\theta(\partial \Omega(c))$. Let $\widetilde{\Omega}(c)$ be
the Jordan domain surrounded by 
 $\theta(\partial \Omega(c))$; by choice of
 $\eps^*$, we have $\widetilde{\Omega}(c)\subset U$ and
 $c\in \widetilde{\Omega}(c)$. 
 It follows by the Riemann--Hurwitz formula that $c$ is the only critical point
 of $f$ in $\widetilde{\Omega}(c)$.

So both $\phi\colon \Omega(c)\to W(c)$
and $f\colon \widetilde{\Omega}(c)\to W(c)$ are branched covering maps of degree $d_c$ with 
a single critical point at $c$. Hence there
is a conformal isomorphism
$\theta_c\colon \Omega(c)\to \widetilde{\Omega}(c)$ with
$\theta_c(c)=c$ and $f\circ \theta_c = \phi$, which may be chosen to agree
with $\theta^*$ on $\partial \Omega(c)$. The desired
map $\theta$ is given by
\[ \theta(z)\defeq \begin{cases}
            \theta^*(z) &\text{if }z\in V^*;\\
            \theta_c(z) &\text{if }z\in \Omega(c),
            c\in C. \end{cases}\]

 By definition, $f\circ \theta = \phi$ and
  $\theta(c)=c$ for all $c\in C$. Furthermore,
  $\lvert \theta(z) - z\rvert <\eps^* \leq \eps$ on $V^*$. Since 
  \[ V\setminus V^* = 
     \bigcup_{c\in C} \overline{D(c,\rho)}
     \Subset V,\]
     we also have 
  $\lvert \theta(z)-z\rvert <\eps $ on $V$
  by the maximum principle. That $\theta$ is 
  a conformal isomorphism follows from
  the definition. This completes
  the proof of the lemma.
\end{proof}

\begin{figure}[t]
\[
\xymatrix{
V_0 \ar[d]_{\theta_0} \ar[r]^{g_0} & V_1 \ar[d]_{\theta_1} \ar[r]^{g_1} %\ar[d]_{\theta_2}
&V_2 \ar[d]_{\theta_2}  \ar[r]^{g_2}& \quad \dots&  \ar[r]%%^{g_{\ell-1}}  
& V_{\ell} \ar[dr]^{g_{\ell}} \ar[d]_{\theta_{\ell}}\\
\theta_0(V_0)  \ar[r]^{f_0} & \theta_1(V_1) \ar[r]^{f_1}& \theta_2(V_2) \ar[r]^{f_2} & \quad \dots & \ar[r]%%^{f_{\ell-1}}
& \theta_{\ell}(V_\ell)\ar[r]^{f_{\ell}} & \C\\
}
\]
\caption{Diagram for the proof of Lemma {\ref{lem:theta-n}}.} \label{fig:row}
\end{figure}
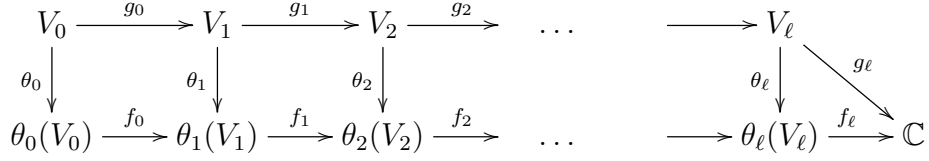

%\hspace*{2cm}\includegraphics[scale=0.5]{diagram.png}
\begin{lem}\label{lem:theta-n}
Let $U_0,\dots,U_{\ell}\subset\C$ be open sets, and let $g_n$ be a holomorphic function on $U_{n}$ for $n=0,\dots,\ell$.
  Also let $V_n\Subset U_n$ be open for $n=0,\dots,\ell$ such that
  $g_{n}(\overline{V_n})\subset V_{n+1}$ for $n=0,\dots,\ell-1$. Furthermore, let
  $C_n\subset V_n$ be a finite set for $n=0,\dots,\ell$ such that 
    $C_n$ contains all critical points of $g_{n}$ in $V_n$ and 
    such that $g_{n}(C_n)\subset C_{n+1}$ for $n<\ell$. Then for every $\eta>0$ there exists $\delta>0$ with the following property.

 Suppose that, for every $n=0,\dots,\ell$, $f_n$ is a holomorphic function 
 on $U_n$ with $\lvert f_n - g_n\rvert <\delta$, and
   that furthermore $f_n|_{C_{n}}=g_n|_{C_{n}}$, with every critical point $c\in C_{n}$ of $g_n$ 
  also a critical point of $f_n$ of the same multiplicity. 

   Then for every $n=0,\dots,\ell$, there exists a
   conformal homeomorphism $\theta_n\colon V_n \to \theta_n(V_n)$ with
   $\theta_n(V_n)\subset U_n$ such that  
     \begin{enumerate}[(a)]
       \item $f_{n}\circ \theta_{n}= \theta_{n+1} \circ g_{n}$ on $V_{n}$ for  
          $n=0,\dots,\ell-1$,\label{item:thetanrelation}
        \item $f_{\ell}\circ \theta_{\ell} = g_{\ell}$ on $V_{\ell}$, 
       \item $\theta_n=\id$ on $C_n$ for $n=0,\dots,\ell$, and 
       \item $|\theta_n(z)-z|<\eta$ for all $z \in V_n$.\label{item:thetanclose}
       \end{enumerate}
\end{lem}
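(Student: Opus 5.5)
Since $\theta_{n+1}$ must be known before $\theta_n$ can be defined via $f_n\circ\theta_n=\theta_{n+1}\circ g_n$, I will construct the maps by backward induction from $n=\ell$ down to $n=0$. Each step is an application of Lemma~\ref{lem:theta-map} to a suitable map; the tolerances are fixed backwards and then assembled into one $\delta$.

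\emph{Base step.} Apply Lemma~\ref{lem:theta-map} to $\phi=g_\ell$ on $U_\ell$, with $V=V_\ell$ and $C=C_\ell$. This gives $\delta_\ell$ such that, whenever $f_\ell$ satisfies the hypotheses, there is a conformal $\theta_\ell$ with $f_\ell\circ\theta_\ell=g_\ell$, $\theta_\ell=\id$ on $C_\ell$ and $\lvert\theta_\ell-\id\rvert<\eta_\ell$. Choosing $\eta_\ell\le\min(\eta,\dist(V_\ell,\partial U_\ell))$ ensures $\theta_\ell(V_\ell)\subset U_\ell$.

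\emph{Inductive step.} Suppose $\theta_{n+1}$ has been constructed with $\lvert\theta_{n+1}-\id\rvert<\eta_{n+1}$ on $V_{n+1}$ and $\theta_{n+1}=\id$ on $C_{n+1}$. We want $f_n\circ\theta_n=\tilde g_n\defeq\theta_{n+1}\circ g_n$ on $V_n$. Define $\tilde g_n$ on an open set $U_n'$ with $\overline{V_n}\subset U_n'\Subset U_n$ and $g_n(U_n')\subset V_{n+1}$; such $U_n'$ exists by compactness, since $g_n(\overline{V_n})\subset V_{n+1}$.

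The map $\tilde g_n$ has the same critical points as $g_n$, with the same multiplicities, because $\theta_{n+1}$ is conformal. Moreover $\tilde g_n=g_n$ on $C_n$, since $g_n(C_n)\subset C_{n+1}$ and $\theta_{n+1}=\id$ there. Hence $\tilde g_n=f_n$ on $C_n$, and every critical point of $\tilde g_n$ in $V_n$ is a critical point of $f_n$ of the same multiplicity.

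The obstacle is that Lemma~\ref{lem:theta-map} fixes $\phi$ first and then produces $\delta$, whereas $\tilde g_n$ depends on $f_{n+1},\dots,f_\ell$. So we need a $\delta$ that is uniform over the family of possible $\tilde g_n$.

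\emph{Uniformity.} I will apply Lemma~\ref{lem:theta-map} to $\phi=g_n$ on $U_n'$, with $V=V_n$, $C=C_n$ and tolerance $\eta_n$, obtaining $\delta_n'$. Then
\[
\lvert f_n-\tilde g_n\rvert\le\lvert f_n-g_n\rvert+\lvert g_n-\theta_{n+1}\circ g_n\rvert<\delta+\eta_{n+1}.
\]
So if I choose $\eta_{n+1}<\delta_n'/2$, and also $\delta<\delta_n'/2$, then $f_n$ is $\delta_n'$-close to $g_n$.

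However, that lemma yields $f_n\circ\hat\theta=g_n$, not $\tilde g_n$. To repair this I will instead apply Lemma~\ref{lem:theta-map} directly to $\tilde g_n$ and use that its $\delta$ depends only on quantities stable under small conformal perturbation. These are the constant in \eqref{eqn:eta}, the lower bound on $\lvert\phi(z)-\phi(\zeta)\rvert$ for $\lvert z-\zeta\rvert=\eta$, and the discs around critical points; the proof of the lemma shows each is controlled by $g_n$ once $\eta_{n+1}$ is small, since we may replace those minima by half their values for $g_n$.

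\emph{Assembling the constants.} Choose $\eta_\ell\ge\dots$ backwards as follows. Set $\eta_0=\min(\eta,\dist(V_0,\partial U_0))$. Each $\eta_n$ determines a uniform $\delta_n'$, and we require $\eta_{n+1}$ to be small relative to $\delta_n'$. Then pick $\eta_\ell$, which is the smallest, and finally $\delta=\min_n\delta_n'/2$.

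Properties (a)--(d) then follow directly from the construction. I expect the main care to be in verifying this uniformity of $\delta$ in Lemma~\ref{lem:theta-map} over small conformal post-compositions. If that verification fails, a fallback is to compose: find $\hat\theta$ with $f_n\circ\hat\theta=g_n$, and solve for $\theta_n$ using the inverse of $\theta_{n+1}$ near $g_n(V_n)$. This is awkward because $f_n$ is not injective, so I prefer the uniformity argument.
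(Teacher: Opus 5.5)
Your argument is correct in outline, and its architecture matches the paper's: constants are fixed upward from $n=0$ and the maps $\theta_\ell,\dots,\theta_0$ are built downward (the paper phrases this as induction on $\ell$). The real difference is how you reconcile $\theta_{n+1}\circ g_n$ with $g_n$.

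The paper takes exactly the route you set aside as a fallback. It puts $\hat f_n\defeq\theta_{n+1}^{-1}\circ f_n$ on a neighbourhood $U_n'$ of $\overline{V_n}$ with $g_n(\overline{U_n'})\subset V_{n+1}$. It then applies Lemma~\ref{lem:theta-map} once, to the \emph{fixed} map $g_n$. The resulting $\theta_n$ with $\hat f_n\circ\theta_n=g_n$ gives precisely (a).

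Your concern that $f_n$ is not injective does not arise, because only the conformal, near-identity map $\theta_{n+1}$ is inverted. For small $\delta$ and $\eta_{n+1}$:
\begin{enumerate}[(i)]
\item $f_n(\overline{U_n'})\subset\theta_{n+1}(V_{n+1})$, by Rouch\'e;
\item $\hat f_n$ is uniformly close to $g_n$;
\item $\hat f_n=g_n$ on $C_n$, because $\theta_{n+1}$ fixes $g_n(C_n)\subset C_{n+1}$;
\item $\hat f_n$ has the same critical points, with the same multiplicities, as $f_n$.
\end{enumerate}
So the paper uses Lemma~\ref{lem:theta-map} purely as a black box, plus continuity of $(f,\theta)\mapsto\theta^{-1}\circ f$.

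Your primary route instead needs a version of that lemma that is uniform over all maps $\theta\circ g_n$ with $\theta$ conformal, close to $\id$ and fixing $C_{n+1}$. This must be extracted from the lemma's proof, since the statement does not give it. That uniform version is true, and your sketch points at the right reasons:
\begin{enumerate}[(i)]
\item The separation constant in \eqref{eqn:eta} is literally unchanged under injective post-composition.
\item The Rouch\'e margin $\min\lvert\phi(z)-\phi(\zeta)\rvert$ decreases by at most $2\eta_{n+1}$.
\item Replacing the disc $W(c)$ by its image under $\theta$ leaves $\Omega(c)$ and $\rho$ unchanged. The proof only needs $W(c)$ to be a Jordan domain containing the single critical value $g_n(c)$, which $\theta$ fixes.
\end{enumerate}
So your proof can be completed, but it requires reopening Lemma~\ref{lem:theta-map}, whereas the composition trick obtains the same conclusion at no extra cost.
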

\begin{proof}
 We prove the claim by induction over $\ell$. For $\ell=0$, it is a consequence of Lemma~\ref{lem:theta-map}, 
  applied to $U=U_0$, $\phi=g_0$ and $V=V_0$. 

 Now suppose that the lemma is known for $\ell-1$, and let us prove it for $\ell$. Let $U\subset U_0$ be open with 
 $\overline{V_0}\subset U$ and $g_0(\overline{U})\subset V_1$. Apply Lemma~\ref{lem:theta-map} to 
 $\phi=g_0|_U$, with $V=V_0$ and  $\eps=\eta$. 
 
  We obtain a $\delta_0>0$ with the following property. Let 
  $f$ be holomorphic on $U$ 
  with $\lvert f - g_0\rvert < \delta_0$, and such that $f=g_0$ on $C_0$, 
  with the same multiplicity at critical points. Then there exists a conformal isomorphism $\theta$ on $V_0$ such that
  $f\circ \theta = g_0$, such that $\lvert \theta - \id\rvert < \eta$ on $V_0$
  and such that 
  $\phi = \id$ on $C_0$. 

 Now choose  $\delta_1>0$ so small that the following
 holds. If $f_0$ is holomorphic on $U_0$ with 
 $\lvert f_0 - g_0\rvert < \delta_1$ on $U_0$ and $\theta\colon V_1\to\C$ is conformal with $\lvert \theta - \id\rvert < \delta_1$
 on $V_1$ and $\theta|_{C_1}=\id$,
 then $\overline{f_0(U)}\subset \theta(V_1)$ and $\lvert \theta^{-1}\circ f_0 - g_0\rvert < \delta_0$ on $U$. Such a number $\delta_1$ exists because 
  $\theta^{-1}\circ f_0\to g_0$ uniformly on $U$ as $f_0\to g_0$ uniformly on
  $U_0$ and $\theta\to\id$ uniformly on $V_1$. 

 Now apply the inductive hypothesis to the maps $g_1,\dots,g_{\ell}$, 
   with $\tilde{\eta}\defeq \min(\eta,\delta_1)$. We obtain a number $\tilde{\delta}$ and set $\delta\defeq \min(\tilde{\delta},\delta_1)$.

  Now let the maps $(f_n)_{n=0}^{\ell}$ be as in the statement of the lemma (see Figure~\ref{fig:row}).
   We obtain $\theta_1,\dots,\theta_{\ell}$ with the desired properties from the
   inductive hypothesis. Moreover, again by the inductive hypothesis, 
   $\lvert \theta_1 - \id\rvert < \delta_1$ on $V_0$.
   Hence, by choice of $\delta_1$, 
   the map $f \defeq \theta_1^{-1}\circ f_0$ satisfies $\lvert f - g_0\rvert <
   \delta_0$ and $f=g_0$ on $C_0$. By choice of $\delta_0$,
   there is a map $\theta_0$ such that 
   $\theta_0 = \id$ on $C_0$, and such that $\lvert \theta_0(z)-z\rvert < \eta$  and
   $f_0\circ \theta_0 = \theta_1\circ f\circ\theta_0 = \theta_1\circ g_0$ on $V_0$.
\end{proof}

Now we show how we can extend a
 conformal isomorphism $\theta$ that is
 close to the identity to a diffeomorphism of the whole complex plane. 
 \begin{prop}\label{prop:diffeo}
  Suppose that $X\subset \R^2$ is compact and that $U\supset X$ is open. 
   Let $m\geq 1$, and let $\eps>0$.
   Then there is $\delta>0$ with the following property. 
   
  Suppose that $\theta\colon U\to\R^2$ is a diffeomorphism onto its image with $\|\theta - \operatorname{id}\|_{\mathcal{C}^m}<\delta$. Then 
    $\theta|_X$ extends to a $\mathcal{C}^{\infty}$ diffeomorphism $\tilde{\theta}\colon \R^2\to\R^2$ with 
    $\|\tilde{\theta} - \id\|_{\mathcal{C}^m} <\varepsilon$ and $\tilde{\theta}=\id$ on $\R^2\setminus U$.
 \end{prop}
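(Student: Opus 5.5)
We will use a cutoff-function interpolation between $\theta$ and the identity. Fix a compact neighbourhood of $X$: choose an open set $W$ with $X\subset W\Subset U$, and a $\mathcal{C}^\infty$ bump function $\chi\colon\R^2\to[0,1]$ with $\chi\equiv 1$ on a neighbourhood of $X$ and $\operatorname{supp}\chi\subset W$. Since $\overline{W}$ is compact and $\chi$ is fixed, the norms $\|\chi\|_{\mathcal{C}^k}$ for $k\leq m$ are finite constants depending only on $X$, $U$ and $m$. First we replace $\theta$ by a smooth map: if $\theta$ is only $\mathcal{C}^m$, this is not needed for the statement as given. In the applications $\theta$ is conformal, so it is $\mathcal{C}^\infty$ on $U$. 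We therefore assume $\theta$ is $\mathcal{C}^\infty$; otherwise the extension is only $\mathcal{C}^m$ away from $X$, which would require mollifying $\theta-\id$ outside a neighbourhood of $X$.

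Next we define
\[ \tilde\theta(z)\defeq z+\chi(z)\bigl(\theta(z)-z\bigr) \]
on $U$, and $\tilde\theta(z)\defeq z$ on $\R^2\setminus U$. This is $\mathcal{C}^\infty$ because $\chi$ vanishes near $\partial W$. By construction, $\tilde\theta=\theta$ on $X$ and $\tilde\theta=\id$ off $U$. We then estimate $h\defeq\tilde\theta-\id=\chi\cdot(\theta-\id)$ by the Leibniz rule:
\[ \|h\|_{\mathcal{C}^m}\leq C_m\,\|\chi\|_{\mathcal{C}^m}\,\|\theta-\id\|_{\mathcal{C}^m(W)}< C_m\|\chi\|_{\mathcal{C}^m}\,\delta, \]
where $C_m$ is a combinatorial constant. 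Choosing $\delta$ small makes this less than $\eps$.

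The main point is that $\tilde\theta$ is a diffeomorphism of $\R^2$, and this is where $m\geq 1$ is used. Shrinking $\delta$ further, we may assume $\|\Deriv h\|<1/2$ everywhere. Then $\Deriv\tilde\theta=I+\Deriv h$ is invertible at every point, so $\tilde\theta$ is a local diffeomorphism. It is also injective: $|\tilde\theta(z)-\tilde\theta(w)|\geq |z-w|-|h(z)-h(w)|\geq \tfrac12|z-w|$ by the mean value inequality on the convex set $\R^2$. Moreover, $\tilde\theta$ is proper, since it equals the identity outside a compact set. A proper injective local diffeomorphism of $\R^2$ has open and closed image, hence is surjective, and therefore $\tilde\theta$ is a $\mathcal{C}^\infty$ diffeomorphism. (Alternatively, $z\mapsto w-h(z)$ is a contraction, which gives surjectivity directly.)

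This argument never uses that $\theta$ itself is a diffeomorphism; only the $\mathcal{C}^1$-smallness of $h$ matters. The only delicate point is the smoothness of $\theta$ outside $X$, addressed above.
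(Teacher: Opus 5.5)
Your proof is correct and is essentially the paper's argument: the paper also sets $\tilde{\theta}(z)=z+\psi(z)(\theta(z)-z)$ for a cutoff $\psi$ vanishing outside some $V\Subset U$ and equal to $1$ on $X$, uses continuity of multiplication in the $\calC^m$-topology to get $\|\tilde{\theta}-\id\|_{\calC^m}<\min(1/2,\eps)$, and concludes from $\|\Deriv\tilde{\theta}-\Deriv\id\|<1$ that $\tilde{\theta}$ is a diffeomorphism. You additionally spell out the global injectivity and surjectivity that the paper leaves implicit. Your smoothness caveat is harmless, since the paper likewise tacitly takes $\theta$ to be $\calC^\infty$, as it is in the conformal application.
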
  
 \begin{proof}
   Let $V\Subset U$ be open with $X\subset V$. 
   Let $\psi\colon \R^2\to[0,1]$ be a smooth function such that
    $\psi=0$ on $\R^2\setminus V$ and $\psi=1$ on $X$. Let $z=(x,y)$ and define 
       \[ \tilde{\theta}(z) \defeq z + \psi(z)\cdot (\theta(z)-z). \] 
    Then, it follows by the definition of $\tilde{\theta}$ that  $\tilde{\theta}$ is $\mathcal{C}^{\infty}$ on $\R^2$, and also $\tilde{\theta}=\operatorname{id}$  
      on $\R^2\setminus V$ and $\tilde{\theta}=\theta$ on $X$. 

  We have $\| \psi\cdot (\theta - \id)\|_{\mathcal{C}^m}\to 0$ as $\| \theta - \id\|_{\mathcal{C}^m}\to 0$,
   since multiplication is continuous in the $\mathcal{C}^m$-topology. 
   %%% reference - e.g. 
   So we can choose $\delta>0$ such that, if
   $\|\theta - \id\|_{\mathcal{C}^m} < \delta$, then 
  \[ \|\tilde{\theta} - \id\|_{\mathcal{C}^m} <\min\Bigl(\frac{1}{2},\varepsilon\Bigr). \]
   The fact that $\|\Deriv \tilde{\theta} - \Deriv \id \| < 1$ implies that
       $\tilde{\theta}$ is a diffeomorphism, as required.
    \end{proof} 

\begin{cor}\label{cor:Cmextension}
 Let $m\geq 0$, let $\Theta\colon \C\to\C$ be
   a $\calC^{m}$ diffeomorphism and let $\eps>0$.

 Let $X\subset\C$ be compact and let
   $V\subset\C$ be an open neighbourhood of 
   $\Theta(X)$. Then there is $\eta>0$ with the following
  property. 

  Suppose that $\theta\colon V\to \theta(V)$ is
  a conformal isomorphism with 
  $\lvert \theta - \id\rvert < \eta$ on $V$. 
  Then there exists a $\calC^{\infty}$ diffeomorphism $\tilde{\theta}\colon\C\to\C$
  such that
  \begin{enumerate}[(a)]
   \item $\tilde{\theta}=\theta$ on
     a neighbourhood of $\Theta(X)$;\label{item:extendstheta}
   \item $\theta=\id$ outside $V$;\label{item:identityoutside} 
   \item $\| \tilde{\theta}\circ \Theta - \Theta\|_{\calC^m} < \eps$.\label{item:closetoTheta}
  \end{enumerate}
\end{cor}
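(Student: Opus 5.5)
The plan is to reduce the statement to Proposition~\ref{prop:diffeo}, applied on a slightly smaller neighbourhood of $\Theta(X)$. Cauchy estimates supply the $\calC^m$-smallness of $\theta-\id$ there. The chain rule then transfers smallness of $\tilde\theta-\id$ to smallness of $\tilde\theta\circ\Theta-\Theta$.

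\emph{Choice of neighbourhoods.} Since $\Theta(X)$ is compact and contained in the open set $V$, I fix $r>0$ and open sets $W'$, $W$ with
\[
\Theta(X)\subset W'\Subset W\Subset V, \qquad \dist(W,\partial V)>r .
\]
Set $Y\defeq\overline{W'}$; this is a compact set containing the open neighbourhood $W'$ of $\Theta(X)$.

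\emph{$\calC^m$ bound on $W$.} Suppose $\theta\colon V\to\theta(V)$ is conformal with $\lvert\theta-\id\rvert<\eta$ on $V$. The function $h\defeq\theta-\id$ is holomorphic and bounded by $\eta$ on $V$. For every $z\in W$ the disc $D(z,r)$ lies in $V$, so Cauchy's estimates give $\lvert h^{(k)}(z)\rvert\leq k!\,\eta/r^k$ for all $k$. The real derivatives $\Deriv^k h$ of a holomorphic function are controlled by $\lvert h^{(k)}\rvert$ up to a constant depending only on $k$. Hence
\[
\|\theta-\id\|_{\calC^m(W)}\leq C_{m,r}\,\eta ,
\]
with $C_{m,r}$ independent of $\theta$. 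Moreover $\theta|_W$ is a diffeomorphism onto its image, because it is conformal and injective.

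\emph{Applying Proposition~\ref{prop:diffeo}.} I apply it with the compact set $Y$, the open set $W\supset Y$, the given $m$ (using $m'=\max(m,1)$ if $m=0$), and a target $\eps'>0$ to be fixed below. This yields a $\delta>0$. I then choose $\eta$ so small that $C_{m,r}\eta<\delta$. The proposition produces a $\calC^\infty$ diffeomorphism $\tilde\theta$ with the following properties.
\begin{itemize}
\item $\tilde\theta=\theta$ on $Y$, hence on the neighbourhood $W'$ of $\Theta(X)$; this is \ref{item:extendstheta}.
\item $\tilde\theta=\id$ outside $W$, hence outside $V$; this is \ref{item:identityoutside}.
\item $\|\tilde\theta-\id\|_{\calC^m}<\eps'$.
\end{itemize}

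\emph{The composition estimate \ref{item:closetoTheta}.} Write $\tilde\theta\circ\Theta-\Theta=(\tilde\theta-\id)\circ\Theta=g\circ\Theta$, where $g\defeq\tilde\theta-\id$ is supported in $\overline W$. Consequently $g\circ\Theta$ is supported in $K\defeq\Theta^{-1}(\overline W)$. This set is compact because $\Theta$ is a homeomorphism of $\C$. By the chain rule (Fa\`a di Bruno), each $\Deriv^k(g\circ\Theta)$ with $k\leq m$ is a finite sum of terms of the form $(\Deriv^j g)\circ\Theta$ multiplied by products of derivatives of $\Theta$ of order at most $k$. These derivatives of $\Theta$ are continuous, so they are bounded on $K$; this is exactly where the hypothesis that $\Theta$ is $\calC^m$ is used. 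Hence
\[
\|g\circ\Theta\|_{\calC^m}\leq C_\Theta\,\|g\|_{\calC^m},
\]
with $C_\Theta$ depending only on $\Theta$, $m$ and $W$. Taking $\eps'\defeq\eps/C_\Theta$ (and $\eps'\leq\eps$ when $m=0$) gives \ref{item:closetoTheta}.

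\emph{Main obstacle.} The only point needing care is the order of choices. $W$, $r$, $K$ and $C_\Theta$ must be fixed first, depending only on $X$, $\Theta$ and $V$. Then $\eps'$ is determined, then $\delta$ via Proposition~\ref{prop:diffeo}, and finally $\eta$. With this order, $\eta$ does not depend on $\theta$, as the statement requires.
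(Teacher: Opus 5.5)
Your proof is correct and follows the same route as the paper: shrink to a compactly contained neighbourhood of $\Theta(X)$, use Cauchy estimates to get $\calC^m$-smallness of $\theta-\id$ there, apply Proposition~\ref{prop:diffeo}, and then use continuity of composition to obtain~\ref{item:closetoTheta}. You also make explicit several points the paper leaves implicit. You apply the proposition to a compact neighbourhood $Y$ of $\Theta(X)$, so that~\ref{item:extendstheta} holds on a neighbourhood. You handle $m=0$ via $\max(m,1)$. And you justify the composition estimate using the compact support of $\tilde\theta-\id$, which is needed because $\Theta$ need not have globally bounded derivatives.
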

\begin{proof}
 Let $\tilde{V}\subset V$ be a neighbourhood 
  of $\Theta(X)$ that is compactly contained 
  in $V$. By Cauchy's integral formula, we
  have $\|\theta - \id\|_{\calC^m}\to 0$ on
  $\tilde{V}$ as 
  $\lvert \theta - \id\|\to 0$ on $V$. 

 Hence the existence of an extension 
   $\tilde{\theta}$ with
   properties~\ref{item:extendstheta} and~\ref{item:identityoutside} follows
   from Proposition~\ref{prop:diffeo} for
   sufficiently small $\eta$,
   and moreover $\|\tilde{\theta}-\id\|_{\calC^m}\to 0$ as $\eta\to 0$. 

  Since composition is continuous in the
   $\calC^m$-topology, it follows that
   also $\| \tilde{\theta}\circ \Theta - \Theta\|_{\calC^m}\to 0$ as $\eta\to 0$;
   hence~\ref{item:closetoTheta} holds
   for sufficiently small $\eta$. 
\end{proof}

\section{Conjugacies for non-autonomous sequences}

\label{sec:conjugacies-nonautseq}

 We will now establish the key ingredient in the proof of Theorems~\ref{thm:Jordan-continua merged} and Theorem~\ref{thm:Jordan-continua merged-oscillating}.
 Similarly as in the papers~\cite{bocthaler21,mrw1,mrw2}, the idea is to construct the desired  meromorphic function as a limit of functions that are 
 defined by approximation, and exhibit the desired behaviour on neighbourhoods of more and more of the compact sets in
 question. The key point is that, if in every step we choose the next function sufficiently close to $\phi$ on the set $X_n$, and
 close to the previously constructed function on $X_0,\dots,X_{n-1}$, then the resulting limit function will be conjugate to $\phi$ in
 the manner that we desired.

 This idea is applicable in a wide variety of contexts, beyond the context of our article.
 Therefore we will phrase it in a more general, but also more
  abstract setting, for the purpose of future applications. Let us begin by defining the class
  of systems that we will be approximating (of which the setting in Theorem~\ref{thm:Jordan-continua merged}
  is a special case); recall that we denote the set of critical points of a function $\phi$ by $\CP(\phi)$.

\begin{dfn}[Holomorphic non-autonomous systems]
  A \emph{marked holomorphic non-auto\-nomous system on compact sets} is a 
   triple $\Phi = \bigl((X_n)_{n=0}^{\infty},(\phi_n)_{n=0}^{\infty},(C_n)_{n=0}^{\infty}\bigr)$,
   where each $X_n$ is a compact subset of $\C$, each $\phi_n$ is a holomorphic
     function defined on an open neighbourhood of $X_n$
     such that $\phi_n(X_n)\subset X_{n+1}$, and each $C_n$ is a finite set
     with $\CP(\phi_n)\cap X_n \subset C_n$ and $\phi_n(C_n)\subset C_{n+1}$.   
\end{dfn}
\begin{rmk}
    For brevity, in the following we will write ``non-autonomous system'' instead of ``marked holomorphic non-autonomous
  system on compact sets'', since this is the only type of system that we consider in this article. 
  \end{rmk}

  We now wish to consider recursive constructions of the following form: 
   We choose (in some way) a function $f_{0,1}$ that is close to $\phi_0$ on a neighbourhood of $X_0$.
   Then we choose a function $f_{0,2}$ that is close to $f_{0,1}$ (with the same domain of definition),
   and a function $f_{1,2}$ that is close to $\phi_1$ on a neighbourhood of $X_1$. Inductively, in step $j$
   we choose functions $f_{n,j}$ close to $f_{n,j-1}$ for $n< j-1$, and $f_{j-1,j}$ close to $\phi_{j-1}$. 
   The goal is to show that, if the approximation is chosen sufficiently close at every step (and 
   respects critical points in an appropriate manner), then
   for each $n$, the functions $f_{n,j}$ converge to a function $f_n$, and furthermore these limiting 
   maps form a non-autonomous system (on a suitable sequence of compact sets) that is conjugate
   to the original one. 

 To make this type of construction more precise, it is useful to introduce the following
  notation for the ``triangles'' of functions $(f_{n,j})_{0\leq n < j}$ obtained in a construction
  as above. All the objects 
  introduced in the following depend on a given non-autonomous system $\Phi$; for simplicity of
  notation we will suppress this dependence.

\begin{dfn}[Triangles]\label{defn:triangles}
  Let $\Phi = \bigl((X_n)_{n=0}^{\infty},(\phi_n)_{n=0}^{\infty},(C_n)_{n=0}^{\infty}\bigr)$ 
  be an non-autonomous system. 
  For $n\geq 0$, we denote by $\U_n(\Phi)$ the set of all open neighbourhoods
  of $X_n$ on which $\phi_n$ is defined.

  A \emph{finite triangle} $\mathcal{T}$ (of level $k\geq 0$) is a collection $\mathcal{T} = (f_{n,j})_{0\leq n < j \leq k}$ 
   of holomorphic functions such that the following conditions hold.
   \begin{enumerate}[(a)]
     \item Let $0\leq n < k$. Then all $f_{n,j}$, where $j=n+1,\dots,k$, are 
       defined and holomorphic on the same domain $U_{n}  = U_{n}(\mathcal{T})\in \U_n(\Phi)$. 
     \item For every $z\in C_{n}$ and
     $n< j \leq k$, we have $f_{n,j}(z) = \phi_{n}(z)$ and the local degree of $f_{n,j}$ near
     $z$ is the same as that of $\phi_{n}$.
   \end{enumerate}
  The set of finite triangles of level $k$ is denoted by $\Delta_k(\Phi)$; observe that $\Delta_0(\Phi)$ only contains
   one element (the triangle whose index set is empty). 
   
  An \emph{infinite triangle} is a collection $\mathcal{T} = (f_{n,j})_{0\leq n < j < \infty}$ 
    such that $\mathcal{T}_k\defeq (f_{n,j})_{0\leq n < j \leq k}\in \Delta_k(\Phi)$ 
    for all $k\geq 0$. Note that $f_{n,j}$ is defined on a domain $U_{n}(\mathcal{T})\in \U_n(\Phi)$ that depends 
      on $n$ but is independent of $j$.
\end{dfn}

\begin{figure}[t]
\[
\xymatrix{
X_{0} \ar[d]_{\theta_{0,0}} \ar[dr]^{\varphi_0} \\
X_{0,1} \ar[d]_{\theta_{0,1}} \ar[r]^{f_{0,1}} & X_{1} \ar[d]_{\theta_{1,1}} \ar[dr]^{\varphi_1} \\
X_{0,2} \ar[d]_{\theta_{0,2}} \ar[r]^{f_{0,2}} & X_{1,2} \ar[d]_{\theta_{1,2}} \ar[r]^{f_{1,2}} & X_{2} \ar[dr]^{\varphi_2} \ar[d]_{\theta_{2,2}}\\
X_{0,3}  \ar[r]^{f_{0,3}}\ar[d]_{\theta_{0,3}} & X_{1,3} \ar[r]^{f_{1,3}}\ar[d]_{\theta_{1,3}}& X_{2,3}\ar[r]^{f_{2,3}} \ar[d]_{\theta_{2,3}}& X_{3} \ar[d]_{\theta_{3,3}} \ar[dr]^{\varphi_3} \\
\vdots & \vdots & \vdots & \vdots & \ddots
}
\] 
\caption{Diagram for the proof of Theorem~\ref{thm:triangle1}. 
For $0\leq n\leq j$, the restriction of $\Theta_{n,j}$ to $X_{n,n}=X_n$, is the composition $\theta_{n,j-1}\circ\dots\circ\theta_{n,n}$, which maps $X_n$ to $X_{n,j}$. 
In particular, $\Theta_{j,j}=\id$ and $\Theta_{j,j+1}=\theta_{j,j}$.}
\label{fig:mainabstract}
\end{figure}
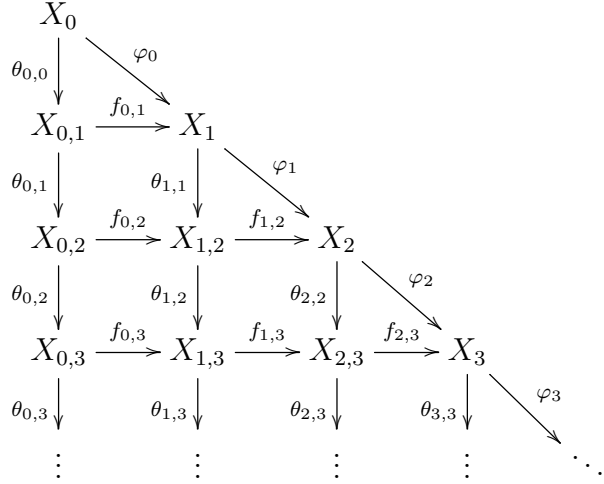   

\pagebreak

With this notation, the main result of this section is the following. 
\begin{thm}[Conjugacy for close systems]\label{thm:triangle1}
    Let $\Phi = \bigl((X_n)_{n=0}^{\infty},(\phi_n)_{n=0}^{\infty},(C_n)_{n=0}^{\infty}\bigr)$ be a non-autonomous system.
      Let $(\varepsilon_n)_{n=0}^\infty$ be a sequence of positive real numbers and let
 $(M_n)_{n=0}^{\infty}$ be a sequence of non-negative integers. 

     Then there is a sequence $(\delta_j)_{j=1}^{\infty}$ of functions
         \begin{equation}\label{eqn:deltaj} \delta_j\colon \Delta_{j-1}(\Phi) \times \U_{j-1}(\Phi)\to (0,\infty)\end{equation}
       with the following property. 

  Suppose that $\mathcal{T} = (f_{n,j})_{0\leq n < j  <\infty}$ is an infinite triangle such that,
    for all $j\geq 1$,
    \begin{align}    \label{eqn:phclose} \lvert f_{j-1,j} - \phi_{j-1}\rvert &< \delta_j(\mathcal{T}_{j-1},U_{j-1}(\mathcal{T}))
    \quad\text{and} \\
    \label{eqn:fclose} \lvert f_{n,j}  - f_{n,j-1} \rvert &< \delta_{j}(\mathcal{T}_{j-1},{U}_{j-1}(\mathcal{T})) \quad\text{for $0\leq n<j-1$.}\end{align}
   (Here  $\T_{j-1}\in\Delta_{j-1}(\Phi)$ is the finite triangle of level $j-1$ 
    determined by
   $\T$.)

  Then for  every $n\geq 0$, the sequence $(f_{n,j})_{j=n+1}^{\infty}$ converges uniformly to a holomorphic map
    $f_{n}\colon U_{n}(\T)\to\C$ and there exists a sequence of diffeomorphisms 
     $\Theta_{n}\colon\C\to\C$ with the following properties.
\begin{enumerate}[(a)]
\item $f_{n} \circ \Theta_n = \Theta_{n+1} \circ \phi_{n}$ on $X_n$ for all $n\geq 0$;
\item\label{item:Thetaderivative} 
    $\|\Theta_n(z)-\id\|_{\calC^{M_n}} \leq \eps_n$ on $U_n(\T)$ for all $n\geq 0$;
\item $\Theta_n=\id$ outside $U_n(\T)$ and on $C_n$ for all $n\geq 0$;
\item $\Theta_n$ satisfies the Cauchy--Riemann differential equations on $X_n$; in particular, it is conformal on $\operatorname{int}(X_n)$.
\end{enumerate}    
\end{thm}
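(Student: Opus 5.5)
We build the diagram of Figure~\ref{fig:mainabstract} row by row, choosing $\delta_j$ recursively as a function of the already constructed finite triangle $\T_{j-1}$ and of the domain $U_{j-1}$. The guiding principle is that at step $j$ only finitely many data are in play, namely $f_{0,j-1},\dots,f_{j-2,j-1}$ on $U_0,\dots,U_{j-2}$ and $\phi_{j-1}$ on $U_{j-1}$. Lemma~\ref{lem:theta-n} then converts a small perturbation of this finite row into conformal maps $\theta_{n,j-1}$ close to the identity that conjugate the old row to the new one.

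Concretely, suppose $\T_{j-1}$ is given and the maps $\Theta_{n,j-1}$ (compositions $\theta_{n,j-2}\circ\dots\circ\theta_{n,n}$) have already been constructed on neighbourhoods of $X_n$. Set $X_{n,j-1}\defeq \Theta_{n,j-1}(X_n)$ for $n\le j-1$. Choose open sets $V_n\Subset U_n$ with $X_{n,j-1}\subset V_n$ and $g_n(\overline{V_n})\subset V_{n+1}$, where $g_n=f_{n,j-1}$ for $n<j-1$ and $g_{j-1}=\phi_{j-1}$. We may take $C_n\subset V_n$, since $\Theta_{n,j-1}=\id$ on $C_n$ and $g_n(C_n)\subset C_{n+1}$. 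This requires that $g_n$ have no critical points in $V_n$ outside $C_n$. We arrange this inductively: by Lemma~\ref{lem:theta-map}, $f_{n,j-1}\circ\Theta_{n,j}=\Theta_{n+1,j}\circ\phi_n$ with conformal $\Theta$'s, so the critical points of $f_{n,j-1}$ near $X_{n,j-1}$ are exactly those of $\phi_n$, which lie in $C_n$. Now apply Lemma~\ref{lem:theta-n} to $(g_n)_{n=0}^{j-1}$ with a tolerance $\eta_j$, obtaining a $\delta$; let $\delta_j$ be at most this $\delta$. If \eqref{eqn:phclose} and \eqref{eqn:fclose} hold, the lemma yields conformal $\theta_{n,j-1}$ on $V_n$ satisfying $f_{n,j}\circ\theta_{n,j-1}=\theta_{n+1,j-1}\circ f_{n,j-1}$, together with $f_{j-1,j}\circ\theta_{j-1,j-1}=\phi_{j-1}$, $\theta=\id$ on $C_n$, and $|\theta-\id|<\eta_j$. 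Composing these relations gives $f_{n,j}\circ\Theta_{n,j+1}=\Theta_{n+1,j+1}\circ\phi_n$ on $X_n$.

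The key choice is $\eta_j$, and this is the step I expect to be the main obstacle, because we need $\calC^{M_n}$ control of an infinite composition $\Theta_n=\lim_j\Theta_{n,j}$. I would use Corollary~\ref{cor:Cmextension} with $\Theta=\Theta_{n,j}$ (extended globally) and $m=M_n$, applied separately for each $n<j$. It supplies $\eta$ small enough that $\theta_{n,j-1}$ extends to a global $\calC^\infty$ diffeomorphism $\tilde\theta_{n,j-1}$, equal to the identity outside a neighbourhood inside $U_n$, with $\|\tilde\theta_{n,j-1}\circ\Theta_{n,j}-\Theta_{n,j}\|_{\calC^{M_n}}<\eps_n 2^{-j-1}$. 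The extended $\Theta_{n,j+1}\defeq\tilde\theta_{n,j-1}\circ\Theta_{n,j}$ is then Cauchy in $\calC^{M_n}$, and the total distance from $\id$ is at most $\eps_n$. We also need the limit to remain a diffeomorphism. For this I would additionally require the $\calC^1$ increments to sum to less than the amount by which $\Deriv\Theta_{n,j}$ stays invertible (again using Proposition~\ref{prop:diffeo}), and keep $\eps_n<1/2$ in the $\calC^1$ norm, so that $\|\Deriv\Theta_n-I\|<1$. Since each $\tilde\theta$ is holomorphic near $X_{n,j}$, each $\Theta_{n,j}$ satisfies the Cauchy--Riemann equations on $X_n$. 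This property passes to the $\calC^1$ limit, giving~(d), while~(c) holds at every stage.

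Finally, we take $\delta_j$ also at most $2^{-j}$ times a constant, so that $f_{n,j}$ converges uniformly on $U_n$ to a holomorphic $f_n$. We also take $\delta_j$ small enough that the nested choices of $V_n$ remain valid at later stages: the images $X_{n,j}$ stay in a fixed compact subset of $U_n$ because the $\eta_j$ are summable. Passing to the limit in $f_{n,j}\circ\Theta_{n,j+1}=\Theta_{n+1,j+1}\circ\phi_n$, using uniform convergence and equicontinuity, gives~(a).
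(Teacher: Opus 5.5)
Your proposal is correct and follows essentially the same route as the paper. At each step you apply Lemma~\ref{lem:theta-n} to the row $(f_{0,j-1},\dots,f_{j-2,j-1},\phi_{j-1})$ with tolerance $\eta_j$ supplied by Corollary~\ref{cor:Cmextension}, extend the resulting conformal maps to global diffeomorphisms, compose them into $\Theta_{n,j}$, and pass to the limit; your index bookkeeping is off by one in places, as is the paper's. The only difference is that the paper applies Corollary~\ref{cor:Cmextension} with $m=\max(M_n,j)$, so the $\Theta_{n,j}$ converge in every $\calC^m$ norm and the limit is $\calC^\infty$, which is needed later for Theorem~\ref{thm:main-escaping}; your $\calC^{\max(M_n,1)}$ control gives only a $\calC^{\max(M_n,1)}$ diffeomorphism, which is enough for this statement as written.
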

\begin{proof}
 We construct the 
   functions $(\delta_j)_{j=1}^\infty$  recursively, ensuring
   inductively that the following hold. 
   Set $m_{n,j}\defeq \max(M_n,j)$ for
   $0\leq n \leq j$. 
   Suppose that 
   $\T$ is a triangle as in the statement of the theorem. Then there exists a collection
   $(\Theta_{n,j})_{0\leq n \leq j < \infty}$ of $\calC^{m_{n,j}}$ diffeomorphisms
   $\Theta_{n,j}\colon \C\to\C$
   with the folllowing properties. 
   \begin{enumerate}[1.]
    \item $\Theta_{j,j}=\id$ for all $j\geq 0$.
    \item Each $\Theta_{n,j}$ is conformal in 
      a neighbourhood of $X_n$.
    \item $\Theta_{n,j}=\id$ outside $U_n$ and on $C_n$. 
    \item $\Theta_{n+1,j}\circ \phi_j = f_{n,j}\circ \Theta_{n,j}$ on an open neighbourhood of $X_n$ when 
      $0\leq n < j$.\label{item:Thetarelation}
     \item $\| \Theta_{n,j}-\Theta_{n,j+1}\|_{\calC^{m_{n,j+1}}}<\eps_n/2^{j+1}$ when $0\leq n\leq j$.
     \item $\delta_j(\T_{j-1},U_{j-1})\leq 2^{-j}$ for all $j$. 
   \end{enumerate}
   Moreover, for fixed $j\geq 0$ the maps $(\Theta_{n,j})_{n=0}^j$ depend 
   on $\T_j$, but not on other entries of
   the triangle $\T$. 

 We now show how to define 
  $\delta_j$ with the above properties,
  by using Lemma~\ref{lem:theta-n} and
  Corollary~\ref{cor:Cmextension} inductively. 

 Let $j\geq 1$ and 
   suppose that a finite triangle 
   $\T_{j-1}$ of level $j-1$ is given such
   that the maps $(\Theta_{n,j-1})_{n=0}^{j-1}$ 
   have already been defined with the above
   properties. Also let $U_{j-1}\in\U_{j-1}(\Phi)$. We must show how to 
   define $\delta_j = \delta_j(\T_{j-1}, U_{j-1})$ in such a way that 
   the maps $(\Theta_{n,j})_{n=0}^{j}$ exist
   for any triangle $\T_j$ extending $\T_{j-1}$ 
   and satisfying~\eqref{eqn:phclose} and~\eqref{eqn:fclose}. 

  Set $X_{n,j-1}\defeq \Theta_{n,j-1}(X_n)$ 
  for
  $0\leq n \leq j-1$. Define $U_n\defeq U_n(\T_{j-1})$ for $n<j-1$ and
  let $(g_n)_{n=0}^j$ be the sequence of holomorphic functions defined by
   $g_n\defeq f_{n,j}$ for $n<j-1$ and $g_{j-1}\defeq \phi_{j-1}|_{U_{j-1}}$. 
   Now choose open sets
   $V_n = V_{n,j-1}$ with $X_{n,j-1}\subset V_{n}\Subset U_n$
   and $g_n(\overline{V_n})\subset V_{n+1}$ for $n=0,\dots,j-2$. To see that this is possible,
   let $V_{j-1}\Subset U_{j-1}$  be any open neighbourhood of $X_{j-1,j-1}=X_{j-1}$, and recursively choose
   $V_{n-1}$ such that $g_{n-1}(\overline{V_{n-1}})\subset V_n$; this is possible since 
   $g_{n-1}(X_{n-1,j-1}) \subset X_{n,j-1}$. We may also assume that $V_n$ is chosen
   sufficiently small to ensure that all critical points of $g_n|_{V_n}$ are in $X_{n,j-1}$, and hence in 
   $C_n$. 

 For $n=0,\dots,j-1$, apply Corollary~\ref{cor:Cmextension} to $m=m_{n,j}$, $\Theta=\Theta_{n,j-1}$, 
  $\eps=\eps_n/2^{j}$, $X=X_n$ and $V=V_{n,j-1}$. We obtain a number $\eta_{n,j}$ with the
  following property: If $\theta$ is a conformal isomorphism defined on $V_{n,j-1}$ and
  satisfying $\lvert \theta - \id\rvert < \eta_{n,j}$, then there exists a $\calC^{\infty}$ diffeomorphism
  $\tilde{\theta}\colon\C\to\C$ that agrees with $\theta$ on a neighbourhood of $X_{n,j-1}$, is the identity 
  outside $V_{n,j-1}$ (and hence outside $U_n$), and $\theta\circ \Theta_{n,j-1}$ 
  is $\eps$-close to
  $\Theta_{n,j-1}$ in the $\calC^{m_{n,j}}$-norm. 

 Set $\eta_j\defeq \min_{0\leq n \leq j-1} \eta_{n,j}$ and apply Lemma~\ref{lem:theta-n}, where
 $\ell=j-1$ and $\eta=\eta_j$. 
    The domains $(U_n)_{n=0}^{\ell}$ and $(V_n)_{n=0}^{\ell}$, the maps $(g_n)_{n=0}^{\ell}$ 
    and the sets $C_n$ are as defined above, and satisfy the hypotheses of the lemma by construction.
    We obtain a number
    $\delta$, and define $\delta_j(\T_{j-1},U_{j-1})\defeq \min(\delta,2^{-j})$
    are as above, and domain, with $\ell=j-1$ and 
    $\eps=\tilde{\eps}_j$, to obtain a number $\delta$. Define 
    $\delta_j(\T_{j-1},U_{j-1}(\T))\defeq \min(\delta,2^{-j})$. 
    
  Now suppose that $\T_{j}$ is a finite triangle extending $\T_{j-1}$ and 
    satisfying~\eqref{eqn:phclose} and~\eqref{eqn:fclose}. Then
    the maps $f_{n,j}$ satisfy the conditions in Lemma~\ref{lem:theta-n}, and hence
    there exist conformal isomorphisms $\theta_{n,j}$ defined on $V_{n,j}$ satisfying properties~\ref{item:thetanrelation}--\ref{item:thetanclose} of that lemma. By choice of $\eta$, for each $\theta_{n,j}$ there is 
    a $\calC^{\infty}$ diffeomorphism $\tilde{\theta}_{n,j}$ as above, and the map
    $\Theta_{n,j+1}\defeq \tilde{\theta}_{n,j}\circ \Theta_{n,j}$ has the desired properties.

  This completes the recursive definition of the functions $\delta_n$. If $\T$ is an infinite
  triangle as in the statement of the theorem, let the maps $(\Theta_{n,j})_{0\leq n\leq j < \infty}$ be
  as defined above. Then, for every fixed $n$ and every $m\geq 0$, the sequence $(\Theta_{n,j})_{j\geq n}$ forms 
  a Cauchy sequence in the $\calC^m$ topology. Hence 
  it converges to a $\calC^{\infty}$ diffeomorphisms 
  $\Theta_n$ with the desired properties.
\end{proof}

Recall that we wish to use Theorem~\ref{thm:triangle1} to construct wandering compacta of entire 
 functions with prescribed dynamics. To ensure that the boundaries of these compacta are contained in
 the Julia set, we need the following additional information.

\begin{thm}\label{thm:triangleaddendum}
    In Theorem~\ref{thm:triangle1}, assume additionally that we are given
    a sequence $(K_n)_{n=0}^{\infty}$ of compact sets $K_n\subset X_n$ such that 
    $\phi_n(K_n)\subset K_{n+1}$ for all $n\geq 0$.
     Then in addition to the functions $\delta_j$,
     there is a sequence of functions
     \[ \hat{\delta}_j \colon \Delta_{j}(\Phi) \to (0,\infty), \]
     where $j\geq 1$, with the following property. 
     
     Let $\T$ be a triangle satisfying the assumptions of
     Theorem~\ref{thm:triangle1} and let $(f_n)_{n=0}^{\infty}$ be the sequence of limit functions
     whose existence is guaranteed by the theorem. 
     For every $j\geq 0$, choose a compact
     set $Q_j$ whose $\hat{\delta}_j(\T_j)$-neighbourhood contains $K_j$. 
     Then, for every
     $n\geq 0$  and every $j_0 > n$,
     \[ \Theta_n(K_n) 
     \subset \overline{\bigcup_{j=j_0}^{\infty} (f_{j-1}\circ\dots\circ f_n)^{-1}(Q_j)}.\]
\end{thm}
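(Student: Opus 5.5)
We have to show that every point $w=\Theta_n(z)$, $z\in K_n$, is a limit of points whose orbit under the limit maps lands, at some time $j\geq j_0$, in $Q_j$. The plan is to run Lemma~\ref{lem:preimages-points} along the finite composition $f_{j-1}\circ\dots\circ f_n$ at the stage $j$ of the construction, and let $j\to\infty$. Fix $n$ and $j_0>n$. Since the closure of a union over $j\geq j_0$ contains the closure of the union over $j\geq j_1$ for $j_1\geq j_0$, it suffices to find, for every $\varepsilon>0$ and arbitrarily large $j$, a point of $(f_{j-1}\circ\dots\circ f_n)^{-1}(Q_j)$ within $\varepsilon$ of $w$. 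I take $\varepsilon=2^{-j}$ (or $\eps_n2^{-j}$) and work at stage $j$.

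The definition of $\hat\delta_j(\T_j)$ uses only $\T_j$ and the maps $\Theta_{n,j}$ built from $\T_j$ in the proof of Theorem~\ref{thm:triangle1}. On the compact set $\Theta_{n,j}(K_n)\subset X_{n,j}$, consider the composition
\[ G_{n,j}\defeq f_{j-1,j}\circ\dots\circ f_{n,j}. \]
By property~\ref{item:Thetarelation} in that proof, $G_{n,j}\circ\Theta_{n,j}=\Theta_{j,j}\circ\phi_{j-1}\circ\dots\circ\phi_n=\phi_{j-1}\circ\dots\circ\phi_n$. Hence $G_{n,j}(\Theta_{n,j}(K_n))\subset K_j$. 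The map $G_{n,j}$ is holomorphic and nonconstant near this set, since it is conjugate to a composition of the $\phi_k$, which are nowhere locally constant on the relevant components. We may assume this, or else treat locally constant pieces separately. Apply Lemma~\ref{lem:preimages-points} to $G_{n,j}$, with $X=\Theta_{n,j}(K_n)$ and tolerance $2^{-j}$, for each $n<j$. This yields $\delta'_{n,j}$, and we set $\hat\delta_j(\T_j)\defeq\min_{n<j}\delta'_{n,j}/2$.

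The second ingredient is closeness of the limit maps to the stage-$j$ maps. Lemma~\ref{lem:preimages-points} tolerates replacing $G_{n,j}$ by any holomorphic $\tilde G$ that is $\delta'_{n,j}$-close on a neighbourhood. We need $F_{n,j}\defeq f_{j-1}\circ\dots\circ f_n$ to be that close to $G_{n,j}$. Each $f_k$ differs from $f_{k,j}$ by at most $\sum_{i>j}\delta_i$. By Lemma~\ref{lem:approx-iterates}, this is enough provided the later $\delta_i$ are small relative to a threshold determined by $\T_j$. The $\delta_i$ for $i>j$ are functions of $\T_{i-1}\supset\T_j$, so we may further shrink the functions $\delta_{i}$ in Theorem~\ref{thm:triangle1}: require $\delta_{i}\leq 2^{j-i}\tau_j(\T_j)$ for all $j<i$, where $\tau_j$ is the approximation threshold. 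This recursive modification is harmless, since shrinking $\delta$'s preserves the conclusions of Theorem~\ref{thm:triangle1}. Then $Q_j$ is within $\hat\delta_j\leq\delta'_{n,j}$ of $K_j\supset G_{n,j}(\Theta_{n,j}(K_n))$, so the lemma gives a point of $F_{n,j}^{-1}(Q_j)$ within $2^{-j}$ of $\Theta_{n,j}(z)$.

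Finally, property 5 gives $|\Theta_{n,j}(z)-\Theta_n(z)|\leq\eps_n2^{-j}$. So $w=\Theta_n(z)$ is within $2^{1-j}(1+\eps_n)$ of $F_{n,j}^{-1}(Q_j)$ for every $j>n$, and letting $j\to\infty$ with $j\geq j_0$ proves the inclusion. I expect the main obstacle to be the bookkeeping in the second step. $\hat\delta_j$ must depend only on $\T_j$, while the closeness of $F_{n,j}$ to $G_{n,j}$ depends on all future steps. This forces the retroactive shrinking of the later $\delta_i$, and one must check that the domains of definition are compatible, so that $F_{n,j}$ is defined on a fixed neighbourhood of $\Theta_{n,j}(K_n)$. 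A secondary issue is non-constancy of $G_{n,j}$ on components where some $\phi_k$ might be constant, which I would exclude by assumption or handle by noting that the lemma is only needed near points with open images.
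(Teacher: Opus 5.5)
Your proposal is correct and follows essentially the same route as the paper. Like the paper, you apply Lemma~\ref{lem:preimages-points} to the stage-$j$ composition $f_{j-1,j}\circ\dots\circ f_{n,j}$ on $\Theta_{n,j}(K_n)$ to define $\hat\delta_j(\T_j)$, shrink the later $\delta_{j'}$ retroactively via Lemma~\ref{lem:approx-iterates} so that the limit composition stays within that tolerance, and let $j\to\infty$ using $\Theta_{n,j}\to\Theta_n$. Your geometric bound $\delta_i\le 2^{j-i}\tau_j(\T_j)$ and your explicit check that $f_{j-1,j}\circ\dots\circ f_{n,j}(\Theta_{n,j}(K_n))\subset K_j$ just make precise two steps the paper leaves implicit.
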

\begin{proof}
 We may assume without loss of generality that $\eps_n\to 0$ as $n\to\infty$, 
 where $(\varepsilon_n)_{n=0}^{\infty}$ is the sequence from
  Theorem~\ref{thm:triangle1}. 
  We continue with the notation from the proof of that theorem.

  Fix $j\geq 1$. 
   For $n=0,\dots,j-1$, apply Lemma~\ref{lem:preimages-points} to the composition $F_{n,j}\defeq f_{j-1,j}\circ \dots \circ f_{n,j}\colon V_{n,j}\to \C$, the set $K_{n,j}\defeq \Theta_{n,j}(K_n)$
   and 
   $\eps\defeq 1/j$. We obtain
  a number $\hat{\delta}_{n,j}$ and define 
    \[ \hat{\delta}_j\defeq \min_{0\leq n \leq j-1} \hat{\delta}_{n,j}. \]
  Recall that the sets $V_{n,j}$ depend only on $\T_j$, and hence $\hat{\delta}_j$ is indeed a function
  on $\Delta_j(\Phi)$. 
  
  We also require that for $j'>j$, the numbers $\delta_{j'}$ in the proof of Theorem~\ref{thm:triangle1} are
   chosen sufficiently small to ensure that 
      \begin{equation}\label{eqn:compositionclose} \| f_{j-1,j'} \circ \dots \circ f_{n,j'} - F_{n,j}\| < \hat{\delta}_j \end{equation}
      on $V_{n,j}$ for all $n\leq j-1$ and all $j'> j$ when the 
      triangle $\T$ satisfies~\eqref{eqn:phclose} and~\eqref{eqn:fclose}. This is possible by Lemma \ref{lem:approx-iterates}.

  Now suppose that $(Q_j)_{j=j_0}^{\infty}$ is a set as in the statement of the theorem, and let
   $n\geq 0$.  By~\eqref{eqn:compositionclose}, the map 
   \[ \tilde{F}_{n,j}\defeq f_{j-1}\circ \dots \circ f_n \] 
   satisfies $\| \tilde{F}_{n,j} - F_{n,j}\| \leq \hat{\delta}_j$ on $V_{n,j}$ for $n\leq j-1$. Therefore, by Lemma~\ref{lem:preimages-points},
   the $1/j$-neighbourhood of $P_j\defeq \tilde{F}_{n,j}^{-1}(Q_j)$ contains 
   $K_{n,j}$. Since $K_{n,j}\to \Theta_n(K_n)$ 
   as $j\to\infty$, the claim follows.
\end{proof}

In applications of Theorems~\ref{thm:triangle1} and~\ref{thm:triangleaddendum},
 the functions
 in the triangle $\T$ are typically constructed recursively. In some cases, one may not wish to specify all
 functions $\phi_j$, sets $X_j$ or the numbers $M_j$ and $\eps_j$ in advance of the whole construction, but
 rather to choose these, too, in a recursive manner. Therefore we note the following. 
\begin{prop}
 In Theorem~\ref{thm:triangle1}, for each $j\geq 1$ the function $\delta_j$ depends on  $(\phi_k)_{k=0}^{j-1}$, $(X_k)_{k=0}^{j-1}$, 
 $(C_k)_{k=0}^{j-1}$, $(\eps_k)_{k=0}^{j-1}$ and $(M_k)_{k=0}^{j-1}$, 
  but not on the values of these sequences for other values of $k$.
  In Theorem~\ref{thm:triangleaddendum}, the function $\hat{\delta}_j$ depends 
  additionally on $(K_k)_{k=0}^{j}$. 
\end{prop}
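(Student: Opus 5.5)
The statement is a bookkeeping claim about the recursive construction in the proofs of Theorems~\ref{thm:triangle1} and~\ref{thm:triangleaddendum}. I will prove it by induction on $j$. The quantity tracked is the set of data on which the construction at step $j$ depends. This covers $\delta_j$, together with the auxiliary objects built alongside it: the diffeomorphisms $(\Theta_{n,j-1})_{n\le j-1}$, the sets $X_{n,j-1}=\Theta_{n,j-1}(X_n)$, the neighbourhoods $V_{n,j-1}$, and the numbers $\eta_{n,j}$.

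The inductive hypothesis is the following. As a function of a finite triangle $\T_{j-1}$, the maps $\Theta_{n,j-1}$ depend only on $(\phi_k,X_k,C_k,\eps_k,M_k)_{k\le j-2}$ and on $\T_{j-1}$ itself. The base case is immediate, since $\Theta_{0,0}=\id$.

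For the inductive step I go through the definition of $\delta_j(\T_{j-1},U_{j-1})$ in the order the proof constructs things.
\begin{enumerate}[(i)]
\item The maps $g_n$ are $f_{n,j-1}$ for $n<j-1$ (these are entries of $\T_{j-1}$) together with $\phi_{j-1}$.
\item The sets $X_{n,j-1}$ involve $\Theta_{n,j-1}$ and $X_n$ for $n\le j-1$.
\item The $V_n$ are chosen backwards from a neighbourhood of $X_{j-1}$ inside $U_{j-1}$. They use only the $g_n$, the $X_{n,j-1}$, and the requirement that critical points lie in $C_n$ for $n\le j-1$.
\item Corollary~\ref{cor:Cmextension} is applied with $m_{n,j}=\max(M_n,j)$ and $\eps_n/2^j$, for $n\le j-1$.
\item Lemma~\ref{lem:theta-n} is applied with $\ell=j-1$, using only $g_0,\dots,g_{j-1}$, $V_0,\dots,V_{j-1}$ and $C_0,\dots,C_{j-1}$.
\end{enumerate}
Every input to these steps has index at most $j-1$, so $\delta_j$ depends on nothing beyond the listed data.

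To close the induction, the new maps $\Theta_{n,j}=\tilde\theta_{n,j-1}\circ\Theta_{n,j-1}$ for $n\le j-1$, together with $\Theta_{j,j}=\id$, must depend only on the data of index at most $j-1$ and on $\T_j$. This is needed because $\Theta_{j,j}$ is trivial, so neither $X_j$ nor $\phi_j$ enters at this stage.

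For $\hat\delta_j$ I repeat the check. It is defined through Lemma~\ref{lem:preimages-points} applied to $F_{n,j}=f_{j-1,j}\circ\dots\circ f_{n,j}$ on $V_{n,j}$, with the compact set $K_{n,j}=\Theta_{n,j}(K_n)$. Here $n\le j-1$, but the sets $V_{n,j}$ are built at the next step using $X_j\supset K_j$. The additional dependence is therefore on $(K_k)_{k\le j}$; where the construction refers to $X_j$ or $U_j$, this amounts to the data at index $j$ entering only through these sets.

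The one delicate point is the extra smallness requirement~\eqref{eqn:compositionclose} imposed on the later $\delta_{j'}$. It must be satisfiable using only data of index at most $j'-1$. I will handle this by defining $\delta_{j'}$ as a minimum over $j<j'$ of constraints coming from Lemma~\ref{lem:approx-iterates}. These constraints involve only $\T_{j'-1}$ and the previously fixed $\hat\delta_j$ with $j<j'$, hence only $K_k$ for $k\le j'-1$. With that arrangement the claimed dependence holds.
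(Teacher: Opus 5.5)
Your proposal is correct and follows the paper's own argument, which likewise just inspects the recursive construction in the proofs of Theorems~\ref{thm:triangle1} and~\ref{thm:triangleaddendum}. Your induction on $j$, and your explicit check that the extra smallness condition~\eqref{eqn:compositionclose} imposed on later $\delta_{j'}$ uses only data of index at most $j'-1$, make this more careful than the paper's two-line proof.

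The one step to tighten is the inference from $X_j\supset K_j$ to a dependence on $K_j$. A dependence of $V_{n,j}$ on $X_j$ (or on $U_j$, which is not even an entry of $\T_j$) is not a dependence on $K_j$. The clean repair is to choose the sets $V_{n,j}$, $n\le j-1$, used for $\hat\delta_j$ backwards from a neighbourhood of $X_{j-1,j}$ compactly contained in $U_{j-1}$. Then $K_j$ enters only through the choice of $Q_j$, since $F_{n,j}(K_{n,j})\subset K_j$.
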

\begin{proof}
 The first claim follows easily by inspecting the proof. 

 For the second, note that $\hat{\delta}_j$ depends on the sets $V_{n,j}$ for
   $n\leq j-1$,  the maps $\Theta_{n,j}$ for $j\leq j-1$, the sets
   $X_{n,j}=\Theta_{n,j}(X_n)$ for $j\leq j-1$, and, of course, on $X_j$. Again, inspecting the
   proof of Theorem~\ref{thm:triangle1} shows that these objects are independent of the input data
   at indices above $j-1$. 
\end{proof}

\section{Proofs of Theorems \ref{thm:Jordan-continua merged}, \ref{thm:Jordan-continua merged-oscillating} and \ref{thm:entire}}
\label{sec:main-proof}

\begin{proof}[Proof of Theorem~\ref{thm:Jordan-continua merged}]
We construct the transcendental meromorphic function $f$ as the limit of a sequence of rational maps $(f_j)_{j=0}^\infty$ that are defined inductively using Runge's theorem (see Theorem~\ref{thm:runge}).  More precisely, for $j\geq 1$, the function $f_j$ approximates a function $g_j$, defined and meromorphic on a neighbourhood of a compact set $A_j\subseteq \C$, up to an error of at most $\varepsilon_j>0$. The function $g_j$ in turn is defined in terms of the previous function $f_{j-1}$ and $\varphi$.  The sets $(A_j)$ where we perform the approximation satisfy  $A_j\subseteq A_{j+1}$ for $j\geq 1$ and their union is $\C$. This means that, assuming that $f_j$ and $f_{j-1}$ are sufficiently close on $A_j$ (in particular, $f_j$ and $f_{j-1}$ have the same poles in $A_j$), in the limit we obtain a meromorphic function $f\colon\C\to\widehat{\C}$. Note that since the sets $(X_n)$ are bounded, $\theta(X_n)$ never contains any poles, but $f$ may have poles close to $\theta(X_n)$ for all $n \geq 0$; we treat the entire case at the end of the proof.

 For each $n\geq 0$, choose a bounded open neighbourhood $W_n$ of $X_n$ on which $\phi$ is defined. 
   By the assumption on the $X_n$ we can choose $W_n$ so that
   their closures  are pairwise disjoint and tend uniformly to infinity as $n\to\infty$. 
   We may also suppose that $W_n$ contains no critical points of $\phi$ outside
   $X$, and that $\CP(\phi)\cap X_n\subset C_n$ and $\phi(C_n)\subset C_{n+1}$ for all
   $n\geq 0$.

Let $D=D(c,\rho)$ be a disc whose closure is disjoint from the closures of the $W_n$ and define $f_0(z)\defeq c$. 
 Choose an increasing sequence $(\Lambda_n)_{n=0}^{\infty}$ of compact sets such that 
   $\overline{D}\subset \Lambda_0$, $W_j\subset \Lambda_n$ for $j<n$ and $\overline{W_j}\cap \Lambda_n=\emptyset$ for $j\geq n$, and $\bigcup_{n=0}^\infty \Lambda_n = \C$ . (Note that such a sequence $(\Lambda_n)_{n=0}^{\infty}$ exists by choice of the $W_n$.)   

Let $\Phi = \bigl((X_n)_{n=0}^{\infty},(\phi_n)_{n=0}^{\infty}\bigr)$ be the non-autonomous system defined 
  by $\phi_n=\phi|_{W_n}$ for all $n$. Then $\Phi$ satisfies the hypotheses of Theorems~\ref{thm:triangle1} and~\ref{thm:triangleaddendum}, 
  where $K_n=\partial X_n$. Let  $\delta_n$ and $\hat{\delta}_n$ be the functions whose existence
  is guaranteed by these theorems. (Here the sequence $\eps_n$ is as given in the statement of the theorem,
  and we may take $M_n=0$ for all $n\geq 0$.)

We construct the rational maps $(f_j)_{j=1}^{\infty}$ recursively together with open sets $(U_n)_{n=0}^{\infty}$, in such a way that the 
 restrictions $f_{n,j}\defeq f_j|_{U_n}$, for $n<j$, form an infinite triangle $\T$ in the sense of
 Definition~\ref{defn:triangles}
  and satisfying conditions~\eqref{eqn:phclose} and~\ref{eqn:compositionclose} from Theorem~\ref{thm:triangle1}. Furthermore, $|f_{j}-f_{j-1}| \leq \rho/2^{j+1}$
 on $\Lambda_{j-1}$. In particular, every $f_j$ maps $D(c,\rho)$ into $D(c,\rho/2)$. 

Let $j\geq 1$. Assume that $f_k$ is defined for $k< j$, that $U_n$ is defined for $n<j-1$,
 and that the maps $f_{n,k} = f_k|_{U_n}$, where $0\leq n < k \leq j-1$, 
 define a finite triangle $\T_{j-1}$. Set
 $\hat{\delta}\defeq \hat{\delta}_{j-1}(\T_{j-1})$. Choose 
 a compact set $Q_{j-1}\subset W_{j-1}\setminus X_{j-1}$ such that the $\hat{\delta}$-neighbourhood of 
 $Q_{j-1}$ contains $\partial X_{j-1}$. Let $R_{j-1}$ be a compact neighbourhood of $Q_{j-1}$ contained
 in $W_{j-1}$ and disjoint from $X_{j-1}$. Finally, choose $U_{j-1}\subset W_{j-1}$ to be an open
 neighbourhood of $X_{j-1}$ such that $\overline{U_{j-1}} \cap R_{j-1} = \emptyset$. 

 Define 
   \begin{equation}\label{eqn:Ajescaping} A_j \defeq \Lambda_{j-1} \cup \overline{U_{j-1}} \cup R_{j-1}\end{equation}
   and
     \[ g_j(z) \defeq \begin{cases} f_{j-1}(z) &\text{if }z\in \Lambda_{j-1} \\
                                    \phi_{j-1}(z) &\text{if }z\in \overline{U_{j-1}} \\
                                    c &\text{if }z\in R_{j-1}. \end{cases}\]
    Observe that the three sets in the definition of $A_j$ are compact and pairwise disjoint, and that
     $g_j$ is meromorphic in a neighbourhood of $A_j$. 

 Set 
   \[ \delta \defeq \min\bigl(2^{-(j+1)}\rho , \delta_n(\T_{j-1},U_{j-1})\bigr). \]
  Let $(C_n)_{n=0}^{\infty}$ be the sets defined in Definition~\ref{defn:triangles}. By Theorem \ref{thm:runge} we can choose
  $f_j$ to be a rational function that is $\delta$-close to $g_j$ on $A_j$, and that
  takes the same values and has the same local degree on the points of $\bigcup_{k=0}^{j-1} C_k$ as $g_j$. In particular, $\T_j$ is a triangle of level $j$, as required by the inductive hypothesis.

This completes the recursive construction. By construction, the infinite
 triangle $(f_{n,j})_{0\leq n < j<\infty}$ satisfies the hypotheses of Theorems~\ref{thm:triangle1}
 and~\ref{thm:triangleaddendum}.
 Moreover, the functions $f_j$ converge to a global meromorphic function $f$ as $j\to\infty$. 

 Let $(\Theta_n)$ be the functions from Theorem~\ref{thm:triangle1}, and define
   $\theta\colon\C\to\C$ to agree with $\Theta_n$ on each $U_n$, and to be the identity outside
   the union of the $U_n$. Then~\ref{item:functionalrelation}, \ref{item:conjugacyclose} and~\ref{item:conjugacyconformal} follow from the conclusion of Theorem~\ref{thm:triangle1}. Furthermore,
   $f(Q_n)\subset D$ for all $n\geq 0$, and $f(D)\subset D$. So all points in $Q_n$ have bounded orbits. 
   The union of the backward orbits of all sets $Q_n$ accumulate on $\partial \theta(X)$ by
   Theorem~\ref{thm:triangleaddendum}. On the other hand, $\theta(X)\subset I(f)$. We conclude that
   $\partial \theta(X)\subset J(f)$, as claimed in~\ref{item:boundaryJulia}. This completes the
   proof in the meromorphic case. 
\end{proof}

\begin{rmk}
 Note that the assumption that $\phi(\partial X_n)\subset \partial X_{n+1}$ was only
  used to ensure that $K_n\defeq \partial X_n$ satisfy the hypotheses of
  Theorem~\ref{thm:triangleaddendum}. Without this assumption, we can instead 
  set 
   \[ K_n\defeq \bigl\{z\in \partial X_n\colon \phi_{\ell}(\phi_{\ell-1}(\cdots(\phi_n(z))\cdots))\in \partial X_{\ell}
       \text{ for all $\ell>n$}\bigr\}\]
 and the proof goes through verbatim. This proves the existence of
 a meromorphic function $f$ satisfying conclusions~\ref{item:functionalrelation},~\ref{item:conjugacyclose} and~\ref{item:conjugacyconformal} of Theorem~\ref{thm:main-escaping}, while~\ref{item:boundaryJulia} should be replaced by 
 \begin{enumerate}
   \item[(c')] $\Theta(K_n)\subset J(f)$ for all $n\geq 0$.
  \end{enumerate}
\end{rmk}

Similarly, we can apply Theorem~\ref{thm:triangle1} to prove Theorem~\ref{thm:Jordan-continua merged-oscillating}.

\begin{proof}[Proof of Theorem~\ref{thm:Jordan-continua merged-oscillating}]
The proof is similar to the proof of Theorem~\ref{thm:main-escaping}. In particular,
the overall outline of the construction is the same: we recursively construct a sequence 
$(f_j)_{j=0}^{\infty}$ of rational maps, each of which approximates a function
$g_j$ that is defined in terms of the previous steps of the construction. The non-autonomous system $\Phi$ is defined in the same way as in the proof of Theorem~\ref{thm:main-escaping}, but now the
triangle $\T$ is given by the maps 
$f_{n,j}\defeq f_j^{k_{n}}|_{U_n}$. (As before, $U_n$ is a neighbourhood of $X_n$ that
is chosen inductively as part of the construction.) The sequence $(k_n)_{n=0}^\infty$ is given by 
\[ k_n \defeq n+2,\quad \textup{for } n\geq 0.\]

Choose the open sets $(W_n)_{n=0}^{\infty}$, the disc $D$ and  the map 
 $f_0$ as in the proof of Theorem~\ref{thm:main-escaping}. Also let
 $(D_n)_{n=0}^\infty$ be a collection of discs whose closures are disjoint from $\overline{D}$ and the closures of the sets $W_n$, for $n\geq 0$, and such that $\inf_{z\in D_n} |z|\to\infty$ as $n\to\infty$. Also, for each $n\geq 0$, 
choose two smaller subdiscs $\Omega_n,\Sigma_n$ whose closures are 
disjoint from each other and contained in $D_n$. Let $\omega_n$ and $\sigma_n$
  be affine maps such that   
 \[ \overline{W_{n+1}}\subset \omega_n(\Omega_n)\quad\text{and}\quad  
    \overline{D_{n+1}}\subset \sigma_n(\Sigma_n). \]
 Furthermore, choose $\alpha_n>0$ be so small that the following holds. 
 Suppose that $f$ is a holomorphic function on $\Omega_n\cup \Sigma_n$ such that
 $\lvert f - \omega_n\rvert \leq \alpha_n$ on $\Omega_n$ and
 $\lvert f - \sigma_n\rvert \leq \alpha_n$ on $\Sigma_n$.
 Then $\Omega_n$ contains a simply connected domain that is 
  mapped conformally onto $W_{n+1}$ by $f$, and similarly for $\Sigma_n$ and
 $D_{n+1}$. This is possible by Lemma~\ref{lem:univ-approx}.

In addition to the conditions in the proof of Theorem~\ref{thm:main-escaping},
  the compact sets $(\Lambda_n)_{n=0}^\infty$ are chosen such that $D_j\subset \Lambda_n$ for
  $j<n$ and $\overline{D_j}\cap \Lambda_n=0$ for $j\geq n$. As before, the maps
  $(f_j)_{j=0}^\infty$ and domains $(U_n)_{n=0}^\infty$ are chosen such that the maps $f_{n,j}$ defined above
  form a triangle satisfying~\eqref{eqn:phclose}~and~\eqref{eqn:fclose},
  and such that $f_j$ maps $D$ into itself. Moreover, %$f_j$ will be $\alpha_k\cdot (1-2^{-j})$-close to $\omega_k$ and $\sigma_k$ for $k< j$.
  $|f_j- \omega_k|< \alpha_k\cdot (1-2^{-j})$ and $|f_j- \sigma_k|< \alpha_k\cdot (1-2^{-j})$ for $k< j$.

 We now describe the recursive construction of $f_{j}$ from $f_{j-1}$. The sets
   $Q_{j-1}$, $R_{j-1}$ and $U_{j-1}$ are defined as before, but
   we also require that $\phi_{j-1}(\overline{U_{j-1}}) \subset W_j$. The set $A_j$,
   on which the approximation is to take place, is
   \begin{equation}\label{eqn:Ajoscillating} A_j\defeq \Lambda_{j-1} \cup \overline{U_{j-1}} \cup R_{j-1}
   \cup\overline{\Omega_{j-1}}
        \cup \overline{\Sigma_{j-1}}.\end{equation}

   By the inductive hypothesis on $f_{j-1}$ and choice of $\alpha_0,\dots,\alpha_{j-1}$,
     there is a branch $\beta$ of $f_{j-1}^{-(j-1)}$ defined on $D_n$ and taking
     values in $D_0$. (Here $\beta=\id$ when $j=1$.) We define 
       \[ g_j(z) \defeq 
          \begin{cases}  f_{j-1}(z) & \text{if }z\in \Lambda_{j-1} \\
              \beta(\omega_{j-1}^{-1}(\phi_{j-1}(z))) 
                                &\text{if } z\in \overline{U_{j-1}} \\
              c &\text{if } z\in R_{j-1} \\
              \omega_{j-1}(z) &\text{if } z\in \overline{\Omega_{j-1}} \\
              \sigma_{j-1}(z) &\text{if } z\in \overline{\Sigma_{j-1}}. 
          \end{cases}\]

Observe that $g_j^{k_j} = \phi_{j-1}$ on $\overline{U_{j-1}}$, and that
      $g_j$ extends holomorphically to a neighbourhood of $A_j$. Hence, we may choose
      $\delta>0$ and apply Theorem~\ref{thm:runge} to choose a rational function $f_j$
      %that is $\delta$-close to $g_j$ 
      such that $|f_j-g_j|<\delta$ on $A_j$ and has the same values 
      and degree at all points of the finite set 
       \[ \bigcup_{m=0}^{j-1} \Bigl\{g_j^{\ell}(c)\colon c\in C_m, 0\leq \ell <
              \sum_{q=m}^{j-1} k_{q} \Bigr\}.\]
    The number $\delta$ should be chosen sufficiently small to ensure that 
       \[
             \delta \leq \min\Bigl(2^{-(j+1)}\rho , \delta_j(\T_{j-1},U_{j-1}),\min_{m<j} \frac{\alpha_m}{2^{j}}\Bigr), 
        \]
      and such that $\lvert f_j^{n-j} - \phi_{j-1}\rvert < \delta_j(\T_{j-1},U_{j-1}))$
      when $f_j$ is as above. (The last choice is possible by Lemma~\ref{lem:approx-iterates}.)

    This completes the recursive construction. As before, the functions $f_j$ converge
     to a meromorphic function $f$ with the desired properties.        
\end{proof}

\begin{rmk}\label{rmk:oscillating-shrinking}
    In Theorem~\ref{thm:Jordan-continua merged-oscillating}, 
    we may ensure additionally that 
    \begin{equation}
      \label{eqn:oscillating-shrinking}
      \lim_{n\to\infty} \max_{0<k<k_n} \diam f^k(\theta(X_n)) = 0. 
    \end{equation}
    (This  will be important in the proof of Corollary~\ref{cor:Jordan}.)

   Indeed, in the proof of the theorem, we may 
     choose the discs $D_n$ so that their Euclidean diameter
    tends to zero as $n\to\infty$. We may also assume that
    $\lvert \psi_n'\rvert\geq 2$ for all $n\geq 0$, and that
    the $\alpha_n$ are chosen so small that the approximating function $f$
    satisfies $\lvert f'\rvert \geq 1$ on
    $\overline{\Sigma_n}$.

    Suppose that we do so. Recall that, for $0<k<k_n-1$, the set 
    $f^k(\theta(X_n))$ is contained in a connected component
    of $f^{-1}(D_k)$ that is mapped univalently onto $D_k$ and is contained in
    $\Sigma_{k-1}$. Our assumption means that $\lvert f'\rvert \geq 1$ on this component,
    and we conclude inductively that
      \[ \diam f^k(\theta(X_n)) \leq \diam D_{k_n-2} \]
    for every $n\geq 0$ and $0<k<k_n$. By assumption $\diam D_{k_n-2}=\diam D_{n}\to 0$ as $n\to\infty$,
    establishing~\eqref{eqn:oscillating-shrinking}.
\end{rmk}

To prove Theorem~\ref{thm:entire}, we use the following fact, which follows
 from standard separation theorems.
\begin{prop}\label{prop:topology}
  Suppose that $(K_n)_{n=0}^{\infty}$ is a sequence of pairwise disjoint
   non-empty and non-separating compact sets 
   $K_n\subset \C$ such that $\min_{z\in K_n}\lvert z\rvert\to\infty$
   as $n\to\infty$. 

  Then there exists an increasing sequence $(\Lambda_n)_{n=0}^{\infty}$ of closed bounded
  Jordan domains such that $K_m\subset \Lambda_n$ for $m<n$, $K_m\cap \Lambda_n=\emptyset$ for
  $m\geq n$, and such that $\C = \bigcup_{n=0}^{\infty} \Lambda_n$.   
\end{prop}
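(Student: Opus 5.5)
We build the sets $\Lambda_n$ one at a time by induction, each as a closed Jordan domain.

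\emph{Preliminary reduction.} Set $L_m\defeq \bigcup_{k\geq m} K_k$. This set is closed, because the $K_k$ are compact and tend to $\infty$. Moreover, $\C\setminus L_m$ is connected. To see this, take two points $a,b$ in the complement. Only finitely many $K_k$ meet a large disc $\overline{D(0,R)}$ containing $a$ and $b$. A finite union of pairwise disjoint non-separating compacta is non-separating; this is the standard Janiszewski-type separation theorem, applied inductively to disjoint compact sets whose union with a point does not separate. Hence $a$ can be joined to $\infty$, i.e.\ to $\{\lvert z\rvert = R'\}$ for some $R'>R$, avoiding the finitely many $K_k$ that meet $\overline{D(0,R')}$. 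The same holds for $b$. We may also choose $R'$ so that the circle $\{\lvert z\rvert=R'\}$ itself avoids $L_m$: since $\min_{K_k}\lvert z\rvert\to\infty$, only finitely many $K_k$ meet any given annulus, and so some circle misses them all. This gives a path from $a$ to $b$ in $\C\setminus L_m$.

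\emph{Inductive step.} Suppose $\Lambda_{n-1}$ has been constructed; put $\Lambda_{-1}\defeq\emptyset$. We want a closed Jordan domain $\Lambda_n$ that contains $\Lambda_{n-1}\cup K_{n-1}\cup \overline{D(0,n)}$ in its interior and is disjoint from $L_n$. We arrange the induction so that $\Lambda_{n-1}\cap L_{n-1}=\emptyset$ and $\Lambda_{n-1}\subset \operatorname{int}\Lambda_n$.

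Let $B\defeq \Lambda_{n-1}\cup K_{n-1}\cup\overline{D(0,n)}$. This set may meet $L_n$ inside the disc, so we do not insist on literally containing $\overline{D(0,n)}$. Instead we use exhaustion: choose $r_n\to\infty$ such that $\{\lvert z\rvert=r_n\}$ is disjoint from $L_n$, and replace the disc by $\overline{D(0,r_n)}\setminus U$, where $U$ is a small neighbourhood of $L_n\cap\overline{D(0,r_n)}$. This requires care; the cleaner route is the following.

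Let $F\defeq L_n\cap \overline{D(0,r)}$ for a large radius $r$ with $F\cap\{\lvert z\rvert=r\}=\emptyset$. The set $F$ is a finite union of non-separating compacta. Take a compact set $P\supset \Lambda_{n-1}\cup K_{n-1}$ inside $\overline{D(0,r)}\setminus F$ that is connected, contains the annulus $\{r-\eta\leq\lvert z\rvert\leq r\}$, and contains everything of $\overline{D(0,r)}$ at distance $\geq\eta$ from $F$. Its complement in $\overline{D(0,r)}$ then consists of small neighbourhoods of the components of $F$. Since $F$ is non-separating, each such component neighbourhood can be taken to be a topological disc. Concretely, cover $F$ by a grid of squares of side $\eta$; the union of the squares meeting $F$ is a finite polyomino, and we fill its holes. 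The holes are disjoint from $F$ for small $\eta$, because $F$ does not separate; this is the step that needs the separation theorem, and it is the main obstacle.

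The compact set $\overline{D(0,r)}$ minus the interiors of these disc-like pieces is then a closed disc with finitely many disjoint closed holes. We connect each hole to the outer boundary by thin disjoint corridors (strips) that avoid $\Lambda_{n-1}\cup K_{n-1}$. This is possible because the complement of $\Lambda_{n-1}\cup K_{n-1}\cup F$ is connected: $\Lambda_{n-1}$ and $K_{n-1}$ are full. Removing the corridors yields a closed Jordan domain $\Lambda_n$ that contains $\Lambda_{n-1}\cup K_{n-1}$ and misses $L_n$.

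\emph{Conclusion.} Choosing $\eta\to0$ and $r=r_n\to\infty$ along the induction, every point $z\notin \bigcup_k K_k$ eventually lies in $\Lambda_n$. Points of $K_m$ lie in $\Lambda_n$ for all $n>m$. Hence $\bigcup_n\Lambda_n=\C$. Finally, $K_m\cap\Lambda_n=\emptyset$ for $m\geq n$ holds by construction, since $\Lambda_n\cap L_n=\emptyset$.
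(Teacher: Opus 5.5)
The paper gives no argument for this proposition beyond an appeal to standard separation theorems, so your proposal has to stand on its own. As written it has two genuine gaps.

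\textbf{First gap: the circles avoiding $L_m$.} You claim that some circle $\{\lvert z\rvert=R'\}$ misses $L_m$ ``because only finitely many $K_k$ meet a given annulus''. This is false, since finitely many compacta can meet every circle of an annulus. For instance, the segments $K_k\defeq\{te^{ik}\colon k\leq t\leq k+1\}$, $k\geq 0$, are pairwise disjoint, non-separating and tend to $\infty$, yet every circle of radius $r\geq n$ meets $K_{\lfloor r\rfloor}\subset L_n$.

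Your inductive step needs a radius $r$ with $\Lambda_{n-1}\cup K_{n-1}\subset\overline{D(0,r)}$ (so $r\geq n$ in this example) and $\{\lvert z\rvert=r\}\cap L_n=\emptyset$. In this example no such $r$ exists, so the construction cannot even start. The same false claim invalidates your argument that $\C\setminus L_m$ is connected, although that conclusion happens to be true.

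If you allow the circle to meet $L_n$, then $F$ is no longer a finite union of whole sets $K_k$ lying in the open disc. Pieces of $L_n$ can run across $\overline{D(0,r)}$ and separate $\Lambda_{n-1}$ from $K_{n-1}$ inside it, so the picture of a closed disc with finitely many interior holes is lost. What is needed is a Jordan curve avoiding $L_n$ and surrounding a large disc. Such curves exist (e.g.\ by Zoretti's theorem), but that takes a real separation argument, not the annulus count you give.

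\textbf{Second gap: exhaustion.} More seriously, $\bigcup_n\Lambda_n=\C$ does not follow from $\eta\to 0$ and $r_n\to\infty$. These conditions keep the \emph{holes} away from a fixed $z\notin\bigcup_k K_k$. The \emph{corridors} at stage $n$, however, are only required to avoid $\Lambda_{n-1}\cup K_{n-1}$. Nothing stops you from routing one of them through a fixed point $z_0\notin\Lambda_{n-1}$ at every stage, and then $z_0\notin\bigcup_n\Lambda_n$.

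The obvious patch is to demand that stage-$n$ corridors avoid a disc $\overline{D(0,\rho_n)}$ with $\rho_n\to\infty$. This is not automatically feasible, even when $\rho_n<\dist(0,L_n)$:
\begin{itemize}
\item $K_{n-2}\subset\Lambda_{n-1}$ may be a long horseshoe around some $K_k$, $k\geq n$, with both tips in $D(0,\rho_n)$. Then $K_{n-2}\cup\overline{D(0,\rho_n)}$ separates $K_k$ from $\infty$, and no admissible corridor exists.
\item Likewise, $\Lambda_{n-1}\cup\overline{D(0,\rho_n)}$ can enclose the hole around such a $K_k$ if the stage-$(n-1)$ corridor to it passed through $D(0,\rho_n)$.
\end{itemize}

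\textbf{The missing idea.} You need a system of escape routes that is fixed once and for all and is locally finite. Concretely: pairwise disjoint full continua $\widehat K_k\supset K_k$, and pairwise disjoint proper arcs $\gamma_k$ from $\widehat K_k$ to $\infty$ avoiding the other $\widehat K_j$, such that only finitely many $\widehat K_k\cup\gamma_k$ meet any disc.

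Building these takes a diagonal argument. Each $\gamma_k$ is required to avoid a disc $\overline{D(0,\rho(k))}$, and the radius is increased only when the larger disc, together with the finitely many earlier $\widehat K_j\cup\gamma_j$ meeting it, encloses no later $\widehat K_{k'}$.

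Given such a system, set $U_n\defeq\C\setminus\bigcup_{k\geq n}(\widehat K_k\cup\gamma_k)$. These form an increasing sequence of simply connected domains exhausting $\C$, with $K_0\cup\dots\cup K_{n-1}\subset U_n$; connectedness follows from Janiszewski's theorem and local finiteness. You can then take $\Lambda_n$ to be the image of a closed disc under a Riemann map of $U_n$, chosen large enough to contain $\Lambda_{n-1}\cup K_{n-1}$ and $\{z\in\overline{D(0,n)}\colon \dist(z,\C\setminus U_n)\geq 1/n\}$. With this approach, circles avoiding $L_n$ and the corridor bookkeeping become unnecessary.
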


\begin{proof}[Proof of Theorem~\ref{thm:entire}]
 Recall that the theorem claims that, in the case where $\C\setminus X$ is connected, the 
  function obtained in either of the two preceding proofs can be chosen
  to be entire. In other words, we must show that the functions 
  $f_{j}$ can be chosen to be polynomials, which by Runge's theorem is the
  case if the sets $A_j$, defined in~\eqref{eqn:Ajescaping} and~\eqref{eqn:Ajoscillating},
   respectively, also do not separate the plane. 

  This is easy to achieve. Indeed, since $X_j$ does not separate the plane, we may choose
   the open neighbourhoods $W_j$ so that $\overline{W_j}$ does not separate
   the plane; see~\cite[Lemma~2.9]{mrw1}. The same is true for the open neighbourhood $U_{j-1}$ chosen as
   part of the recursive construction. The sets $Q_{j-1}$ may be chosen to be finite;
   $R_{j-1}$ can then be chosen to be a finite disjoint union of round closed discs. 
   The sets $\Omega_{j-1}$ and $\Sigma_{j-1}$ are chosen to be discs
       Theorem~\ref{thm:Jordan-continua merged-oscillating}.
   Finally, it follows from Proposition~\ref{prop:topology} that the sets
   $\Lambda_n$ can also be chosen so that they  do not separate the plane; indeed,
   $\Lambda_n$ may be chosen to be a closed Jordan domain.
\end{proof}

\begin{rmk}\label{rmk:Cm}
    In all of the above proofs, we have chosen $M_n=0$ for all $n\geq 0$, but we can
    also choose larger values of $M_n$. We obtain a map $\theta$ that is
    $\calC^{M_n}$-close to the identity on a neighbourhood of $X_n$, and equal to
    the identity elsewhere. Moreover, recall that we chose the approximating function
    $f_j$ to agree with $g_j$ on $\bigcup_{k=0}^{j-1} C_k$; we may furthermore choose it to
    agree with $g_n$ up to order $M_n$. Then the same is true for the approximating
    function $f$, and it follows from the proof of Theorem~\ref{thm:triangle1} that
    $\theta$ agrees with $\id$ to order $M_n$ on the points of $C_n$. This justifies
    the claims made in Remark~\ref{rmk:Cm-intro}.
\end{rmk}

\section{The univalent case}\label{sec:univalent}

In this section, we briefly discuss the proof of Theorem \ref{thm:injective}. We adapt Theorems~\ref{thm:triangle1} and~\ref{thm:triangleaddendum} to the univalent case as follows. 

\begin{thm}\label{thm:injectivetriangle}
 Let $\Phi = \bigl((X_n)_{n=0}^{\infty},(\phi_n)_{n=0}^{\infty},(C_n)_{n=0}^{\infty}\bigr)$ be a non-autonomous system, 
  satisfying the condition that $\phi_n$ is univalent with $\phi_n(X_n)=X_{n+1}$ for all $n\geq 0$.
      Let $(\varepsilon_n)_{n=0}^\infty$ be a sequence of positive real numbers and let
   $(M_n)_{n=0}^{\infty}$ be a sequence of non-negative integers. 

     Then there are sequences $(\delta_j)_{j=1}^{\infty}$ and $(\hat{\delta}_j)_{j=1}^{\infty}$
       of functions 
           \[ \delta_j  \colon \Delta_{j-1}(\Phi) \times \U_{j-1}(\Phi)\to (0,\infty)\quad\text{and}\quad
            \hat{\delta}_j \colon \Delta_{j}(\Phi) \to (0,\infty)
           \]
       with the following property. 

   Suppose that $\T=(f_{n,j})_{0\leq n<j<\infty}$ is an infinite triangle satisfying~\eqref{eqn:fclose} 
    for all $j\geq 1$, and that the following hold for every $j\geq 1$.
    \begin{enumerate}[(a)]
      \item The function 
         \[ \theta_{j-1} \defeq f_{j-2,j-1}\circ\dots \circ f_{0,j-1}\circ 
                            \phi_0^{-1}\circ \dots \circ \phi_{j-1}^{-1}\]
             is defined and univalent in a neighbourhood of $X_{j-1}$.
             (By convention, $\theta_{0}=\id$.)
       \item $\theta_{j-1}^{-1}$ is defined on $U_{j-1}(\T)$.
       \item $\lvert f_{j-1,j} - \phi_{j-1} \circ \theta_{j-1}^{-1}\rvert < 
                  \delta_{j}(\T_{j-1},U_{j-1}(\T))$.
    \end{enumerate}

  Then the conclusions of Theorems~\ref{thm:triangle1} and~\ref{thm:triangleaddendum} hold, where
  additionally we can take $\Theta_0=\id$.
\end{thm}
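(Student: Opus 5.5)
The plan is to rerun the recursive proof of Theorem~\ref{thm:triangle1} and Theorem~\ref{thm:triangleaddendum}, but with a different normalisation. The correction $\theta_{j-1}$ is put on the \emph{last} set of each row, instead of being spread backwards onto $X_0$. Univalence of the $\phi_n$ is what lets us transport everything forward. In step $j$, define
\[ \theta_{n,j}\defeq f_{n-1,j}\circ\dots\circ f_{0,j}\circ\phi_0^{-1}\circ\dots\circ\phi_{n-1}^{-1} \]
on a neighbourhood of $X_n$, for $0\le n\le j$ (so $\theta_{0,j}=\id$ and $\theta_{j,j}=\theta_j$). By construction these satisfy
\[ f_{n,j}\circ\theta_{n,j}=\theta_{n+1,j}\circ\phi_n \quad\text{on a neighbourhood of }X_n, \]
with $\Theta_{0,j}=\id$ automatically. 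The task is then to choose $\delta_j$ so that each $\theta_{n,j}$ extends to a $\calC^\infty$ diffeomorphism $\Theta_{n,j}$, and so that these form a Cauchy sequence in $\calC^{m_{n,j}}$ as $j\to\infty$.

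The key steps, in order, are as follows.
\begin{enumerate}[1.]
\item Hypotheses (a) and (b) give that $\theta_{j-1}$ is univalent near $X_{j-1}$ with inverse defined on $U_{j-1}$. Hypothesis (c) then says that $f_{j-1,j}\circ\theta_{j-1}$ is $\delta_j$-close to $\phi_{j-1}$ on a fixed neighbourhood of $X_{j-1}$. This uses a Cauchy estimate for the derivative of $\theta_{j-1}$, which depends only on $\T_{j-1}$, so the constant may enter $\delta_j$.
\item Since $\phi_{j-1}$ is univalent, Lemma~\ref{lem:univ-approx} shows that $f_{j-1,j}\circ\theta_{j-1}\circ\phi_{j-1}^{-1}$ is univalent and uniformly close to the identity on a neighbourhood of $X_j$. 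Hence $\theta_j=\theta_{j,j}$ is univalent near $X_j$. This propagates hypothesis (a) and keeps each new diagonal map a small perturbation of its predecessor.
\item For $n<j$, condition~\eqref{eqn:fclose} together with Lemma~\ref{lem:approx-iterates} gives $\theta_{n,j}-\theta_{n,j-1}\to0$ uniformly near $X_n$, as $\delta_j\to0$. By Cauchy estimates this also holds in $\calC^{m}$ on a slightly smaller neighbourhood.
\item Write $\theta_{n,j}=(\theta_{n,j}\circ\theta_{n,j-1}^{-1})\circ\theta_{n,j-1}$. The first factor is a conformal map close to $\id$ near $X_{n,j-1}$, so Corollary~\ref{cor:Cmextension} extends it. Setting $\Theta_{n,j}$ to be the extension composed with $\Theta_{n,j-1}$, we choose $\delta_j$ to make $\|\Theta_{n,j}-\Theta_{n,j-1}\|_{\calC^{m_{n,j}}}<\eps_n2^{-j}$.
\item The $\Theta_{n,j}$ converge to $\Theta_n$, and $f_{n,j}\to f_n$ as before. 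Passing to the limit in the conjugacy relation gives conclusion (a), and $\Theta_0=\id$.
\item The statement of Theorem~\ref{thm:triangleaddendum} goes through verbatim, defining $\hat\delta_j$ via Lemma~\ref{lem:preimages-points} applied to $F_{n,j}$ on $\Theta_{n,j}(K_n)$.
\end{enumerate}

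Two further points need care. First, critical points: the $\phi_n$ are univalent, so the $C_n$ carry only fixed-value constraints, and $\theta_{n,j}=\id$ on $C_n$ follows because each $f_{m,j}$ agrees with $\phi_m$ on $C_m$ and $\phi_m(C_m)\subset C_{m+1}$. The Cauchy--Riemann property on $X_n$ holds because each $\Theta_{n,j}$ is conformal near $X_n$.

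The main obstacle I expect is making all choices depend only on $\T_{j-1}$ and $U_{j-1}$, while handling the domain bookkeeping. One needs nested neighbourhoods $V_{n,j}$ on which all $\theta_{n,j'}$ for $j'\ge j$ remain defined and univalent; their shrinking must be summable and controlled. Otherwise the limit $\Theta_n$ is not conformal on a full neighbourhood, or the closeness to $\id$ is lost.

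I would handle this by fixing, at the step where $n$ first appears, a neighbourhood $V_n\Subset U_n$ of $X_n$ depending on $\T_n$, and then requiring all later $\delta_j$ to keep $\theta_{n,j}$ within a small distance of $\theta_{n,n}$ on $V_n$. The distance would be a $2^{-j}$-fraction of $\dist(\theta_{n,n}(X_n),\partial\theta_{n,n}(V_n))$. This preserves univalence on a fixed smaller neighbourhood via Hurwitz's theorem, in the same spirit as Lemma~\ref{lem:theta-n}.
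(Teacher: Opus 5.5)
Your construction is the paper's. The paper sets $\Theta_{0,j}=\theta_{0,j}=\id$ and defines the one-step corrections recursively by $\theta_{n,j}\defeq f_{n-1,j+1}\circ\theta_{n-1,j}\circ f_{n-1,j}^{-1}$. It then puts $\Theta_{n,j+1}\defeq\tilde\theta_{n,j}\circ\Theta_{n,j}$ and obtains $\Theta_{j+1,j+1}$ from Corollary~\ref{cor:Cmextension} with $\Theta=\id$ and $\theta=\theta_{j+1}$. Unrolling gives $\theta_{n,j}=H_{n,j+1}\circ H_{n,j}^{-1}$ with $H_{n,j}\defeq f_{n-1,j}\circ\dots\circ f_{0,j}$. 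So on $X_n$ the paper's $\Theta_{n,j}$ is exactly your $H_{n,j}\circ\Psi_n^{-1}$, where $\Psi_n\defeq\phi_{n-1}\circ\dots\circ\phi_0$, and your step~4 is the same factorisation.

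The step that fails is your justification of $\theta_{n,j}=\id$ on $C_n$. Since $\theta_{n,j}(c)=H_{n,j}(\Psi_n^{-1}(c))$, the conditions $f_{m,j}=\phi_m$ on $C_m$ can only be used if the points $\Psi_m(\Psi_n^{-1}(c))$, $m<n$, lie in $C_m$. That is backward invariance, $\phi_m^{-1}(C_{m+1})\subset C_m$, whereas you are only given $\phi_m(C_m)\subset C_{m+1}$. So your argument covers only $c\in\Psi_n(C_0)$. For instance, if $C_0=\emptyset$ and $c\in C_1$, then $\theta_{1,j}(c)=f_{0,j}(\phi_0^{-1}(c))$ is not pinned down by anything. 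Nor can this be fixed at the extension stage, since $\Theta_0=\id$ and conclusion~(a) force $\Theta_n=f_{n-1}\circ\dots\circ f_0\circ\Psi_n^{-1}$ on $X_n$. (The paper's sketch is silent on this point.)

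One way to close the gap is as follows. Whenever $\theta_{j-1}(c)\neq c$ for some $c\in C_{j-1}$, choose $\delta_j(\T_{j-1},U_{j-1})<\lvert\phi_{j-1}(c)-\phi_{j-1}(\theta_{j-1}^{-1}(c))\rvert$, which is positive by injectivity of $\phi_{j-1}$. Then hypothesis~(c) at $c$ contradicts the triangle condition $f_{j-1,j}(c)=\phi_{j-1}(c)$, so every admissible triangle has $\theta_j|_{C_j}=\id$ for all $j$. In the application this must be built in by additionally interpolating at the finitely many points $\Psi_m(\Psi_j^{-1}(C_j))$, $m<j$. Now let $n<j$ and $c\in C_n$. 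On a neighbourhood of $\theta_{n,j}(X_n)$ that contains $c$, the map $f_{j-1,j}\circ\dots\circ f_{n,j}$ equals $\theta_j\circ\phi_{j-1}\circ\dots\circ\phi_n\circ\theta_{n,j}^{-1}$, so it is injective there. It sends $c$ to $\phi_{j-1}\circ\dots\circ\phi_n(c)$ by forward invariance and interpolation. It sends $\theta_{n,j}(c)$ to the same point by the conjugacy and $\theta_j|_{C_j}=\id$. Hence $\theta_{n,j}(c)=c$.

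A smaller point of the same kind: step~6 is not literally verbatim. Because $\Theta_{j,j}$ now extends $\theta_j\neq\id$, the composition $f_{j-1,j}\circ\dots\circ f_{n,j}$ maps $\Theta_{n,j}(K_n)$ into $\theta_j(K_j)$ rather than into $K_j$. The choice of $Q_j$ or of $\hat\delta_j$ therefore has to absorb the displacement of $\theta_j$.
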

\begin{proof}The proof proceeds as 
 for Theorems~\ref{thm:triangle1} and~\ref{thm:triangleaddendum}, except that 
 the maps $\Theta_{n,j}$ and $\theta_{n,j}$ that appear in the proofs are
 constructed slightly differently, as follows. (See Figure~\ref{fig:univalent}.) 

 We set $\Theta_{0,j}\defeq \theta_{0,j}\defeq \id$
  for all $j\geq 0$. 
  %On the other hand, we no longer
 % have $\Theta_{j,j}=\id$ for $j\geq 1$;
 % instead, $\Theta_{j,j}$ agrees with the map
 % $\theta_j$ from the statement of the theorem
 % in a neighbourhood of $X_j$. 
 By Lemma~\ref{lem:univ-approx}, if $\delta_j$ is chosen sufficiently small at each step, the 
 assumption ensures that the maps
 $f_{n,j}$ are all  univalent on 
 an open set slightly smaller than $U_n(\T)$. Likewise, each map
 $\theta_n$ from the statement of the theorem is defined and univalent on 
 a neighbourhood of $X_n$. (Here we also use Lemma~\ref{lem:approx-iterates}.)
 For $1\leq n \leq j$, we inductively define $\theta_{n,j}$ (on a suitable neighbourhood 
 $V_{n,j}$ of $X_{n,j}$) by 
 $\theta_{n,j}\defeq f_{n-1,j+1}\circ \theta_{n-1,j}
   \circ f_{n-1,j}^{-1}$.  Clearly if $\delta_{j+1}$ is
   chosen sufficiently small, these maps
   are close to the identity, as is $\theta_{j+1}$. As in 
   the proof of Theorem~\ref{thm:triangle1}, we define 
   $\Theta_{n,j+1}\defeq \tilde{\theta}_{n,j}\circ \Theta_{n,j}$, where
   $\tilde{\theta_{n,j}}$ is obtained from $\theta_{n,j}$ via Corollary~\ref{cor:Cmextension}.
   Similarly, we obtain $\Theta_{j+1,j+1}$ by applying Corollary~\ref{cor:Cmextension} to
   $\Theta=\id$ and $\theta = \theta_{j+1}$.
\end{proof}

%Theorem \ref{thm:injectivetriangle} is the main ingredient in the proof of Theorem \ref{thm:injective}, which we sketch below.
\begin{proof}[Proof of Theorem \ref{thm:injective}]
The proof proceeds analogously to the proof of Theorems~\ref{thm:Jordan-continua merged} and~\ref{thm:Jordan-continua merged-oscillating}, with the exception of the choice of $U_{j-1}$ and the map $g_j$ on $\overline{U_{j-1}}$, which are chosen precisely
so that we can apply Theorem~\ref{thm:injectivetriangle}.
\end{proof}

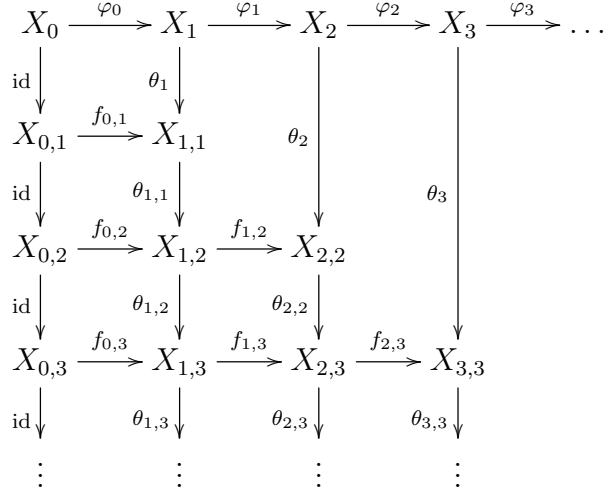
\begin{figure} 
\[
\xymatrix{
X_0 \ar[d]_{\textup{id}} \ar[r]^{\varphi_0} & X_1 \ar[d]_{\theta_1} \ar[r]^{\varphi_1} &  X_2 \ar[dd]_{\theta_2} \ar[r]^{\varphi_2} & X_3 \ar[ddd]_{\theta_3} \ar[r]^{\varphi_3} & \dots\\
X_{0,1} \ar[d]_{\textup{id}} \ar[r]^{f_{0,1}} & X_{1,1} \ar[d]_{\theta_{1,1}} &  &   \\
X_{0,2} \ar[d]_{\textup{id}} \ar[r]^{f_{0,2}} & X_{1,2} \ar[d]_{\theta_{1,2}} \ar[r]^{f_{1,2}} & X_{2,2} \ar[d]_{\theta_{2,2}} & \\
X_{0,3}  \ar[d]_{\textup{id}} \ar[r]^{f_{0,3}}  & X_{1,3}\ar[r]^{f_{1,3}}  \ar[d]_{\theta_{1,3}} & X_{2,3}\ar[r]^{f_{2,3}}  \ar[d]_{\theta_{2,3}} & X_{3,3}  \ar[d]_{\theta_{3,3}}\\
\vdots & \vdots & \vdots & \vdots
}
\]
\caption{Diagram for the proof of Theorem~\ref{thm:injective}. On $X_n$, the map
 $\Theta_{n,j}$ is the composition $\theta_{n,j-1}\circ \dots \circ \theta_{n,n}\circ \theta_n$. Note that $\Theta_{0,j}=\id$ for all $j\geq 0$.}\label{fig:univalent}
\end{figure}

\section{Realising internal dynamics by Jordan wandering domains}
\label{sec:topology-wandering-domains}
In this section, we prove Corollary~\ref{cor:Jordan}. We begin by stating 
 the two classification theorems, which describe the different types of simply connected wandering domains. The first classification (see \cite[Theorem A]{benini-fagella-evdoridou-rippon-stallard}) is in terms of hyperbolic distances between orbits of points.

\begin{thm}[First classification theorem] \label{thm:Theorem A introduction}
Let $U$ be a simply connected wandering domain   of a transcendental entire function $f$ and let $U_n$ be the Fatou component containing $f^n(U)$, for $n \ge 0$. Define the countable set of pairs
\[
E=\bigl\{(z,z')\in U\times U : f^k(z)=f^k(z') \text{\ for some $k\in\N$}\bigr\}.
\]
Then, exactly one of the following holds.
\begin{enumerate}[(1)]
\item \label{item:first-type1}$\dist_{U_n}(f^n(z), f^n(z'))\to c(z,z')= 0 $ for all $z,z'\in U$, and we say that $U$ is {\em (hyperbolically) contracting};
\item \label{item:first-type2}$\dist_{U_n}(f^n(z), f^n(z'))\to c(z,z') >0$ and $\dist_{U_n}(f^n(z), f^n(z')) \neq c(z,z')$
for all $(z,z')\in (U \times U) \setminus E$, $n \in \N$, and we say that $U$ is {\em  (hyperbolically) semi-contracting}; or
\item \label{item:first-type3}there exists $N>0$ such that for all $n\geq N$, $\dist_{U_n}(f^n(z), f^n(z')) = c(z,z') >0$ for all $(z,z') \in (U \times U) \setminus E$, and we say that $U$ is {\em (hyperbolically) eventually isometric}.
\end{enumerate}
\end{thm}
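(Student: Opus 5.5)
The plan is to reduce the statement to the Schwarz--Pick lemma applied to the sequence of holomorphic self-maps between the hyperbolic Riemann surfaces $U_n$. Each $U_n$ is simply connected (the preimage structure of wandering domains of entire functions gives this from simple connectivity of $U$; I will take it as known that the $U_n$ are hyperbolic simply connected domains). Fix Riemann maps $\psi_n\colon \DD\to U_n$ and set $b_n\defeq \psi_{n+1}^{-1}\circ f\circ\psi_n\colon\DD\to\DD$. By Schwarz--Pick, $d_n(z,z')\defeq\dist_{U_n}(f^n(z),f^n(z'))$ is non-increasing in $n$, so the limit $c(z,z')\geq 0$ exists for every pair. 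Moreover, equality $d_{n+1}=d_n$ for some pair with $f^n(z)\neq f^n(z')$ holds if and only if $b_n$ is a disc automorphism, i.e.\ $f\colon U_n\to U_{n+1}$ is univalent, and then equality holds for all pairs simultaneously.

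Split into cases by the set $S\defeq\{n: f|_{U_n}\text{ is not univalent}\}$. If $S$ is finite, let $N$ exceed $\max S$; then $f^{n}$ is an isometry from $U_N$ onto $U_n$ for $n\geq N$, so $d_n=d_N$ for $n\geq N$. Here $d_N(z,z')>0$ exactly when $f^N(z)\neq f^N(z')$, and since $f$ is injective beyond step $N$, this holds precisely when $(z,z')\notin E$. This gives case (3). If $S$ is infinite, then for $(z,z')\notin E$ every step $n\in S$ gives strict inequality $d_{n+1}<d_n$, since the points $f^n(z)\neq f^n(z')$ and $b_n$ is not an automorphism. Hence $d_n(z,z')> c(z,z')$ for all $n$, and the sequence is never eventually constant. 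It remains to show that either $c\equiv 0$ on all of $U\times U$ or $c>0$ on all of $(U\times U)\setminus E$, which gives (1) or (2).

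This dichotomy is the main obstacle. I would first fix a base point $z_0$ and show that if $c(z_0,z_1)=0$ for one pair with $(z_0,z_1)\notin E$, then $c\equiv0$. The plan is to normalise $\psi_n(0)=f^n(z_0)$, so that $b_n(0)=0$. By Schwarz, $B_n\defeq b_{n-1}\circ\dots\circ b_0$ then satisfies $|B_n(w)|$ non-increasing in $n$. The condition $c(z_0,z_1)=0$ means $B_n(w_1)\to0$ for some $w_1\neq0$ not eventually mapped to $0$. A normal-families argument shows every limit $G$ of $(B_n)$ is holomorphic with $|G|\leq|w|$ and $G(w_1)=0$, hence $G$ is not univalent. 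I then need to pass from ``not univalent'' to ``identically zero''. The key tool is the product formula: the derivative-type quantity $\prod_n|b_n'(0)|$ and the Schwarz-lemma defect. Since $|B_n(w)|/|w|$ is monotone in $n$, its limit is a function whose vanishing at one point $w_1$ forces, through Harnack-type estimates for $\log|B_n(w)/w|$ (which are negative harmonic functions), vanishing everywhere. Concretely, $h_n\defeq\log|B_n(w)/w|$ is harmonic away from zeros, non-increasing in $n$, and $\leq0$. If $h_n(w_1)\to-\infty$, then Harnack's principle for decreasing sequences gives $h_n\to-\infty$ locally uniformly off the zero sets. This yields $B_n\to0$ locally uniformly, and hence $c\equiv0$. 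Handling the zeros of $B_n$, which are the points of $E$, requires a small care: I would apply Harnack to the subharmonic functions on a small circle avoiding them, or use $\log|B_n(w)/w|$ after dividing out finitely many zeros, which is possible since the Green-function representation controls these. Conversely, if $c(z_0,z_1)>0$ for every non-$E$ pair, case (2) holds. Exclusivity of the three cases is immediate from the description above.
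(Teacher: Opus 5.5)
The paper does not prove this statement; it quotes it from Benini et al.\ (their Theorem~A), so your argument has to stand on its own. Most of your reduction is fine. Schwarz--Pick gives monotonicity of $d_n$, and the equality case forces $b_n$ to be an automorphism. (To identify this with univalence of $f|_{U_n}$ you need $f(U_n)=U_{n+1}$. This holds because $U_{n+1}\setminus f(U_n)$ has at most one point while $f(U_n)$ is simply connected; alternatively, define $S$ directly via $b_n$.) Splitting according to whether infinitely many $b_n$ fail to be automorphisms correctly yields case~(3) and the strict inequalities $d_n>c$ in case~(2). The gap is exactly at the step you call the main obstacle, and it is not a matter of ``small care''.

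The functions $h_n=\log\lvert B_n(w)/w\rvert$ are not harmonic. They have logarithmic poles at every zero of $B_n(w)/w$, that is, at the points $\psi_0^{-1}(z)$ with $f^n(z)=f^n(z_0)$, and also at $0$ if the orbit of $z_0$ meets a critical point. This set grows with $n$, and nothing prevents it from accumulating at $w_1$ or being dense in $\DD$. So there is no fixed domain on which Harnack's principle applies. A circle avoiding the zeros does not help, and neither does dividing out finitely many zeros, since infinitely many may contribute.

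In fact, when every $b_k$ is a finite Blaschke product, $-h_n$ is \emph{exactly} the sum of the Green's functions $G_{\DD}(w,a)$ over the zeros $a$, with no harmonic part at all. Then $\lvert B_n(w_1)\rvert\to0$ may be caused entirely by zeros of $b_k$ lying near $B_k(w_1)$. General non-negative superharmonic functions satisfy no Harnack inequality: think of $\eps\,G_{\DD}(\cdot,a)$ with $a$ very close to a point $y$. So you must use both that zero masses are integers and that $(z_0,z_1)\notin E$.

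One way to close the gap is as follows. Write $-h_n(w)=\sum_{k<n}v_k(B_k(w))$ with $v_k(u)=-\log\lvert b_k(u)/u\rvert\geq0$. Each $v_k$ is a sum of Green's functions at the zeros of $b_k(u)/u$, counted with multiplicity, plus a non-negative harmonic function. Harnack applies to the harmonic part. For the zeros, use the bound $G_{\DD}(x,a)\geq c(R)\min\{1,G_{\DD}(y,a)\}$ for $\dist_{\DD}(x,y)\leq R$. Together these give $v_k(x)\geq c(R)\min\{1,v_k(y)\}$: if some single zero $a$ has $G_{\DD}(y,a)\geq1$, that term alone gives $v_k(x)\geq c(R)$, and this is where integrality is used.

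Take $x=B_k(w)$ and $y=B_k(w_1)$; their hyperbolic distance is at most $\dist_{\DD}(w,w_1)$. Because $(z_0,z_1)\notin E$, every $v_k(B_k(w_1))$ is finite, while their sum diverges. Hence $\sum_k\min\{1,v_k(B_k(w_1))\}=\infty$, so $\sum_k v_k(B_k(w))=\infty$ and $B_n(w)\to0$ for every $w$. This gives $c\equiv0$ by the triangle inequality. With this, or an equivalent quantitative estimate, in place of the Harnack-principle step, the rest of your outline goes through.
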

The second classification (see \cite[Theorem C]{benini-fagella-evdoridou-rippon-stallard}) is in terms of convergence to the boundary (in Euclidean distance).

\begin{thm}[Second classification theorem]\label{thm:classification2}
Let $U$ be a simply connected wandering domain of a transcendental entire function~$f$ and let $U_n$ be the Fatou component containing $f^n(U)$, for $n \ge 0$. Then exactly one of the following holds:
\begin{enumerate}[(a)]
\item \label{item:second-typeA}
  $\liminf_{n\to\infty} \operatorname{dist}(f^{n}(z),\partial U_{n})>0$  for all $z\in U$,
    that is, all orbits stay away from the boundary;
\item \label{item:second-typeB}there exists a subsequence $n_k\to \infty$ for which $\operatorname{dist}(f^{n_k}(z),\partial U_{n_k})\to 0$ for all $z\in U$, while for a different subsequence $m_k\to\infty$ we have that
    \[\liminf_{k \to \infty} \operatorname{dist}(f^{m_k}(z),\partial U_{m_k})>0, \quad\text{for }z\in U;\]
    %that is, the orbits have a bungee behaviour towards the boundary.
\item \label{item:second-typeC}$\operatorname{dist}(f^{n}(z),\partial U_{n})\to 0$ for all $z\in U$, that is, all orbits converge to the boundary.
\end{enumerate}
\end{thm}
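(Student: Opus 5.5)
The plan is to reduce the trichotomy to a single orbit, using the fact that all orbits in $U$ stay at uniformly bounded hyperbolic distance from each other, together with a comparison between the hyperbolic metric and Euclidean distance to the boundary in simply connected domains.

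\emph{Step 1 (bounded hyperbolic separation).} For each $n$, the map $f^n\colon U\to U_n$ is holomorphic between hyperbolic domains. Each $U_n$ is a simply connected Fatou component whose complement contains $J(f)$, which has more than two points. By the Schwarz--Pick lemma,
\[ \dist_{U_n}(f^n(z),f^n(z'))\leq \dist_U(z,z') \qquad\text{for all } z,z'\in U,\ n\geq 0. \]

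\emph{Step 2 (comparison with Euclidean boundary distance).} Let $V\subsetneq\C$ be simply connected, and write $d_V(w)\defeq\dist(w,\partial V)$. Normalise the hyperbolic density on $\DD$ as $2/(1-\lvert z\rvert^2)$. The Koebe $1/4$-theorem then gives $\rho_V(w)\geq 1/(2d_V(w))$. Since $d_V$ is $1$-Lipschitz in the Euclidean metric, along any smooth curve $\gamma$ in $V$ we have
\[ \lvert \mathrm{d}\log d_V(\gamma)\rvert\leq \frac{\lvert \mathrm{d}\gamma\rvert}{d_V(\gamma)}\leq 2\rho_V(\gamma)\lvert \mathrm{d}\gamma\rvert. \]
Integrating along a hyperbolic geodesic shows that $\log d_V$ is $2$-Lipschitz with respect to $\dist_V$. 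Combining this with Step 1 yields, for all $z,z'\in U$ and all $n$,
\[ e^{-2\dist_U(z,z')}\leq \frac{\dist(f^n(z),\partial U_n)}{\dist(f^n(z'),\partial U_n)}\leq e^{2\dist_U(z,z')}. \]
I expect this step to be the main technical point; everything else is bookkeeping. The essential issue is the correct use of Koebe, since simple connectivity is exactly what gives the lower bound on $\rho_V$.

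\emph{Step 3 (trichotomy).} Fix $z_0\in U$ and set $a_n\defeq\dist(f^n(z_0),\partial U_n)\in(0,\infty)$. Exactly one of three cases occurs:
\begin{itemize}
\item[(i)] $\liminf a_n>0$;
\item[(ii)] $a_n\to0$;
\item[(iii)] $\liminf a_n=0<\limsup a_n$.
\end{itemize}
In case (iii), choose subsequences $n_k$ with $a_{n_k}\to 0$ and $m_k$ with $\inf_k a_{m_k}>0$. By Step 2, for every fixed $z\in U$ the sequence $\dist(f^n(z),\partial U_n)$ differs from $a_n$ by a factor bounded above and below, with the bound depending only on $z$. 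Hence the properties $\liminf>0$, $\to0$, and ``$\to0$ along $n_k$ and bounded below along $m_k$'' transfer from $z_0$ to every $z\in U$, with the same subsequences. This gives alternatives \ref{item:second-typeA}, \ref{item:second-typeC} and \ref{item:second-typeB} respectively. They are mutually exclusive because they are mutually exclusive already for the single point $z_0$.
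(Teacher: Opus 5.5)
Your proof is correct. The paper does not prove this statement but quotes it from \cite[Theorem~C]{benini-fagella-evdoridou-rippon-stallard}, and your argument is essentially the standard one from that source: Schwarz--Pick together with the Koebe bound $\rho_V\geq 1/(2\dist(\cdot,\partial V))$, which makes $\log\dist(\cdot,\partial V)$ $2$-Lipschitz with respect to $\dist_V$ on a simply connected $V$. The one point you assert without justification is that every $U_n$ is simply connected, which the Koebe bound genuinely needs; for transcendental entire $f$ this follows from Baker's results (a multiply connected $U_n$ would force every Fatou component to be bounded, so $f^n\colon U\to U_n$ would be a proper map from a simply connected domain onto a multiply connected one, which Riemann--Hurwitz rules out).
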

These two classifications give rise to nine types of simply connected wandering domains, only six of which can be realised as oscillating wandering domains, since we cannot have the case where orbits stay away from the boundary.

We will use the following corollary, which follows easily from 
Theorems \ref{thm:entire} and \ref{thm:injective} and Remark~\ref{rmk:Cm}.

\begin{cor}\label{cor:examples}
    For $n\geq 0$, let $D_n= \{z\colon|z-4n|<1\}$ and let $\varphi_n\colon\overline{D_n} \to \overline{D_{n+1}}$ be holomorphic in a neighbourhood of $\overline{D_n}$ with $\phi_n(\partial D_n)= \partial D_{n+1}$. Let also $(\varepsilon_n)_{n=0}^{\infty}$ be a sequence of
positive numbers such that $\varepsilon_n \to 0$ as $n \to \infty$. Then there exists a transcendental entire function $f$ with a sequence of escaping wandering domains $(U_n)_{n=0}^\infty$ and a diffeomorphism $\theta\colon \mathbb{C} \to \mathbb{C}$ such that, for all $n\geq 0$,
\begin{enumerate}[(a)]
\item $\theta(\overline{D_n}) = \overline{U_n}$;% for all $n\geq 0$;
\item $f \circ \theta = \theta \circ \varphi_n$ on $\overline{D_n}$;% for all $n\geq 0$;
\item $\|\theta-\id\|_{\calC^2} \leq \varepsilon_n$ on $\overline{D_n}$;% for all $n\geq 0$;
\item $\theta$ is conformal on $D_n$;% \dmp{for all $n\geq 0$}; 
\item $f^n(0)=\Phi_n(0)$ and $f^n(1) = \Phi_n(1)$, where $\Phi_n= \phi_{n-1} \circ \cdots \circ \phi_0$;%, \dmp{for all $n\geq 0$};
\item $\Deriv \theta(\Phi_n(1)) = \Deriv \id(\Phi_n(1))$.% \dmp{for all $n\geq 0$}.
\end{enumerate}
If, furthermore, the maps $(\varphi_n)_{n=0}^\infty$ are univalent, then $U_0=\mathbb{D}$.
\end{cor}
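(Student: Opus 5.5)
We apply Theorem~\ref{thm:Jordan-continua merged} together with Theorem~\ref{thm:entire} and Remark~\ref{rmk:Cm}. Set $X_n\defeq\overline{D_n}$. These are pairwise disjoint closed discs with $\inf\{|z|\colon z\in X_n\}=4n-1\to\infty$. Define $\phi$ on a neighbourhood of $X=\bigcup_n X_n$ by $\phi\defeq\phi_n$ near $X_n$; this is consistent if the neighbourhoods are chosen disjoint. The hypotheses $\phi(X_n)\subset X_{n+1}$ and $\phi(\partial X_n)\subset\partial X_{n+1}$ are given. Since $\C\setminus X$ is connected, Theorem~\ref{thm:entire} makes $f$ entire.

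For the marked sets we take
\[ C_n\defeq\{\Phi_n(0),\Phi_n(1)\}\cup\bigl(\CP(\phi_n)\cap X_n\bigr)\cup\phi_{n-1}(C_{n-1}), \]
defined recursively. Each $C_n$ is finite because $\phi_n$ has only finitely many critical points on the compact disc $X_n$ (we shrink the neighbourhood so that $\phi_n$ is nonconstant there; if some $\phi_n$ is constant, the boundary condition already fails). We take $M_n=2$ and use $\eps_n$ as given. Theorem~\ref{thm:Jordan-continua merged} and Remark~\ref{rmk:Cm} then give an entire $f$ and a $\calC^\infty$ diffeomorphism $\theta$ with the following properties:
\begin{itemize}
\item $f\circ\theta=\theta\circ\phi$ on $X$, which is (b);
\item $\|\theta-\id\|_{\calC^2}\le\eps_n$ near $X_n$, which is (c);
\item $\theta$ is conformal on $D_n$, which is (d);
\item $\theta$ agrees with $\id$ to order $2$ at points of $C_n$, which gives (f) at $\Phi_n(1)$;
\item $\theta$ fixes $\Phi_n(0)$ and $\Phi_n(1)$.
\end{itemize}
For (e), iterate (b): $f^n(\theta(z))=\theta(\Phi_n(z))$. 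Since $\theta(0)=0$, $\theta(1)=1$ and $\theta$ fixes $\Phi_n(0)$ and $\Phi_n(1)$, this gives $f^n(0)=\Phi_n(0)$ and $f^n(1)=\Phi_n(1)$.

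It remains to show that $U_n\defeq\theta(D_n)$ is a Fatou component with $\theta(\overline{D_n})=\overline{U_n}$. The set identity holds because $\theta$ is a homeomorphism. By Theorem~\ref{thm:Jordan-continua merged}, $\theta(X_n)$ is a wandering compact set lying in $I(f)$ with $\partial\theta(X_n)\subset J(f)$. The interior $\theta(D_n)$ is a Jordan domain whose boundary lies in $J(f)$. On it the iterates $f^k=\theta\circ\Phi_k\circ\theta^{-1}$ take values in the discs $\theta(X_{n+k})$, which tend to $\infty$, so the iterates converge locally uniformly to $\infty$ and $\theta(D_n)\subset F(f)$. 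A connected subset of the Fatou set bounded by Julia points is a full Fatou component, so $U_n=\theta(D_n)$ is an escaping wandering domain.

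For the univalent case we apply Theorem~\ref{thm:injective}. This requires $\phi_n(X_n)=X_{n+1}$. Since $\phi_n$ maps $\overline{D_n}$ into $\overline{D_{n+1}}$ and $\partial D_n$ onto $\partial D_{n+1}$, the maximum modulus and argument principles give $\phi_n(\overline{D_n})=\overline{D_{n+1}}$. Univalence on a neighbourhood of $\overline{D_n}$ follows by shrinking the neighbourhood, since a map that is univalent on $\overline{D_n}$ is univalent on some neighbourhood of it. Theorem~\ref{thm:injective} then gives $\theta=\id$ on $X_0$, so $U_0=\mathbb{D}$.

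\textbf{Expected obstacle.} The main points needing care are compatibility details rather than any new estimate. One is ensuring that the $\calC^2$-closeness in Remark~\ref{rmk:Cm} holds on all of $\overline{D_n}$. The other is that the orbit points $\Phi_n(0)$ and $\Phi_n(1)$ can be included in $C_n$ in a way consistent with $\phi(C_n)\subset C_{n+1}$; the recursive definition of $C_n$ above handles this.
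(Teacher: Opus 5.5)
Your proposal is correct and follows the route the paper indicates; the paper gives no separate proof, only the remark that the corollary follows from Theorems~\ref{thm:entire} and~\ref{thm:injective} and Remark~\ref{rmk:Cm}. You take $X_n=\overline{D_n}$, put the orbits of $0$ and $1$ into forward-invariant sets $C_n$ and set $M_n=2$, and your direct check that $\theta(D_n)$ is a whole Fatou component fills in a step the paper leaves implicit. One justification needs fixing, though: it is not true in general that a map univalent on $\overline{D_n}$ is univalent on a neighbourhood of it (e.g.\ $z\mapsto z^2$ is injective on $\overline{D(i,1)}$ but has a critical point at $0$); here the conclusion still holds because a univalent $\phi_n$ maps $D_n$ onto $D_{n+1}$, so by the identity theorem it is a M\"obius map near $\overline{D_n}$ and hence univalent on a neighbourhood of $\overline{D_n}$.
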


\begin{proof}[Proof of Corollary~\ref{cor:Jordan}]

Let $(D_n)_{n=0}^\infty$ be as in Corollary \ref{cor:examples} and for $n\geq 0$, take $\phi_{n} = b_{n+1}(z-4n)+4(n+1)$, where is a finite Blaschke product.

We first treat the case of escaping wandering domains. Take $(b_n)_{n=0}^\infty$ to be as in 
one of the nine examples provided in \cite[Section 6]{benini-fagella-evdoridou-rippon-stallard}. Then we apply Corollary~\ref{cor:examples} %\dmp{Theorem \ref{thm:entire}} 
to obtain a transcendental entire function $f$ and a diffeomorphism $\theta$. Now $f$ has an orbit of escaping wandering domains $(U_n)_{n=0}^\infty$ such that $U_n=\theta(D_n)$ for $n\geq 0$. 
For all $z\in D_0=\DD$ and $n\geq 0$,
\begin{align*} &\lvert \dist(f^n(\theta(z)),\partial U_n) - \dist(\Phi_n(z),\partial D_n)
 \rvert   \\ & 
   \hspace{80pt}=\lvert \dist(\theta(\phi^n(z)),\partial \theta(D_n)) - \dist(\Phi_n(z), \partial D_n)
      \rvert \leq 2\eps_n,\end{align*}
which tends to zero as $n\to\infty$.
Also, $\theta$ is conformal on each $D_n$, and hence preserves hyperbolic distances. We conclude that $(f^n)_{n=1}^{\infty}$ and $(\Phi_n)_{n=1}^{\infty}$ have the same internal dynamics on $U_0$ and $\DD$ respectively. Now each of the nine choices of the
maps $(b_n)_{n=0}^\infty$ in \cite[Section 6]{benini-fagella-evdoridou-rippon-stallard} realises
one of the nine types of internal dynamics, and hence the conclusion follows.

For the case of oscillating wandering domains, we use Theorem~\ref{thm:entire} and Remark \ref{rmk:oscillating-shrinking}. Let $(b_n)_{n=0}^\infty$ be defined as in \cite[Section~5]{evdoridou-rippon-stallard}, $\phi|_{\overline{D_n}}=\phi_n$ and $\varepsilon_n \to 0$ as $n \to \infty$. We apply Theorem~\ref{thm:entire} to obtain a transcendental entire function $f$ with an orbit of oscillating wandering domains $(U_n)_{n=0}^\infty$ such that the conclusions of Theorem \ref{thm:Jordan-continua merged-oscillating} hold. 
As above, for all $z\in D_0=\DD$ and $n\geq 0$ we have
\[\lvert \dist(f^{k_n+\cdots+k_1}(\theta(z)),\partial U_{k_n}) - \dist(\Phi_n(z),\partial D_n)
 \rvert \leq 2\eps_n,\]
which tends to zero as $n\to\infty$.  It follows by Remark~\ref{rmk:oscillating-shrinking} that when $\Phi^n(0)=4n$ then $U_n$ will be of type \ref{item:second-typeB} from Theorem~\ref{thm:classification2} and when $\Phi_n(0) \to 4n+1$ then $(U_n)_{n=0}^\infty$ will be of type \ref{item:second-typeC} from Theorem~\ref{thm:classification2}.

Also, for all $z, w \in D_0=\DD$ and $n\geq 0$, \[\dist_{U_{k_n}}(f^{k_n+ \cdots +k_1}(\theta(z)), f^{k_n+\cdots+k_1}(\theta(w)))=\dist_{D_n}(\Phi_n(z),\Phi_n(w)).\] Hence the internal dynamics with respect to hyperbolic distance is preserved. This concludes the proof.
\end{proof}

\section{Wandering domains with interesting boundary dynamics} \label{sec:examples}

As mentioned in the introduction, in \cite{benini-fagella-evdoridou-rippon-stallard2} the authors
 construct a number of interesting examples of boundary behaviour for non-autonomous
 sequences of maps on simply connected domains, and ask whether these can also be
 realised by wandering domains of entire functions. We show that this is indeed the case
 for all of the examples considered. In most cases (namely in 
 \cite[Theorem~D, Example 7.6 and Examples 8.1--8.3]{benini-fagella-evdoridou-rippon-stallard2}),
 the non-autonomous model consists of Blaschke products of the unit disc, and hence
 the realisation is a simple application of Corollary~\ref{cor:examples}.  On the other
 hand, \cite[Example~4.6]{benini-fagella-evdoridou-rippon-stallard2} involves model
 maps that do not extend holomorphically beyond the boundary of their domains, and
 hence we must adjust the construction somewhat (see Example~\ref{ex:measurezero} below).

We will use the following definition (see \cite[Definition~1.3]{benini-fagella-evdoridou-rippon-stallard2}).
\begin{dfn}\label{DWset}
Let $F_n\colon U \to U_n, n\geq 1$ be a sequence of holomorphic maps between simply connected domains. Assume that each $F_n$ extends continuously to $\partial U$, and
such that interior orbits converge to the boundary (i.e., $\dist(F_n(z),\partial U)\to 0$ as $n\to\infty$ for all $z\in U$).

The {\it Denjoy--Wolff set} of $(F_n)_{n=1}^{\infty}$ consists of those points $\zeta \in \partial U$ such that, for all $z \in U$,
\[
|F_n(\zeta) - F_n(z)| \to 0\quad \textup{as } n \to \infty.
\]
\end{dfn}
\begin{rmk}
    In~\cite{benini-fagella-evdoridou-rippon-stallard2}, it is not assumed that
    the maps $F_n$ are defined on the boundary of $U$, in which case the definition
    is made using a radial extension (see \cite[Section 2.1]{benini-fagella-evdoridou-rippon-stallard2}). However, in this section we only consider maps that extend continuously
    to the boundary.
\end{rmk}

\begin{ex}\label{ex:emptyDW}
There exists a transcendental entire function $f$ with an orbit of bounded escaping wandering domains $(U_n)_{n=0}^{\infty}$ such that $\operatorname{dist}(f^n(z), \partial U_n) \to 0$ for all $z \in U_0$ but the Denjoy--Wolff set of $(f^n|_{U_0})_{n=1}^{\infty}$ is empty.
\end{ex}
\begin{proof}
In the proof of \cite[Theorem~D]{benini-fagella-evdoridou-rippon-stallard2},
 the authors construct 
 a sequence $(\mu_n)_{n=1}^{\infty}$ of M\"obius automorphisms of the unit disc such that the
 compositions $M_n= \mu_n \circ \cdots \circ \mu_1$ have the following properties. \begin{itemize}
\item[(a)] $M_n(z)\to 1$ for all $z\in \mathbb{D}$.
\item[(b)] There is an arc $S=(e^{-i\theta}, e^{i\theta})$, $\theta \in (0, \pi/4)$, around $1$ such that for all $\zeta \in \partial \mathbb{D}$, there are infinitely many $n$
  with $M_n(\zeta)\notin S$.
\end{itemize}

We now apply Corollary \ref{cor:examples} with $\phi_n(z)\defeq \mu_{n+1}(z-4n)+4(n+1)$
and any sequence $(\varepsilon_n)_{n=0}^\infty$ such that
$\eps_n\to 0$ as $n\to \infty$. Let $f$ and $\theta$ be the resulting  
 transcendental entire function and $\calC^{\infty}$ diffeomorphism. Set
 $U_n\defeq \theta(\DD+4n)$ for $n\geq 0$. Then $(U_n)_{n=0}^{\infty}$ is
 an orbit of simply connected wandering domains, with $U_0=\DD$, and, for all $z \in \mathbb{D}$, 
 \[ \lvert f^n(z) - 4n - M_n(z)\rvert = \lvert \theta(M_n(z)+4n)- (M_n(z)+4n)\rvert
        < \eps_n \to 0\quad \textup{as } n\to\infty. \]
 For $z\in U_0$, we have 
   \[ \dist(f^n(z),\partial U_n) \leq \lvert f^n(z)-f^n(1)\rvert 
       \leq \lvert M_n(z) - 1\rvert + 2\eps_n \to 0 \quad \textup{as } n\to\infty. \]
 On the other hand, suppose that $z\in U_0$,
 $\zeta\in \partial\DD$ and $n$ is such that
 $M_n(\zeta)\notin S$. Then
   \begin{align*} \lvert f^n(\zeta) - f^n(z)\rvert &\geq 
       \lvert M_n(\zeta) - M_n(z)\rvert - 2\eps_n \geq
       \lvert M_n(\zeta) - 1\rvert - \lvert M_n(z) - 1\rvert - 2\eps_n \\ &\geq
       \diam(S) - \lvert M_n(z) -1 \rvert - 2\eps_n \to \diam(S) > 0 \end{align*}
  as $n\to\infty$. By construction, there are infinitely many $n$ with this property,
   and hence $\limsup_{n\to\infty} \lvert f^n(\zeta) - f^n(z)\rvert > 0$ as desired.\end{proof}

The next example shows that the harmonic measure
of the Denjoy--Wolff set may take a value strictly between $0$ and $1$. (This is in contrast
to the autonomous case, i.e.\ the iteration of a single self-map of a simply connected domain \cite[Theorems 4.1 and 4.2]{doering-mane}; see also \cite[Theorem~1.2]{benini-fagella-evdoridou-rippon-stallard2}.)

\begin{ex}
There exists a transcendental entire function $f$ with an orbit of bounded escaping eventually isometric wandering domains $(U_n)_{n=0}^\infty$ such that $\operatorname{dist}(f^n(z), \partial U_n) \to 0$ as $n\to\infty$ for all $z \in U_0$ and the Denjoy--Wolff set of $(f^n|_{U_0})_{n=0}^\infty$
has positive but not full harmonic measure with respect to $U_0$.
\end{ex}
\begin{proof}
In \cite[Example~7.6]{benini-fagella-evdoridou-rippon-stallard2} the authors show that 
there exists a sequence $(\mu_n)_{n=1}^{\infty}$ of M\"obius automorphisms of the unit disc such that $\mu_n(1)=1$ for all $n\geq 0$ and the
 compositions $M_n= \mu_n \circ \cdots \circ \mu_1$ have the following properties:
\begin{enumerate}[\rm (a)]
\item $M_n(z)\to 1$ as $n\to\infty$, for all $z\in\DD$;
\item $M_n(\zeta)\to 1$ as $n\to\infty$, for all $\zeta=e^{i\theta}$, $|\theta|<\pi/2$; but
\item $M_n(\zeta) \not\to 1$ as $n\to\infty$, for all $\zeta=e^{i\theta}$, $\pi/2 \leq |\theta| \leq \pi$.
\end{enumerate}

As in the previous example, we apply Corollary~\ref{cor:examples} to obtain %take $X_n= \overline{D_n}= \{z: |z-4n|\leq 1\}$ and $\phi= T_{n+1} \circ \mu_n \circ T_n ^{-1}$, where $T_n(z)= z+4n$, $n \geq 0$. We apply Theorem \ref{thm:injective} to obtain 
a transcendental entire function $f$ with an orbit of simply connected wandering domains $(U_n)_{n=0}^{\infty}$ such that $U_0= \mathbb{D}$ and 
 $\lvert f^n(z)-4n-M_n(z)\rvert \leq \eps_n\to 0$ as $n\to \infty$ for all $z \in \overline{\mathbb{D}}$. It follows that
 $\lvert f^n(\zeta) - f^n(z)\rvert \to 0$ when 
 $z\in \DD$ and $\zeta=e^{i\theta}$, $|\theta|<\pi/2$, while
 $\lvert f^n(\zeta)-f^n(z) \rvert \not\to 0$ when $\zeta=e^{i\theta}$, $\pi/2\leq |\theta|\leq \pi$.\end{proof}

The final two examples concern a relation between the rate of convergence to the
boundary for interior points and the size of the Denjoy--Wolff set. By~\cite[Theorem~C]{benini-fagella-evdoridou-rippon-stallard2}, if $(U_n)_{n=0}^{\infty}$ is an orbit of
bounded simply connected wandering domains of a transcendental entire function $f$,
and if 
 \begin{equation}\label{eqn:convergentsum} \sum_{n=0}^{\infty} \dist(f^n(z_0),\partial U)^{1/2} < \infty,\end{equation}
 then almost all points in $\partial U_0$ (with respect to harmonic measure)
 belong to the Denjoy--Wolff set. The next example shows that there is no rate of
 convergence to the boundary that ensures that the Denjoy--Wolff set has zero measure.
(Again, this is in contrast to the autonomous case \cite{doering-mane,benini-fagella-evdoridou-rippon-stallard2}.)
 
\begin{ex}
Let $(\delta_n)_{n=1}^{\infty}$ be a sequence of positive numbers with $\delta_n\leq 1$ for $n\geq 1$ and
$\delta_n\to 0$ as $n\to\infty$. Then there
exists a transcendental entire function $f$ such that $U_0=\DD$ is 
a bounded escaping and eventually isometric wandering domain of $f$ whose
orbit $(U_n)_{n=0}^{\infty}$ has the following properties.
\begin{itemize}
\item $\dist(f^n(0),\partial U_n) = \delta_n$ for all $n\geq 1$.
\item Every point $\zeta\in\partial \DD\setminus \{-1\}$ is in the Denjoy--Wolff set of
     $(f^n|_{\DD})_{n=1}^{\infty}$. 
\end{itemize}
\end{ex}
\begin{proof}
Set
$a_0\defeq 0$ and $a_n\defeq 1-\delta_n$; then $a_n\to 1$ as $n\to\infty$. In \cite[Example 8.1]{benini-fagella-evdoridou-rippon-stallard2} the authors construct the following example.

For $n\geq 1$, let $\mu_n$ be the M\"obius map such that $\mu_n(\pm 1)=\pm 1$ and $\mu_n(a_{n-1})=a_n$, and define $M_n=\mu_n\circ\cdots\circ \mu_1$. Then
\[M_n(z)=\frac{z+a_n}{1+a_nz}, \quad \text{for } n \geq 1.\]
Then every  point $\zeta \in \partial \mathbb{D}\setminus\{-1\}$ satisfies that $\lim_{n\to\infty} M_n(\zeta)=1$.
%%, regardless of whether $\sum_{n\geq 0} (1-a_n)$ converges or not.

Apply Corollary~\ref{cor:examples}, as in the previous two examples.
We obtain a transcendental entire function $f$ and an orbit $(U_n)_{n=0}^{\infty}$ of simply connected wandering domains 
with $U_0=\DD$. We have $\lvert f^n(z)-(1+4n)\rvert \to 0$ as $n\to\infty$ for all
$z\in \overline{\DD}\setminus\{-1\}$, and hence the Denjoy--Wolff set is
$\partial\DD\setminus\{-1\}$. Recall that the map $\theta$ 
from Corollary~\ref{cor:examples} agrees with the identity to first order at 
$\Phi_n(1)=1 + 4n$,
and is arbitrarily close to the identity in the $\calC^2$-norm. It follows easily that,
 if $\eps_n$ is chosen sufficiently small, $f^n(1)=1+4n$ is the closest point of
 $\partial D_n$ to $f^n(0)=\Phi_n(0)=1-\delta_n+4n$. 
% We now take $\sum_{n\geq 0} (1-a_n)= \infty$ and  apply Corollary~\ref{cor:examples} to obtain %consider $X_n= \overline{D_n}= \{z: |z-4n|\leq 1\}$ and $\phi= T_{n+1} \circ \mu_n \circ T_n ^{-1}$, where $T_n(z)= z+4n$, $n \geq 0$. We apply Theorem \ref{thm:injective} to obtain 
% a transcendental entire function $f$ with a sequence of simply connected wandering domains $U_n$ such that $U_0= \mathbb{D}$ and $U_n$ is asymptotically equal to $D_n$, such that $|f-\phi|<\delta_n$ on $\overline{D_n}$. It follows that all points in $U_0$ tend to a point $\zeta_n=f^n(1)$ of $\partial U_n$. Moreover, $\delta_n$ can be chosen so that $f^n(\zeta) \to \zeta_n$, for a long arc on $\partial U_0$, deducing that $\lim_{n \to \infty} f^n(z)=\zeta_0$ for $z \in U_n \cap  D(\zeta_0, \rho)$, $<0<\rho<1$.
\end{proof}

\begin{rmk}
    As remarked after~\cite[Example~8.1]{benini-fagella-evdoridou-rippon-stallard2}, the
     same effect can be achieved by using a sequence of Blaschke products of higher degree,
     instead of M\"obius transformations. (Simply choose a Blaschke product $b_n$ that
     is very close to $\mu_n$ outside a small disc centred at $-1$.) Applying the
     same proof as above, we also obtain
     examples of semi-contracting wandering domains with arbitrarily 
     slow convergence to the boundary and such that the Denjoy--Wolff set has 
     full harmonic measure. (Note that we will not have $U_0=\DD$ 
     in this case, since the model map is no longer univalent.) We omit the details.
\end{rmk}

Our final example shows that the condition~\eqref{eqn:convergentsum} in
\cite[Theorem~C]{benini-fagella-evdoridou-rippon-stallard2} is
optimal, even for wandering domains of entire functions.

\begin{ex}\label{ex:measurezero}
There exists a transcendental entire function $f$ with an orbit of bounded, escaping 
and contracting wandering domains $(U_n)_{n=0}^{\infty}$ such that
\begin{enumerate}[(a)]
\item each $U_n$ is a Jordan domain with smooth boundary;
\item \label{item:convergestoboundary} for all $z\in U_0$, 
  $\dist(f^n(z),\partial U_n)\to 0$ as $n\to\infty$; %f^n(z)\to \zeta_0 \in \partial U$ as $n \to \infty$;\label{item:convergestoboundary}
\item \label{item:seriesconvergence}
  for all $z\in  U_0$, $\sum \operatorname{dist}(f^n(z), \partial U_n)^{\alpha}
\begin{cases}
< \infty,\quad \text{ for}\quad \alpha > 1/2,\\
= \infty,\quad \text{ for}\quad \alpha = 1/2;
\end{cases}$
\item the Denjoy--Wolff set of $(f^n|_{U_0})_{n=1}^{\infty}$ has 
harmonic measure zero seen from $U_0$.
\end{enumerate}
\end{ex}
\begin{proof}
We follow the idea of \cite[Example~4.6]{benini-fagella-evdoridou-rippon-stallard2}, 
with some adjustments. For $n\geq 1$, let
\[
M_{n}(z)\defeq\frac{z+1- \frac1n}{1+\left(1-\frac1n\right)z}.
\]
Define $b_1(z)\defeq M_1(z)=z$, while for $n\geq 2$, we define the Blaschke product
\[
b_n(z)\defeq M_n\left((M_{n-1}^{-1}(z))^2\right),
\]
and
\begin{equation}\label{Gn-Mn}
B_n(z)\defeq b_n\circ \cdots \circ b_1 (z)
= M_n(z^{2^n})=\frac{z^{2^n}+1- \frac1n}{1+(1-\frac1n)z^{2^n}}.
\end{equation}
As shown in \cite[Example 8.3]{benini-fagella-evdoridou-rippon-stallard2}, for all $z\in \mathbb{D}$, $B_n(z)\to 1$ as $n\to \infty$ and 
$\{B_n(\zeta)\}_{n=1}^{\infty}$ is dense in $\partial\DD$ for almost every point of $\zeta\in\partial\mathbb{D}$.

Define
 \[ \psi\colon \DD\to \C; \quad z\mapsto (z-1)^2 \]
 and take $U\defeq  \psi (\mathbb{D})$. Then
$\psi\colon \DD\to U$ is a conformal isomorphism.  In~\cite[Example~4.6]{benini-fagella-evdoridou-rippon-stallard2}, the authors consider the self-maps of $U$ given by
$\psi\circ B_n\circ\psi^{-1}$. Since $\psi$ does not extend holomorphically to a neighbourhood of $0$, we instead use a sequence of smaller domains for our
model maps. For $n\geq 1$, define 
\[ \psi_n\colon \DD\to U; \quad z\mapsto \psi((1-\tfrac{1}{n+1})z)+ 8n\]
 and $X_n\defeq \overline{\psi_n(\DD)}$. Also set $X_0\defeq \overline{\DD}$ and 
  $\psi_0=\id$. Define
  \[ \phi(z) \defeq \psi_{n+1}\circ b_{n+1}\circ \psi_n^{-1}\]
  on $X_n$. Note that this means that
    \[ \phi^n(z) = \psi_{n}(B_n(z)) \]
    for all $z\in\DD$, and that $\phi$ extends analytically to a neighbourhood of 
   $X \defeq \bigcup_{j=0}^{\infty}X_j$ by definition.

Set 
$V_n\defeq \interior(X_n)$ for $n\geq 1$. We have 
    \begin{align*}
    \dist(\phi^n(0),\partial V_n)
       \leq \phi^n(0) - 8n &= \psi_n(B_n(0)) - 8n \\ &= \psi((1-\tfrac1n)(1-\tfrac{1}{n+1})) 
      = \bigl(-\tfrac1n - \tfrac{1}{n+1} + \tfrac1{n(n+1)}\bigr)^2
      \asymp \tfrac{1}{n^2}.\end{align*}
   From the same calculation, we see that $\psi'(B_n(0)) \asymp \tfrac{1}{n}$; from Koebe's
   theorem we conclude that
   $\dist(\phi^n(0),\partial V_n) \asymp 1/n^2$, and hence $\phi$ has the property stated 
   in~\ref{item:seriesconvergence}. Also recall that, for almost every point $\zeta\in\partial \DD$,
   there is a sequence $(n_k)_{k=0}^{\infty}$ with $n_k\to\infty$ as $k\to\infty$ such that
   $B_{n_k}(\zeta)\to -1$, and hence 
      \begin{equation}\label{eqn:notDW} \phi^{n_k}(\zeta) - 8n \to 4\end{equation} as $k\to\infty$.

Now apply Theorem \ref{thm:entire} with $C_n = \{\phi^n(0)\}$ and
  $\varepsilon_n = o(1/n^2)$ as $n \to \infty$.
We obtain a transcendental entire function $f$ with an orbit $(U_n)_{n=0}^{\infty}$ of simply connected escaping wandering domains  and a diffeomorphism $\theta$ such that 
  \[ \lvert f^n(\theta(z)) - \phi^n(z)\rvert =
   \lvert \theta(\phi^n(z)) - \phi^n(z)\rvert \leq \eps_n \]
   for all $z\in X_0$. We have 
   \[ \lvert \dist(f^n(\theta(0)),\partial U_n) - \dist(\phi^n(0),\partial V_n)\rvert \leq 
       2\eps_n = o\bigl(\tfrac{1}{n^2}\bigr). \]
    So $\dist(f^n(\theta(0)),\partial U_n) \asymp 1/n^2$, and~\ref{item:convergestoboundary} and~\ref{item:seriesconvergence}
    follow. It follows from~\ref{eqn:notDW} that $\theta(\zeta)$ is not in the Denjoy--Wolff
    set for almost every $\zeta\in\partial\DD$. Since $\theta\colon \DD\to U_0$
    is a conformal isomorphism, we conclude that the Denjoy--Wolff set has zero harmonic
    measure. Since $\psi$ is conformal on $\DD$ it follows from \cite[Lemma 8.6]{benini-fagella-evdoridou-rippon-stallard2} that each $U_n$ is a contracting wandering domain. Finally, each $U_n\defeq \theta(V_n)$ is a Jordan domain with smooth boundary.
    This completes the proof.
\end{proof}

We remark that~\cite[Section~8]{benini-fagella-evdoridou-rippon-stallard2} contains
one more family of examples (Example 8.3). This example primarily concerns the behaviour
of a sequence of Blaschke products on the unit disc, and it is not clear that
its realisation by wandering domains of transcendental entire functions (which is also
possible using our theorems, in analogy to the above) is of interest.

\bibliographystyle{amsalpha}

\bibliography{bibliography}

\end{document}